\documentclass[onecolumn]{article}
\usepackage[margin=1in]{geometry}
\usepackage[dvipdfmx]{graphicx,color}
\usepackage{amsthm}
\usepackage{bm}
\usepackage{mathtools}
\usepackage{tcolorbox}
\usepackage{subfig}
\usepackage{ascmac}
\usepackage{comment}
\usepackage{mathrsfs}
\usepackage{amscd}
\usepackage{url}
\usepackage[all]{xy}
\usepackage{bbm}
\usepackage{tikz-cd}
\usepackage{array}
\usepackage{amsmath}
\usepackage{cancel}
\usepackage{here}
\usepackage{listings}
\usepackage{float}
\usepackage{verbatim}
\usepackage{lipsum} 
\usepackage{multicol} 
\usepackage{xcolor}
\usepackage{colortbl}
\usepackage{longtable}
\usepackage{multirow}
\usepackage{amssymb}
\usepackage{booktabs}
\usepackage{pdflscape}
\usepackage{graphicx} 
\usepackage{empheq}
\usepackage{enumitem}

\numberwithin{equation}{section}

\usepackage[dvipdfmx]{graphicx}
\usetikzlibrary{calc}

\tcbuselibrary{breakable}

\makeatletter
\newcommand{\address}[1]{\gdef\@address{#1}}
\newcommand{\btri}{\mathrel{\blacktriangleright}}
\newcommand{\printaddress}{\@address}
\makeatother
\newtheoremstyle{mytheoremstyle}
  {10pt}
  {10pt}
  {\itshape}
  {}
  {\bfseries}
  {}
  {1em}
  {\thmname{#1}\thmnumber{ #2}\thmnote{ (#3)}}

\theoremstyle{mytheoremstyle}

\newtheorem{thm}{Theorem}[section]
\newtheorem{prop}[thm]{Proposition}
\newtheorem{defi}[thm]{Definition}
\newtheorem{fac}[thm]{Fact}

\newtheorem{remark}[thm]{Remark}
\newtheorem{lem}[thm]{Lemma}
\newtheorem{cor}[thm]{Corollary}

\makeatletter
\renewenvironment{proof}[1][\proofname]{%
  \par\normalfont 
  \topsep6pt\relax 
  \trivlist\item[\hskip\labelsep\itshape #1\@addpunct{.}]\ignorespaces
}{%
  \qed\endtrivlist\@endpefalse
}
\makeatother
\DeclareMathOperator{\PSL}{\mathrm{PSL}}
\DeclareMathOperator{\AGaL}{\mathrm{A\Gamma L}}
\DeclareMathOperator{\AGL}{\mathrm{AGL}}
\DeclareMathOperator{\GF}{\mathrm{GF}}

\DeclareMathOperator{\Sz}{\mathrm{Sz}}
\DeclareMathOperator{\PrN}{\mathrm{PrN}}
\DeclareMathOperator{\PH}{\mathrm{PH}}
\DeclareMathOperator{\NPr}{\mathrm{NPr}}
\newcommand{\id}{\mathrm{id}}

\makeatletter

\makeatother
\begin{document}
\title{Classification and lattice properties of pronormal subgroups in $\PSL(2,q)$, $J_{1}$, and $\Sz(q)$ for the specified values of $q$}

\author{
Yuto Nogata\\
\normalsize{Graduate School of Science and Technology, Hirosaki University}\\
\normalsize{3 Bunkyo-cho, Hirosaki, Aomori, 036-8560, Japan}\\
\normalsize{Email: h24ms113@hirosaki-u.ac.jp}
}
\date{Novenber 1, 2025}

\maketitle

\begin{center}
\textbf{Abstract}
\end{center}
\noindent
We complete the classification of pronormal subgroups in the projective special linear groups \(\PSL(2,q)\), the Suzuki groups of Lie type \(\Sz(q)\), and the first Janko group \(J_{1}\), for the same ranges of \(q\) as in \cite{ferrara2023-2,ferrara2024}. Building on those works, we settle the remaining cases under the same parameter conditions. For each of these finite simple groups, the family of pronormal subgroups is closed under joins but not under meets. If the meet operation is replaced by a suitable operation, the family becomes a lattice.

\begin{center}
\textbf{Main Theorem}
\end{center}

\noindent
\textbf{(I) \(\bm{\PSL(2,q)}\)}\quad
Assume \(q\) lies in the ranges specified in \cite{ferrara2023-2,ferrara2024}. Write \(q=p^{\,n}\) with \(p\) prime and \(n\ge 1\). Then the only non-pronormal subgroups are the elementary abelian \(p\)-subgroups \((\mathbb{Z}_{p})^{\,j}\) with \(1\le j<n\), together with the \(2\)-subgroups described in Corollary~\ref{cor:PSLの準正規部分群分類}.

\medskip
\noindent
\textbf{(II) \(\bm{J_{1}}\)}\quad
Every subgroup is pronormal, except for \(\mathbb{Z}_{2}\) and \((\mathbb{Z}_{2})^{\,2}\), as stated in Corollary~\ref{cor:J1の準正規部分群の分類}.

\medskip
\noindent
\textbf{(III) \(\bm{\Sz(q)}\)}\quad
Assume \(q=2^{\,2n+1}\) with \(2n+1\) prime, as in \cite{ferrara2024}. Every subgroup is pronormal except for \(2\)-subgroups. Among the \(2\)-subgroups, the only pronormal ones are \((\mathbb{Z}_{2})^{\,2n+1}\) and the Sylow \(2\)-subgroup, as in Corollary~\ref{cor:鈴木群の準正規部分群分類定理}.

\medskip
\noindent
In each case, the family of pronormal subgroups is closed under joins but not under meets. After replacing the meet by a suitable operation defined later, the family becomes a lattice, as summarized in Remark~\ref{rem:束まとめ}.

\begin{center}
\textbf{Keywords}
\end{center}
\begin{center}
Finite simple groups, Pronormal subgroups, Subgroup lattices,\\ First Janko group, Projective special linear groups, Suzuki groups of Lie type.
\end{center}

\section{Introduction}

Let \(G\) be a group. A subgroup \(H\subset G\) is \emph{pronormal} in \(G\) when \(H\) and \(H^{g}\) are conjugate inside \(\langle H,H^{g}\rangle\) for every \(g\in G\). P. Hall introduced this notion to extend the well-behaved conjugacy properties of normal subgroups and maximal subgroups to a broader class. Pronormality is now a central embedding property in finite group theory. The key advantage is local to global. It reduces questions about conjugacy in \(G\) to questions inside the smaller subgroup \(\langle H,H^{g}\rangle\). This makes pronormality an effective tool for the analysis of subgroup embeddings.

According to \cite{Vdovin2013}, normal subgroups and maximal subgroups of any group are pronormal. In finite groups, every Sylow \(p\)-subgroup is pronormal. In addition, \cite{revin2012,Vdovin2013} prove that Hall \(\pi\)-subgroups and Carter subgroups of solvable groups are pronormal. They also show that Hall \(\pi\)-subgroups of finite simple groups are pronormal.

In recent years, classifying pronormal subgroups in finite simple groups has become increasingly important. Works in this direction, including \cite{ferrara2023-2,ferrara2024}, classify groups in which every non-abelian subgroup is pronormal and groups in which every non-nilpotent subgroup is pronormal. Beyond finite simple groups, \cite{mitkari2023a} classifies pronormal subgroups of dihedral groups, and \cite{mitkari2024} classifies pronormal subgroups of dicyclic groups. However, the structure of pronormal subgroups for broader families of groups remains only partially classified. Classifying pronormal subgroups more generally is important for understanding group structure.

Similarly, recent works also address whether the family of pronormal subgroups forms a lattice. In \cite{mitkari2023b,mitkari2024}, four group families are studied. For dicyclic groups and for dihedral groups, the family forms a lattice. For the alternating groups and the symmetric groups, the family does not form a lattice. For finite simple groups, general results are not yet available.

According to \cite{ferrara2023-2}, the finite simple groups in which every non-abelian subgroup is pronormal are precisely \(J_{1}\) and \(\PSL(2,q)\) for the values of \(q\) specified there. According to \cite{ferrara2024}, the finite simple groups in which every non-nilpotent subgroup is pronormal are \(J_{1}\), \(\PSL(2,q)\) for the values of \(q\) specified there, and \(\Sz(q)\) with \(q=2^{\,2n+1}\) and \(2n+1\) prime. The conditions on \(q\) for \(\PSL(2,q)\) differ between \cite{ferrara2023-2} and \cite{ferrara2024}.

Motivated by these results, we determine pronormality in the remaining cases for \(\PSL(2,q)\) and \(\Sz(q)\), where \(q\) lies in the ranges specified in \cite{ferrara2023-2,ferrara2024}. We also settle the case of \(J_{1}\). In particular, we give a complete classification of abelian and nilpotent pronormal subgroups in \(J_{1}\), \(\PSL(2,q)\), and \(\Sz(q)\) under those parameter conditions. Building on this classification, we examine whether the family of all pronormal subgroups forms a lattice.

In \S2, we collect preliminaries and notation. We treat \(\PSL(2,q)\) in \S3, \(J_{1}\) in \S4, and \(\Sz(q)\) in \S5, where \(q\) is restricted as in \cite{ferrara2023-2,ferrara2024}.

\section{Notation and Preliminaries}

Throughout this paper, we consistently use the notation $A^b := b^{-1}Ab$ for conjugation.

\begin{defi}\label{defi:準正規部分群の定義}
A subgroup \(H\subset G\) is called \emph{pronormal} in \(G\) if
\[
\forall\,g\in G,\;\exists\,x\in\langle H,H^{g}\rangle
\quad\text{s.t.}\quad
H^{x}=H^{g}.
\]
\end{defi}

\begin{lem}[\cite{revin2012,Vdovin2013}]\label{lem:準正規部分群のクラス}
Let \(G\) be a group. Then normal subgroups and maximal subgroups of \(G\) are pronormal. If \(G\) is finite, then for every prime \(p\), each Sylow \(p\)-subgroup of \(G\) is pronormal. If \(G\) is finite solvable, then every Hall \(\pi\)-subgroup and every Carter subgroup of \(G\) is pronormal. If \(G\) is finite simple, then every Hall \(\pi\)-subgroup of \(G\) is pronormal.
\end{lem}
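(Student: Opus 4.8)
The plan is to run a single local-to-global template for each clause: to show a subgroup $H\le G$ is pronormal, fix $g\in G$, set $K:=\langle H,H^{g}\rangle$, and produce $x\in K$ with $H^{x}=H^{g}$. Each item then reduces to two checks, namely (a) that the defining property of $H$ is inherited by $K$ for both $H$ and $H^{g}$, and (b) that subgroups of $K$ enjoying that property are conjugate inside $K$. The normal and maximal cases I would dispatch first, since they need no finiteness hypothesis. If $H\trianglelefteq G$ then $H^{g}=H$ and $x=\id$ works. If $H$ is maximal, then $K\in\{H,G\}$ by maximality; in the case $K=G$ we may take $x=g$, and in the case $K=H$ we have $H^{g}\subseteq H\subsetneq G$ with $H^{g}$ again maximal, forcing $H^{g}=H$, so $x=\id$ works.

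Next I would treat the finite Sylow and solvable cases, which are the cleanest instances of the template. For a Sylow $p$-subgroup $H$ of a finite group $G$, the containment $H\le K$ gives $\abs{K}_{p}\ge\abs{H}=\abs{G}_{p}\ge\abs{K}_{p}$, so $H$ and $H^{g}$ are both Sylow $p$-subgroups of $K$, and Sylow's theorem supplies $x\in K$ with $H^{x}=H^{g}$. For a Hall $\pi$-subgroup $H$ of a finite solvable $G$ the identical argument with the $\pi$-part of the order in place of the $p$-part shows $H$ and $H^{g}$ are Hall $\pi$-subgroups of $K$; since $K$ is solvable (as a subgroup of a solvable group), P.\ Hall's conjugacy theorem yields the desired $x$. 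For a Carter subgroup $H$ (a self-normalizing nilpotent subgroup) of finite solvable $G$, nilpotency descends to $K$, and $N_{K}(H)=N_{G}(H)\cap K=H\cap K=H$ shows $H$, and likewise the conjugate $H^{g}$, is a Carter subgroup of the solvable group $K$; Carter's conjugacy theorem then provides $x\in K$.

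The genuinely hard clause is the last one, Hall $\pi$-subgroups of finite simple groups, and I expect it to be the main obstacle. Here the template breaks at step (b): Hall $\pi$-subgroups of an arbitrary finite group, in particular of the auxiliary subgroup $K$, need not be conjugate, so conjugacy inside $K$ cannot be invoked directly. There is no elementary route, and I would not attempt to reprove it; the assertion is exactly the theorem of Vdovin and Revin, whose proof proceeds group-by-group through the simple families together with a reduction theorem for Hall subgroups, and which rests on the classification of finite simple groups. Accordingly, for this final clause I would simply cite \cite{revin2012,Vdovin2013}, while presenting the preceding clauses via the uniform local-to-global argument sketched above.
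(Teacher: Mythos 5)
Your proposal is correct, but it does something the paper never attempts: the paper gives no proof of this lemma at all, simply citing \cite{revin2012,Vdovin2013} and treating every clause as imported background. Your local-to-global template is the standard and sound way to establish the elementary clauses: the normal case is trivial, your maximal-subgroup argument via $\langle H,H^{g}\rangle\in\{H,G\}$ is valid without any finiteness hypothesis (matching the lemma's generality), and the Sylow, Hall-solvable, and Carter clauses all reduce correctly to the respective conjugacy theorems of Sylow, P.~Hall, and Carter applied inside $K=\langle H,H^{g}\rangle$ --- the key inheritance checks ($|K|_{p}$-counting, solvability of subgroups, and $N_{K}(H)=N_{G}(H)\cap K=H$) are all accurate. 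You also correctly diagnose why the template collapses for the last clause: membership of $H$ and $H^{g}$ as Hall $\pi$-subgroups of $K$ buys nothing, since Hall subgroups of an arbitrary finite group need not be conjugate, and the Revin--Vdovin theorem genuinely rests on the classification of finite simple groups, so deferring to the citation there is the right call. What your approach buys is a self-contained justification of four of the five clauses and a precise identification of exactly where the deep, CFSG-dependent input enters; what the paper's approach buys is brevity, which is defensible since all clauses are indeed in the cited literature.
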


\begin{defi}
A group $G$ is called \emph{prohamiltonian} if every non-abelian subgroup of $G$ is pronormal in $G$.
\end{defi}

\begin{thm}[\cite{ferrara2023-2}]\label{thm:プロハミルトン群分類}
 Let $G$ be a non-abelian finite simple group. Then $G$ is prohamiltonian if and only if it is isomorphic with one of the following groups:
 
\begin{enumerate}
  \item[(1)] $\PSL(2,q)$, where $q$ satisfies one of the following properties:
  \begin{enumerate}
    \item $q=2^{n}$ and $n$ is prime,
    \item $q=3^{n}$ and $n$ is an odd prime,
    \item $q=p$ is a prime such that $q\not \equiv \pm 1 \pmod {8}$ and $q>17$,
    \item $q=7,17$
  \end{enumerate}

  \item[(2)] $J_{1}$.
\end{enumerate}

\end{thm}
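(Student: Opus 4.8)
The plan is to establish the two implications separately. The principal tools are Dickson's classification of the subgroups of $\PSL(2,q)$, the explicit maximal-subgroup and conjugacy data for $J_1$, and, for the converse, the classification of finite simple groups (CFSG) together with Lemma~\ref{lem:準正規部分群のクラス} (so that Sylow, maximal, normal, and Hall subgroups may be discarded immediately as automatically pronormal). For the ``if'' direction I would take each group on the list and verify that \emph{every} non-abelian subgroup is pronormal; for the ``only if'' direction I would show that a non-abelian finite simple group not on the list always contains a non-abelian subgroup violating Definition~\ref{defi:準正規部分群の定義}.

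For $\PSL(2,q)$ the forward direction rests on Dickson's list of subgroup types: cyclic groups, dihedral groups, the Borel-type Frobenius groups $(\mathbb{Z}_p)^{n}\rtimes \mathbb{Z}_{(q-1)/d}$ (with $d=\gcd(2,q-1)$) and their Frobenius subgroups $(\mathbb{Z}_p)^{j}\rtimes \mathbb{Z}_{m}$, the exceptional groups $A_4,S_4,A_5$, and the subfield subgroups $\PSL(2,p^{m})$ and $\mathrm{PGL}(2,p^{m})$ with $m\mid n$. The abelian types are irrelevant to prohamiltonianness, so I would treat the non-abelian families one at a time, computing $N_G(H)$ and the $G$-fusion of $H$, and checking that $H$ and $H^{g}$ are already conjugate inside $\langle H,H^{g}\rangle$. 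The maximal members (the full Borel subgroup and the maximal subfield subgroup) are pronormal by Lemma~\ref{lem:準正規部分群のクラス}. The essential point is that the arithmetic hypotheses on $q$ are exactly the conditions that eliminate the dangerous non-maximal families: requiring $n$ prime kills every proper intermediate subfield, leaving only the maximal subfield subgroup over the prime field; and the congruence $q\not\equiv\pm1\pmod 8$ is precisely the condition under which $S_4\not\le \PSL(2,q)$, removing the chain $A_4<S_4$ that otherwise produces a non-pronormal copy of $A_4$. The cases $q=7,17$ must be handled by hand, since there $S_4$ \emph{is} present yet the relevant copies of $A_4$ remain pronormal because of the small, rigid structure of these groups.

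For $J_1$ the forward direction is essentially a finite computation. Using the known conjugacy classes of maximal subgroups of $J_1$ and the non-abelian subgroups they contain, I would verify pronormality class by class; the Sylow and maximal subgroups are automatic by Lemma~\ref{lem:準正規部分群のクラス}, and the remaining non-abelian subgroups are few enough to be checked directly (with computer assistance if needed). Throughout both verifications I would lean on the standard fact that pronormality passes to intermediate overgroups, which lets me localise each check to a single maximal subgroup containing $H$.

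The ``only if'' direction is where the real difficulty lies and is the step I expect to be the main obstacle. Invoking CFSG, I would eliminate each remaining family of non-abelian finite simple groups by exhibiting an explicit non-abelian, non-pronormal subgroup: for $A_n$ with $n$ large, two $G$-conjugate copies of a non-abelian subgroup inside a Young-type subgroup that remain non-conjugate in the subgroup they generate; for the sporadic groups, a direct elimination of all but $J_1$ from their subgroup data; and for the Lie-type groups other than $\PSL(2,q)$, a configuration drawn from their parabolic and root-subgroup structure. The delicate part, specific to $\PSL(2,q)$, is the combined subfield-and-congruence analysis: one must show that composite $n$ forces a non-pronormal intermediate subfield subgroup $\PSL(2,p^{m})$, that $q\equiv\pm1\pmod 8$ (outside $q=7,17$) forces a non-pronormal $A_4<S_4$, and that the remaining small prime cases below the stated bound $q>17$ (such as $q=11,13$) are genuinely non-prohamiltonian for a further, separate reason. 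Pinning down the exact list of admissible $q$ --- confirming that $7$ and $17$ are true exceptions while nearby values are not, and that $p\ge 5$ admits only $n=1$ --- is the portion demanding the most careful case analysis, and it is here that I expect the argument to be hardest.
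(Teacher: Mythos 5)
The first thing to note is that the paper contains no proof of this statement: Theorem~\ref{thm:プロハミルトン群分類} is imported verbatim from \cite{ferrara2023-2} and used as a black box throughout (it is the starting point of Proposition~\ref{prop:PSLで準正規性を求めるべき部分群} and of the $J_{1}$ analysis), so there is no in-paper argument to compare yours against. Measured on its own terms, your proposal is a research plan rather than a proof. In the ``if'' direction you correctly identify the tools (Dickson's classification, Lemma~\ref{lem:準正規部分群のクラス} to dispose of maximal, normal, Sylow and Hall subgroups, and localisation of pronormality checks), but you never actually carry out a single verification: every non-maximal non-abelian family --- the dihedral subgroups, the Frobenius subgroups $(\mathbb{Z}_{p})^{j}\rtimes\mathbb{Z}_{m}$ with $j<n$, the copies of $A_{4}$ and $A_{5}$, the subfield subgroups $\PSL(2,p)$ --- is left at the level of ``I would compute $N_{G}(H)$ and the fusion,'' and the claim that the arithmetic hypotheses on $q$ are ``exactly the conditions that eliminate the dangerous families'' is a restatement of the theorem, not an argument for it.

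Two of your specific mechanisms also need more than you give them. First, the assertion that $q\equiv\pm1\pmod 8$ produces a non-pronormal $A_{4}$ inside $S_{4}$ cannot be the whole story, as you yourself concede via $q=7,17$; but conceding it leaves the converse direction without any proved obstruction for primes $q\equiv\pm1\pmod 8$, $q>17$, and you give no separate obstruction at all for $q=11,13$ (note $11,13\not\equiv\pm1\pmod 8$, so your $S_{4}$ mechanism is unavailable there --- this is precisely why the bound $q>17$ in item (c) is delicate). Second, the ``only if'' direction as a whole --- eliminating all alternating groups, all sporadic groups other than $J_{1}$, all Lie-type groups other than $\PSL(2,q)$, and the excluded parameters $q$ --- is explicitly deferred to CFSG-based case analysis that you describe but do not perform; that elimination is the bulk of the original theorem. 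So the proposal sketches a plausible roadmap, broadly consistent with how \cite{ferrara2023-2} proceeds, but every step where the theorem could fail is left unverified, and as submitted it proves nothing.
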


\begin{defi}
A group $G$ is called \emph{NPr-group} if every non-nilpotent subgroup of $G$ is pronormal in $G$.  
\end{defi}

\begin{thm}[{\cite{ferrara2024}}]\label{thm:非冪零ならば準正規な単純群分類}
Let $G$ be a non-abelian finite simple group. Then $G$ is NPr-group if and only if it is isomorphic to one of the following groups.

\begin{enumerate}
  \item[(1)] $\PSL(2,q)$, where $q$ satisfies one of the following properties:
  \begin{enumerate}
    \item $q=2^{n}$ and $n$ is prime,
    \item $q=3^{n}$ and $n$ is an odd prime,
    \item $q=p$ is prime and if $q\equiv \pm 1 \pmod {8}$, then either $q-1$ or $q+1$ is a power of $2$,
  \end{enumerate}

  \item[(2)] $\Sz(q)$, where $q=2^{2n+1}$ and $2n+1$ is a prime number,

  \item[(3)] $J_{1}$.
\end{enumerate}

\end{thm}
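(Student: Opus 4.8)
The plan is to prove both directions by combining the classification of finite simple groups (CFSG) with the explicit subgroup dictionaries of the rank-one groups, while leveraging the already-settled prohamiltonian classification (Theorem~\ref{thm:プロハミルトン群分類}). The starting observation is that a non-nilpotent subgroup is in particular non-abelian, so every prohamiltonian group is automatically an $\NPr$-group; hence all groups of Theorem~\ref{thm:プロハミルトン群分類} already lie in the desired list, and the real content is (i) verifying the two genuinely new features, namely the enlarged prime range for $\PSL(2,q)$ with $q\equiv\pm1\pmod 8$ and the appearance of $\Sz(q)$, and (ii) proving that nothing outside the list survives. As working criterion I would use the local test of Definition~\ref{defi:準正規部分群の定義}: $H\le G$ is pronormal precisely when, for each $g$, the copies $H,H^{g}$ are $\langle H,H^{g}\rangle$-conjugate. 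In practice I verify this either by exhibiting $H$ as a Hall subgroup of $\langle H,H^{g}\rangle$ (so that Lemma~\ref{lem:準正規部分群のクラス} applies) or by showing that $N_{G}(H)$ meets every $G$-conjugate of $H$ lying in $\langle H,H^{g}\rangle$.

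For the \emph{if} direction I would treat the three families separately. For $\PSL(2,q)$ I invoke Dickson's classification: every subgroup is cyclic, dihedral, a Frobenius group $\mathbb{Z}_{p}^{\,k}\rtimes\mathbb{Z}_{d}$ inside a Borel, one of $A_{4},S_{4},A_{5}$, or a subfield group $\PSL(2,q_{0})$ or $\mathrm{PGL}(2,q_{0})$. The non-nilpotent ones are the dihedral groups whose order has an odd prime divisor, the non-nilpotent Frobenius subgroups of the Borel, the three small alternating/symmetric groups, and the subfield groups. Pronormality of each type is checked against the arithmetic of $q$: the dihedral and Frobenius cases reduce to the conjugacy of the cyclic maximal tori of orders $(q-1)/\gcd(2,q-1)$ and $(q+1)/\gcd(2,q-1)$ and their normalizers, which are pronormal as Hall-in-normalizer subgroups, while the alternating subgroups are handled by the controllable fusion of $A_{5}$ and $A_{4},S_{4}$. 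When $q\equiv\pm1\pmod 8$ and either $q-1$ or $q+1$ is a power of $2$, the only obstructions are non-abelian $2$-subgroups, which are nilpotent, so all non-nilpotent subgroups remain pronormal; this is exactly hypothesis (1)(c). For $\Sz(q)$ I use Suzuki's list, whose maximal members are the Frobenius-type Borel of order $q^{2}(q-1)$, the dihedral $D_{2(q-1)}$, the two normalizers $\mathbb{Z}_{q\pm\sqrt{2q}+1}\rtimes\mathbb{Z}_{4}$, and subfield groups $\Sz(q_{0})$; primality of $2n+1$ removes proper subfield groups, leaving only (nilpotent) $2$-subgroups as possible non-pronormal subgroups, so $\Sz(q)$ is an $\NPr$-group. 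For $J_{1}$ I run through its maximal subgroups $\PSL(2,11)$, $2^{3}{:}7{:}3$, $2\times A_{5}$, $19{:}6$, $11{:}10$, $D_{6}\times D_{10}$, $7{:}6$ and their subgroups, verifying pronormality of every non-nilpotent one directly.

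For the \emph{only if} direction I argue by CFSG-driven elimination: for every non-abelian finite simple $G$ outside the list I must produce a non-nilpotent, non-pronormal subgroup. Groups of Lie rank $\ge 2$ and the alternating groups $A_{n}$ with $n\ge 6$ contain a dihedral or Frobenius subgroup $H$ whose $G$-fusion is not controlled by $N_{G}(H)$, giving non-pronormality, and the sporadic groups other than $J_{1}$ are eliminated individually from their $\mathrm{ATLAS}$ subgroup data. This reduces to the rank-one families $\PSL(2,q)$, $\Sz(q)$, $\mathrm{PSU}(3,q)$ and ${}^{2}G_{2}(q)$. For $\PSL(2,q)$ I show that a composite exponent in $q=p^{\,n}$ yields a proper subfield group $\PSL(2,p^{\,m})$ failing pronormality (forcing $n$ prime, and $n$ odd when $p=3$), and that for prime $q\equiv\pm1\pmod 8$ with neither $q-1$ nor $q+1$ a power of $2$, an odd prime divisor of $q-1$ or $q+1$ produces a non-nilpotent dihedral subgroup that is not pronormal. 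The families $\mathrm{PSU}(3,q)$ and ${}^{2}G_{2}(q)$ I rule out by exhibiting an explicit non-nilpotent non-pronormal torus-normalizer.

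The main obstacle will be the fine arithmetic bookkeeping inside $\PSL(2,q)$: one must decide, type by type, exactly which dihedral and Frobenius subgroups are pronormal, and the answer toggles with the residue of $q$ modulo $8$ and with whether $q-1$ or $q+1$ is a $2$-power. Extracting precisely condition (1)(c) — rather than a coarser sufficient condition — requires tracking the two non-conjugate classes of cyclic tori of orders $(q-1)/\gcd(2,q-1)$ and $(q+1)/\gcd(2,q-1)$ together with the action of the field automorphisms, and verifying that the associated dihedral normalizers are pronormal exactly when one of $q-1,\,q+1$ is a power of $2$. A secondary difficulty is making the CFSG elimination genuinely exhaustive for the borderline small cases (e.g.\ $A_{5},A_{6}$ and the smallest $\Sz(q)$), where the exceptional isomorphisms among rank-one groups must be reconciled with the stated parameter ranges.
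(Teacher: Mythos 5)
This statement is not proved in the paper at all: it is quoted verbatim from Ferrara--Trombetti \cite{ferrara2024} and used as a black box in the later sections (for instance in the proofs of Proposition~\ref{prop:PSLで準正規性を求めるべき部分群} and Proposition~\ref{prop:鈴木群においてp部分群と合成数位数dの巡回群以外は準正規}). So there is no internal proof to compare your attempt against; what you are proposing is a reconstruction of the proof of the cited paper, which lies entirely outside the scope of this one.

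Judged on its own terms, your outline has the right overall shape --- the observation that non-nilpotent implies non-abelian, so every prohamiltonian group is an NPr-group, is correct and genuinely useful, and CFSG-driven elimination plus rank-one verification is indeed how results of this type are established --- but it is a programme, not a proof. On the ``if'' side, the decisive verifications are asserted rather than carried out: for $\Sz(q)$, the claim that primality of $2n+1$ leaves ``only (nilpotent) $2$-subgroups as possible non-pronormal subgroups'' is exactly the statement to be proved and does not follow from the list of maximal subgroups alone; one must actually verify pronormality of the non-nilpotent subgroups of the Borel, of the dihedral subgroups, and of the torus normalizers $\mathbb{Z}_{q\pm\sqrt{2q}+1}\rtimes\mathbb{Z}_{4}$, e.g.\ via Rose's criterion (Lemma~\ref{lem:JohnRoseの補題}) or Lemma~\ref{lem:中間のp部分群について}. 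The same applies to the ``controllable fusion'' of $A_4$, $S_4$, $A_5$ in $\PSL(2,q)$ and to the case-by-case check in $J_1$. On the ``only if'' side, every elimination --- alternating groups, Lie rank at least $2$, $\mathrm{PSU}(3,q)$, ${}^{2}G_{2}(q)$, the sporadic groups, and the subfield obstruction forcing $n$ prime --- is a bare claim with no supporting argument, and you yourself flag the extraction of the exact condition (1)(c) as an unresolved obstacle rather than resolving it. These items are precisely where the mathematical content of \cite{ferrara2024} lives, so the proposal as written has genuine gaps at all the load-bearing points.
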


Based on Theorems \ref{thm:プロハミルトン群分類} and \ref{thm:非冪零ならば準正規な単純群分類}, this paper considers three classes of finite simple groups. The first class is $\PSL(2,q)$ with $q$ satisfying Theorems \ref{thm:プロハミルトン群分類} and \ref{thm:非冪零ならば準正規な単純群分類}. The second class is the Janko group $J_{1}$. The third class is the Suzuki group of Lie type $\Sz(2^{2n+1})$ with $2n+1$ prime. We aim to provide a complete classification of pronormal subgroups for each of these groups.

The discussion is organized in three sections. \S~3 treats $\PSL(2,q)$ under the conditions of Theorems \ref{thm:プロハミルトン群分類} and \ref{thm:非冪零ならば準正規な単純群分類}. \S~4 examines $J_{1}$. \S~5 analyzes $\Sz(2^{2n+1})$ with $2n+1$ prime. At the beginning of each section we fix the notation that is specific to the case under consideration. To accomplish the classification we rely on results from previous research.

\begin{defi}\label{defi:csc群}
A finite group $G$ is called \emph{csc-group} if given two cyclic subgroups $X, Y$ of $G$ of the same order, then there exists $g\in G$ such that $X=Y^{g}$. In other words, it refers to a group $G$ where all cyclic subgroups of the same order are conjugate to each other.
\end{defi}

\begin{lem}[\cite{Jabara2009},\S3]\label{lem:PSL,J1,SzはCSC群}
$\PSL(2,q)$ with $q\ge 3$, $\Sz(q)$ with $q\ge 8$, and $J_{1}$ are csc-groups. 
\end{lem}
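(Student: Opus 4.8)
The plan is to derive the csc property in each case from a uniform structural principle: in all three families every cyclic subgroup is contained in a maximal cyclic (or maximal abelian) subgroup belonging to one of only finitely many conjugacy classes, and the orders attached to distinct classes are essentially coprime. Since a cyclic group has exactly one subgroup of each order dividing its own, it then suffices to establish two facts: first, that for each admissible order $m$ every cyclic subgroup of order $m$ sits inside a maximal cyclic subgroup of a single prescribed type; and second, that all maximal cyclic subgroups of that type are conjugate in $G$. Granting these, if $\langle x\rangle$ and $\langle y\rangle$ both have order $m$ and lie in maximal cyclic subgroups $C,C'$ of the common type, then any $g$ with $C^{g}=C'$ carries the unique order-$m$ subgroup of $C$ to that of $C'$, so that $\langle x\rangle^{g}=\langle y\rangle$, which is exactly the csc condition.

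For $\PSL(2,q)$ I would invoke Dickson's description of the element orders: every nontrivial element has order dividing $p$, or $(q-1)/d$, or $(q+1)/d$, where $d=\gcd(2,q-1)$. A semisimple element whose order divides $(q-1)/d$ generates a subgroup of a split torus, one whose order divides $(q+1)/d$ a subgroup of a nonsplit torus; all split tori are mutually conjugate, as are all nonsplit tori, and each such torus is cyclic, so the reduction above applies. Since $(q-1)/d$ and $(q+1)/d$ are coprime, for every semisimple order $m>1$ the torus type is determined by $m$ alone and no ambiguity arises. The only separate case is that of the unipotent elements of order $p$: these lie in Sylow $p$-subgroups, which are conjugate by Sylow's theorem, and one must check that the relevant normalizer acts transitively on the order-$p$ subgroups of a Sylow $p$-subgroup. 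This is the one genuinely delicate point when $q=p^{\,n}$ with $n>1$, for then the Sylow $p$-subgroup is elementary abelian rather than cyclic.

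The Suzuki groups $\Sz(q)$, $q=2^{\,2n+1}$, are treated in the same spirit. Here the maximal tori are cyclic of the three pairwise coprime (and odd) orders $q-1$, $q+\sqrt{2q}+1$, and $q-\sqrt{2q}+1$, the latter two being the factors of $q^{2}+1$, and all tori of each fixed order are mutually conjugate; an odd-order element therefore determines its torus type uniquely and the reduction applies verbatim. The $2$-part is handled separately: the Sylow $2$-subgroup has exponent $4$, so the only cyclic $2$-subgroups have order $2$ or $4$, and it suffices to recall that $\Sz(q)$ has a single conjugacy class of involutions and a single class of elements of order $4$.

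For $J_{1}$, a fixed group of order $175560$, I would argue by direct inspection: its element orders are $1,2,3,5,6,7,10,11,15,19$, and from the known conjugacy-class and maximal-subgroup data (for instance the ATLAS) one verifies that for each such order there is exactly one conjugacy class of cyclic subgroups. I expect the main obstacle in every case to lie not with the generic semisimple orders, which the coprimality argument dispatches cleanly, but precisely with the overlapping small orders, above all the involutions and the prime-order unipotent elements. There the order fails to single out a maximal cyclic subgroup of a unique type, so one cannot avoid appealing to the sharper input that these elements form a single conjugacy class, or directly that the cyclic subgroups they generate do.
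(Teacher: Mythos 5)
The paper gives no argument for this lemma at all; it is imported wholesale from \cite{Jabara2009}, so the real question is whether your standalone proof is complete. It is not. The torus part of your plan is sound: the split and nonsplit tori of $\PSL(2,q)$ are cyclic of the coprime orders $(q-1)/d$ and $(q+1)/d$, each type forms a single conjugacy class, and uniqueness of subgroups of a given order in a cyclic group then settles all semisimple orders; the same scheme works for the three cyclic Hall subgroups of $\Sz(q)$ of pairwise coprime odd orders. The genuine gap is the step you explicitly defer: transitivity of $N_{G}(S)$ on the order-$p$ subgroups of an elementary abelian Sylow $p$-subgroup $S$ of $\PSL(2,p^{n})$, $n>1$. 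This is not a routine verification --- it is exactly where the validity of the statement depends on $q$. Since distinct Sylow $p$-subgroups of $\PSL(2,q)$ intersect trivially, fusion of subgroups of $S$ is controlled by $N_{G}(S)=S\rtimes C$, where $C$ acts on $S\cong(\GF(q),+)$ by multiplication by the squares of $\GF(q)^{\times}$; the order-$p$ subgroups are the $\GF(p)$-lines, and $C$ permutes them transitively if and only if $(\GF(q)^{\times})^{2}\cdot\GF(p)^{\times}=\GF(q)^{\times}$, which holds precisely when $p=2$ or $n$ is odd. For $p$ odd and $n$ even it fails: $\PSL(2,9)\cong A_{6}$ has two conjugacy classes of subgroups of order $3$ (generated by a $3$-cycle and by a product of two disjoint $3$-cycles), so $\PSL(2,9)$ is not a csc-group, and the lemma as literally stated (all $q\ge 3$) admits no proof. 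Your argument can only be closed under the parameter restrictions this paper actually imposes ($q=2^{n}$, $q=3^{n}$ with $n$ odd, or $q=p$ prime), and the deferred transitivity computation is precisely the mathematical content of that restriction; leaving it as ``one must check'' is a genuine gap, not a postponed formality.

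A second, smaller defect: for $\Sz(q)$ you invoke ``a single class of elements of order $4$.'' That is false --- $\Sz(q)$ has exactly two classes of elements of order $4$, represented by an element $\sigma$ and its inverse $\sigma^{-1}$, which are not conjugate. What is true, and what your reduction actually needs, is that the \emph{cyclic subgroups} of order $4$ form a single class; this follows because every such subgroup contains one generator from each of the two element classes. The same element-versus-subgroup distinction arises in $J_{1}$, where elements of orders $5$, $10$, $15$, $19$ split into several classes fused only under power maps, yet each order carries a single class of cyclic subgroups. So the argument must be run consistently at the level of cyclic subgroups throughout: quoting element-class counts is either incorrect (as for order $4$ in $\Sz(q)$) or requires an additional power-map fusion argument to descend to subgroups (as for $J_{1}$). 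With these repairs, and with $q$ restricted as in the paper, your approach does go through, and it is in spirit the proof one would extract from the cited literature.
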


\begin{lem}[\cite{revin2012}]\label{lem:中間のp部分群について}
Let $H$ be a subgroup of a group $G$. If $H$ contains a $p$-subgroup $P$ which is pronormal in $G$, then $H$ is pronormal in $G$ if and only if $H$ is pronormalized by every element of $N_{G}(P)$.
\end{lem}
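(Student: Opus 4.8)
The plan is to prove the two implications separately, with the forward direction being essentially immediate and all the content residing in the converse. For the forward direction, suppose $H$ is pronormal in $G$. Since $N_{G}(P)\subseteq G$, the defining condition of pronormality applied to each $g\in N_{G}(P)$ says precisely that $H$ and $H^{g}$ are conjugate inside $\langle H,H^{g}\rangle$; hence $H$ is pronormalized by every element of $N_{G}(P)$, with nothing further to check.

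For the converse, fix an arbitrary $g\in G$ and set $L:=\langle H,H^{g}\rangle$; the goal is to produce $x\in L$ with $H^{x}=H^{g}$. First I would record the standard fact that pronormality passes to intermediate subgroups: since $P$ is pronormal in $G$ and $P\le H\le L$, it is pronormal in $L$ as well, because for any $\ell\in L$ the conjugacy of $P$ and $P^{\ell}$ already takes place inside $\langle P,P^{\ell}\rangle\subseteq L$. As both $P$ and $P^{g}$ lie in $L$, pronormality of $P$ in $L$ yields some $y\in L$ with $P^{y}=P^{g}$.

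The next step is a Frattini-type manipulation. From $P^{y}=P^{g}$ one obtains $P^{gy^{-1}}=P$, so $n:=gy^{-1}$ normalizes $P$, i.e.\ $n\in N_{G}(P)$, and $g=ny$. Crucially, $H^{n}=(H^{g})^{y^{-1}}$ lies in $L$ because $H^{g}\le L$ and $y^{-1}\in L$, so that $\langle H,H^{n}\rangle\subseteq L$. Now invoke the hypothesis that $H$ is pronormalized by $n\in N_{G}(P)$: there is $z\in\langle H,H^{n}\rangle\subseteq L$ with $H^{z}=H^{n}$. Composing conjugations gives $H^{zy}=(H^{z})^{y}=(H^{n})^{y}=H^{ny}=H^{g}$, and $zy\in L$, which is exactly the element required by Definition~\ref{defi:準正規部分群の定義}.

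The main obstacle is the bookkeeping that keeps all conjugating elements inside $L=\langle H,H^{g}\rangle$: the element $n=gy^{-1}$ need not itself belong to $L$, and the whole argument hinges on the observation that $H^{n}=(H^{g})^{y^{-1}}$ nonetheless stays inside $L$, so that the pronormalizing element $z$ supplied by the hypothesis may legitimately be taken in $L$. Once this containment is secured, the composition rule $(A^{a})^{b}=A^{ab}$ closes the argument mechanically.
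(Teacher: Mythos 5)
The paper offers no proof of this lemma to compare against: it is imported verbatim from \cite{revin2012}. Judged on its own, your argument is essentially the standard Frattini-type proof of this result, and its overall architecture is sound: the forward direction is immediate; for the converse you factor $g=ny$ with $n\in N_{G}(P)$ and $y\in L:=\langle H,H^{g}\rangle$, apply the hypothesis to $n$, observe $H^{n}=(H^{g})^{y^{-1}}\subset L$ so the pronormalizing element $z$ can be taken in $\langle H,H^{n}\rangle\subseteq L$, and compose to get $H^{zy}=H^{g}$ with $zy\in L$.

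One step, however, is misjustified, even though its conclusion is true. You obtain $y\in L$ with $P^{y}=P^{g}$ by invoking ``pronormality of $P$ in $L$.'' That property cannot be applied here: pronormality of $P$ in $L$ only governs the conjugates $P^{\ell}$ with $\ell\in L$, and your $g$ need not lie in $L$, so $P^{g}$ is not a priori of the form $P^{\ell}$. The correct justification is more direct and renders your intermediate-subgroup fact unnecessary: since $P$ is pronormal in $G$, Definition~\ref{defi:準正規部分群の定義} applied to $g$ itself gives $y\in\langle P,P^{g}\rangle$ with $P^{y}=P^{g}$, and $\langle P,P^{g}\rangle\subseteq\langle H,H^{g}\rangle=L$ because $P\subset H$ and $P^{g}\subset H^{g}$. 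With that one-line repair, the remainder of your proof (the computation $P^{gy^{-1}}=P$, hence $n=gy^{-1}\in N_{G}(P)$; the containment $H^{n}\subset L$; and the final composition $H^{zy}=H^{g}$) is correct and complete.
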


\begin{cor}\label{cor:正規ならば準正規なので補題成立}
Let $H$ be a subgroup of a group $G$. If $H$ contains a $p$-subgroup $P$ which is pronormal in $G$.
If $H \triangleleft N_{G}(P)$, then $H$ is pronormal in $G$.
\end{cor}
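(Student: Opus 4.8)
The plan is to invoke Lemma~\ref{lem:中間のp部分群について} directly and verify its hypothesis in the strengthened situation where the local pronormalizing condition is upgraded to genuine normality. The setup matches the lemma verbatim: $H$ contains a $p$-subgroup $P$ that is pronormal in $G$, so the lemma applies and reduces the pronormality of $H$ in $G$ to the single condition that $H$ be pronormalized by every element of $N_{G}(P)$.

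Thus the only work is to check that each $g \in N_{G}(P)$ pronormalizes $H$. Here I would unpack ``pronormalized by $g$'' according to Definition~\ref{defi:準正規部分群の定義}: it requires some $x \in \langle H, H^{g}\rangle$ with $H^{x} = H^{g}$. Since $H \triangleleft N_{G}(P)$ and $g \in N_{G}(P)$, normality gives $H^{g} = H$. Consequently $\langle H, H^{g}\rangle = H$, and choosing $x = \id \in H$ yields $H^{x} = H = H^{g}$. Hence every element of $N_{G}(P)$ pronormalizes $H$, and Lemma~\ref{lem:中間のp部分群について} immediately concludes that $H$ is pronormal in $G$.

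I expect essentially no obstacle here: the statement is a direct specialization of Lemma~\ref{lem:中間のp部分群について}, obtained by replacing the hypothesis ``pronormalized by every element of $N_{G}(P)$'' with the stronger ``normalized by every element of $N_{G}(P)$''. The only point deserving a moment's care is the conceptual observation that being normalized by $g$ is far stronger than being pronormalized by $g$, because $H^{g} = H$ collapses $\langle H, H^{g}\rangle$ to $H$ itself, allowing the conjugating element $x$ to be taken as the identity. No delicate group-theoretic computation is required.
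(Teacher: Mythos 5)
Your proof is correct and follows essentially the same route as the paper: both arguments reduce the claim to the equivalence in Lemma~\ref{lem:中間のp部分群について} and then use the normality hypothesis $H \triangleleft N_{G}(P)$ to verify the pronormalizing condition. The only cosmetic difference is that the paper cites Lemma~\ref{lem:準正規部分群のクラス} (normal implies pronormal) for this verification, whereas you check it directly by taking $x=\id$ since $H^{g}=H$ collapses $\langle H,H^{g}\rangle$ to $H$.
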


\begin{proof}
If $H \triangleleft N_{G}(P)$, then by Lemma~\ref{lem:準正規部分群のクラス} the subgroup $H$ is pronormal in $N_{G}(P)$. 
By the equivalence in Lemma~\ref{lem:中間のp部分群について}, this implies that $H$ is pronormal in $G$.
\end{proof}

\begin{lem}[\cite{rose1967}]\label{lem:JohnRoseの補題}
Let \(G\) be a finite group and \(P\subset G\) a \(p\)-subgroup.
Then \(P\) is pronormal in \(G\) if and only if for every Sylow-\(p\) subgroup \(S\subset G\) with \(P\subset S\) one has \(P\triangleleft N_G(S)\).
\end{lem}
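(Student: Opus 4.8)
The plan is to prove both implications of the stated equivalence directly. The single tool that drives everything is the elementary fact that a pronormal subgroup of a finite nilpotent group is normal; I will only need it for finite $p$-groups, where it reads: if $P \leq S$ with $S$ a $p$-group and $P$ pronormal in $S$, then $P \triangleleft S$. This follows because every subgroup of the nilpotent group $S$ is subnormal, and a subnormal pronormal subgroup is normal (a quick induction on $\lvert S\rvert$: take a proper normal subgroup $M \triangleleft S$ containing $P$ in which $P$ is still subnormal, namely the penultimate term of a subnormal series from $P$ to $S$, conclude $P \triangleleft M$ by induction, and then for $x \in S$ note $P^{x} \leq M^{x} = M$, so the conjugator supplied by pronormality lies in $\langle P, P^{x}\rangle \leq M$ and fixes the normal subgroup $P$). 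I will use throughout two routine facts: pronormality is inherited by intermediate subgroups (if $P \leq H \leq G$ and $P$ is pronormal in $G$, then it is pronormal in $H$, since for $h \in H$ the group $\langle P, P^{h}\rangle$ already lies in $H$), and a Sylow $p$-subgroup of $G$ that happens to lie in a subgroup $L \leq G$ is a Sylow $p$-subgroup of $L$.

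For the forward direction, assume $P$ is pronormal in $G$ and fix a Sylow $p$-subgroup $S \supseteq P$. By inheritance $P$ is pronormal in the $p$-group $S$, so the fact just recalled gives $P \triangleleft S$. Now take any $g \in N_{G}(S)$; then $P^{g} \leq S^{g} = S$, and pronormality in $G$ supplies $x \in \langle P, P^{g}\rangle \leq S$ with $P^{x} = P^{g}$. Since $P \triangleleft S$ we have $P^{x} = P$, hence $P^{g} = P$. Thus $N_{G}(S) \leq N_{G}(P)$, that is, $P \triangleleft N_{G}(S)$.

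For the reverse direction, assume $P \triangleleft N_{G}(S)$ for every Sylow $p$-subgroup $S \supseteq P$. I first record two consequences. If $S$ and $T$ are Sylow $p$-subgroups of $G$ both containing $P$, then $S \leq N_{G}(S) \leq N_{G}(P)$ and similarly $T \leq N_{G}(P)$ by hypothesis, so $S$ and $T$ are Sylow $p$-subgroups of $N_{G}(P)$ and are therefore conjugate by some $n \in N_{G}(P)$, which fixes $P$. From this I deduce a \emph{weak-closure} statement: if $w \in G$ and $P^{w}$ lies in a Sylow $p$-subgroup $S$ that also contains $P$, then $P^{w} = P$. Indeed $P^{w} \leq S$ gives $P \leq S^{w^{-1}}$, so $S$ and $S^{w^{-1}}$ are Sylow $p$-subgroups containing $P$ and hence $S^{w^{-1}} = S^{n}$ for some $n \in N_{G}(P)$; then $nw \in N_{G}(S) \leq N_{G}(P)$, and combining $P^{nw} = P$ with $P^{nw} = (P^{n})^{w} = P^{w}$ yields $P^{w} = P$. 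To finish, fix $g \in G$, set $K = \langle P, P^{g}\rangle$, and choose a Sylow $p$-subgroup $S_{K}$ of $K$ containing $P$. Since $P^{g}$ is a $p$-subgroup of $K$, Sylow's theorem inside $K$ provides $k \in K$ with $P^{gk^{-1}} \leq S_{K}$; extending $S_{K}$ to a Sylow $p$-subgroup $S$ of $G$, weak closure applied with $w = gk^{-1}$ gives $P^{gk^{-1}} = P$, i.e. $P^{g} = P^{k}$ with $k \in K$. Hence $P$ and $P^{g}$ are conjugate in $\langle P, P^{g}\rangle$, so $P$ is pronormal.

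The step I expect to demand the most care is the reverse direction, and within it the coordination of two distinct conjugacy arguments. One must use Sylow's theorem \emph{inside} $K = \langle P, P^{g}\rangle$ to move $P^{g}$ into the Sylow subgroup $S_{K}$ by an element that provably stays in $K$ --- this is essential, since pronormality requires the final conjugacy to be witnessed inside $K$ --- while the separate weak-closure lemma, proved via Sylow conjugacy in $N_{G}(P)$, is what actually consumes the hypothesis $P \triangleleft N_{G}(S)$. Keeping these two layers apart, and in particular checking that the conjugator produced for $P^{g} = P^{k}$ lies in $K$ rather than merely in $G$, is the genuine technical heart of the argument.
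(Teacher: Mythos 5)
The paper does not prove this lemma at all: it is quoted as a known result from Rose (1967), so there is no internal proof to compare against. Your blind proof is correct and is essentially the standard argument for Rose's criterion: the forward direction via inheritance of pronormality plus the fact that a pronormal subnormal subgroup is normal (applied inside the Sylow subgroup $S$), and the reverse direction via Sylow conjugacy inside $N_G(P)$ (your weak-closure step, which is where the hypothesis $P\triangleleft N_G(S)$ is consumed) combined with Sylow's theorem inside $K=\langle P,P^{g}\rangle$ to ensure the final conjugator lies in $K$. All the supporting facts you invoke are correctly stated and correctly applied, so your write-up would serve as a valid self-contained replacement for the citation.
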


\begin{cor}[\cite{ferrara2023-2}]\label{cor:巡回シローp部分群の部分群も準正規}
Let \(G\) be a finite group whose Sylow-\(p\) subgroup \(S\) is cyclic. Then every subgroup \(H\subset S\) is pronormal in \(G\).
\end{cor}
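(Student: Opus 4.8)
The plan is to reduce pronormality of $H$ to a purely local condition via Rose's criterion (Lemma~\ref{lem:JohnRoseの補題}), and then to exploit the rigidity of the subgroup structure of cyclic $p$-groups. First I would note that, since $S$ is a cyclic $p$-group and $H\subset S$, the subgroup $H$ is itself a cyclic $p$-subgroup of $G$. This places $H$ squarely within the hypothesis of Lemma~\ref{lem:JohnRoseの補題}, which asserts that $H$ is pronormal in $G$ if and only if $H\triangleleft N_G(S')$ for every Sylow-$p$ subgroup $S'$ of $G$ with $H\subset S'$. Thus the whole corollary is established once this normality condition is checked for an arbitrary such $S'$.

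The core step is to verify that condition. Since all Sylow-$p$ subgroups of $G$ are conjugate, any $S'$ containing $H$ is conjugate to $S$ and is therefore again cyclic. In a cyclic group there is exactly one subgroup of each order dividing the group order, so $H$ is the \emph{unique} subgroup of $S'$ of order $\lvert H\rvert$. A subgroup that is the unique one of its order is characteristic, whence $H$ is characteristic in $S'$.

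From this the normality is immediate: every $g\in N_G(S')$ restricts to an automorphism of $S'$, and automorphisms fix characteristic subgroups setwise, so $H^{g}=H$. Hence $H\triangleleft N_G(S')$. Since $S'$ was an arbitrary Sylow-$p$ subgroup of $G$ containing $H$, Lemma~\ref{lem:JohnRoseの補題} yields that $H$ is pronormal in $G$, as desired.

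I do not expect a genuine obstacle here; the argument is short, and the only ingredient beyond Rose's criterion is the elementary fact that a finite cyclic group has a unique subgroup of each admissible order. The one point deserving mild care is the assertion that $S'$ is cyclic: this follows from the conjugacy (hence isomorphism) of all Sylow-$p$ subgroups to $S$, not from the mere inclusion $H\subset S'$, and it is precisely this cyclicity that makes $H$ characteristic in every such $S'$.
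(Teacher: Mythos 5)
Your proof is correct and is precisely the derivation the paper intends: the statement is placed as an immediate corollary of Rose's criterion (Lemma~\ref{lem:JohnRoseの補題}), and your chain of reasoning --- any Sylow $p$-subgroup $S'$ containing $H$ is conjugate to $S$, hence cyclic, so $H$ is the unique subgroup of $S'$ of order $\lvert H\rvert$, hence characteristic in $S'$, hence normal in $N_G(S')$ --- is the standard way to verify the normality condition required there. The paper itself gives no written proof (it cites \cite{ferrara2023-2}), and your argument fills that in completely, including the correct observation that cyclicity of $S'$ comes from Sylow conjugacy rather than from the inclusion $H\subset S'$.
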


\begin{lem}[\cite{DeGiovanniVincenzi2000}]\label{lem:準正規性の推移}
Let $\varphi:G\to G_{1}$ be a surjective homomorphism and let $H \subset G$ be pronormal in $G$.
Then $\varphi(H)$ is pronormal in $G_{1}$.
\end{lem}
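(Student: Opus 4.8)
The plan is to verify the defining condition of pronormality for \(\varphi(H)\) in \(G_{1}\) directly, by pulling back each element of \(G_{1}\) along \(\varphi\) and pushing forward the conjugating element supplied by the pronormality of \(H\). Concretely, I would fix an arbitrary \(g_{1}\in G_{1}\) and, using surjectivity of \(\varphi\), choose \(g\in G\) with \(\varphi(g)=g_{1}\). Since \(H\) is pronormal in \(G\), Definition~\ref{defi:準正規部分群の定義} furnishes an element \(x\in\langle H,H^{g}\rangle\) with \(H^{x}=H^{g}\). The candidate conjugator in \(G_{1}\) is then \(x_{1}:=\varphi(x)\), and the goal reduces to checking that \(x_{1}\) lies in \(\langle \varphi(H),\varphi(H)^{g_{1}}\rangle\) and satisfies \(\varphi(H)^{x_{1}}=\varphi(H)^{g_{1}}\).

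The second equality is immediate from the homomorphism property. Because \(\varphi\) respects conjugation, \(\varphi(H^{g})=\varphi(g)^{-1}\varphi(H)\varphi(g)=\varphi(H)^{g_{1}}\), and likewise \(\varphi(H^{x})=\varphi(H)^{x_{1}}\); applying \(\varphi\) to \(H^{x}=H^{g}\) therefore yields \(\varphi(H)^{x_{1}}=\varphi(H)^{g_{1}}\), as required.

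The one step that deserves a careful word is the membership \(x_{1}\in\langle \varphi(H),\varphi(H)^{g_{1}}\rangle\). Here I would invoke the standard fact that a homomorphism sends the subgroup generated by a collection of subgroups to the subgroup generated by their images, so that
\[
\varphi\bigl(\langle H,H^{g}\rangle\bigr)=\langle \varphi(H),\varphi(H^{g})\rangle=\langle \varphi(H),\varphi(H)^{g_{1}}\rangle ,
\]
using the conjugation identity from the previous paragraph for the final equality. Since \(x\in\langle H,H^{g}\rangle\), its image \(x_{1}=\varphi(x)\) lands in the displayed join, completing the verification.

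No genuine obstacle arises: the argument is a direct transport of the pronormality witness through \(\varphi\). The only point requiring mild attention is the interaction of \(\varphi\) with the join operation \(\langle\,\cdot\,,\cdot\,\rangle\), i.e.\ confirming that the image of a generated subgroup is generated by the images; surjectivity of \(\varphi\) is used only to guarantee that every \(g_{1}\in G_{1}\) has a preimage, while the remaining identities are formal consequences of \(\varphi\) being a homomorphism.
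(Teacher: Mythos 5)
Your proof is correct and complete: the two checks you isolate (that $\varphi$ transports the conjugation identity $H^{x}=H^{g}$ to $\varphi(H)^{x_{1}}=\varphi(H)^{g_{1}}$, and that $\varphi\bigl(\langle H,H^{g}\rangle\bigr)=\langle \varphi(H),\varphi(H)^{g_{1}}\rangle$ so the witness $x_{1}=\varphi(x)$ lands in the right join) are exactly what the definition requires, and both are justified properly. Note that the paper itself gives no proof of this lemma, since it is quoted from \cite{DeGiovanniVincenzi2000}; your argument is the standard direct verification one would find there, so there is nothing to reconcile, and your proof could serve as a self-contained replacement for the citation.
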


\begin{lem}\label{lem:共役類の準正規性}
Let $H \subset G$ be a pronormal subgroup of $G$. Then any conjugate subgroup $H^a$ of $H$ is also a pronormal subgroup of $G$. Therefore, all subgroups conjugate to a pronormal subgroup $H$ in $G$ are pronormal.
\end{lem}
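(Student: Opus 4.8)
The plan is to recognize that passing from \(H\) to a conjugate \(H^{a}\) is nothing but applying an inner automorphism of \(G\), and that pronormality is preserved under surjective homomorphisms by Lemma~\ref{lem:準正規性の推移}. Concretely, fix \(a\in G\) and consider the conjugation map \(\varphi_{a}\colon G\to G\), \(\varphi_{a}(x)=x^{a}=a^{-1}xa\). This is an automorphism of \(G\), hence in particular a surjective homomorphism \(G\to G\), and it satisfies \(\varphi_{a}(H)=H^{a}\). Since \(H\) is pronormal in \(G\), Lemma~\ref{lem:準正規性の推移} applied to \(\varphi_{a}\) immediately yields that \(\varphi_{a}(H)=H^{a}\) is pronormal in \(G\). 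As \(a\in G\) was arbitrary, every conjugate of \(H\) is pronormal, which is exactly the asserted ``therefore'' clause.

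Should a self-contained argument from Definition~\ref{defi:準正規部分群の定義} be preferred, I would argue as follows. Write \(K=H^{a}\) and fix \(g\in G\); the goal is to produce \(y\in\langle K,K^{g}\rangle\) with \(K^{y}=K^{g}\). The engine is the equivariance of the join under conjugation: since conjugation by any element is an automorphism, \(\langle K,K^{g}\rangle^{a^{-1}}=\langle H,H^{g^{a^{-1}}}\rangle\), where \(g^{a^{-1}}=aga^{-1}\). Applying the pronormality of \(H\) to the rotated parameter \(g^{a^{-1}}\) gives \(x\in\langle H,H^{g^{a^{-1}}}\rangle\) with \(H^{x}=H^{g^{a^{-1}}}\). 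Conjugating back, \(y:=x^{a}\) lies in \(\langle K,K^{g}\rangle\), and a short computation with the convention \(A^{b}=b^{-1}Ab\) gives \(K^{y}=(H^{a})^{x^{a}}=(H^{x})^{a}=(H^{g^{a^{-1}}})^{a}=K^{g}\), as required.

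Neither route presents a genuine obstacle, since the content is entirely formal. The only points demanding care are the bookkeeping forced by the convention \(A^{b}=b^{-1}Ab\) and the repeated use of the identity \(\langle A,B\rangle^{c}=\langle A^{c},B^{c}\rangle\), valid because conjugation is an automorphism and automorphisms commute with forming generated subgroups. I would therefore present the one-line argument via Lemma~\ref{lem:準正規性の推移} as the proof, since it subsumes the direct verification.
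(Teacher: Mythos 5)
Your proposal is correct and its main argument—applying Lemma~\ref{lem:準正規性の推移} to the inner automorphism $x\mapsto a^{-1}xa$, which is surjective and sends $H$ to $H^{a}$—is exactly the proof given in the paper. The alternative direct verification from Definition~\ref{defi:準正規部分群の定義} is also sound, but it is not needed and the paper does not include it.
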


\begin{proof}
Apply Lemma~\ref{lem:準正規性の推移} to the automorphism
$\iota_{a}:G\to G$, $\iota_{a}(x)=a^{-1}xa$.
Since $\iota_{a}$ is surjective, Lemma~\ref{lem:準正規性の推移} yields that
$\iota_{a}(H)=H^{a}$ is pronormal in $G$. As $a\in G$ was arbitrary, the claim follows.
\end{proof}

\begin{cor}\label{cor:PSL,J1,Szの巡回部分群の準正規性は代表1つを見ればok}
Let $G$ be $\PSL(2,q)$ with $q\ge 3$, or $\Sz(q)$ with $q\ge 8$, or $J_{1}$, and let $H\subset G$ be cyclic of order $d$.
Then $H$ is pronormal in $G$ if and only if every cyclic subgroup of $G$ of order $d$ is pronormal.
In particular, pronormality and non-pronormality are uniform across all cyclic subgroups of order $d$ in $G$.
\end{cor}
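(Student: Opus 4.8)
The plan is to combine the csc-property of $G$ with the conjugation-invariance of pronormality; the statement is a direct synthesis of the two, so the work is organizational rather than computational. First I would dispose of the easy backward implication: if every cyclic subgroup of $G$ of order $d$ is pronormal, then $H$, being itself a cyclic subgroup of order $d$, is in particular pronormal. No hypotheses beyond membership are needed here.

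For the forward implication, suppose $H$ is pronormal and let $K\subset G$ be an arbitrary cyclic subgroup of order $d$. Since $G$ is $\PSL(2,q)$ with $q\ge 3$, $\Sz(q)$ with $q\ge 8$, or $J_{1}$, Lemma~\ref{lem:PSL,J1,SzはCSC群} applies and tells us $G$ is a csc-group; hence the two cyclic subgroups $H$ and $K$ of the common order $d$ are conjugate, so there is $g\in G$ with $K=H^{g}$. By Lemma~\ref{lem:共役類の準正規性}, any conjugate of a pronormal subgroup is pronormal, so $H^{g}=K$ is pronormal. As $K$ was arbitrary, every cyclic subgroup of order $d$ is pronormal, which gives the forward direction.

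Finally, for the closing ``in particular'' clause I would observe that the biconditional just proved shows pronormality is constant across the entire set of order-$d$ cyclic subgroups; since that set is a single conjugacy class (again by the csc-property), the negation is equally constant, so non-pronormality is uniform as well.

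I do not expect a genuine obstacle: the only point requiring attention is that the stated ranges on $q$ are exactly those for which Lemma~\ref{lem:PSL,J1,SzはCSC群} guarantees the csc-property, and by hypothesis $G$ lies in that range. The argument therefore reduces to verifying that the hypotheses of Lemmas~\ref{lem:PSL,J1,SzはCSC群} and~\ref{lem:共役類の準正規性} are met, after which the conclusion is immediate.
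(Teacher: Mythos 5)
Your proposal is correct and follows exactly the paper's own argument: invoke Lemma~\ref{lem:PSL,J1,SzはCSC群} to see that all cyclic subgroups of order $d$ form a single conjugacy class, then apply Lemma~\ref{lem:共役類の準正規性} to transfer pronormality across that class. Your version merely spells out the two directions of the biconditional more explicitly than the paper does; the substance is identical.
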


\begin{proof}
By Lemma~\ref{lem:PSL,J1,SzはCSC群}, each of the listed groups is a csc-group, meaning that all cyclic subgroups of a fixed order are mutually conjugate. 
Therefore any two subgroups isomorphic to $\mathbb{Z}_{d}$ lie in a single conjugacy class. 
By Lemma~\ref{lem:共役類の準正規性}, pronormality is preserved under conjugation. 
Hence either every cyclic subgroup of order $d$ is pronormal or none is, proving the claim.
\end{proof}

\begin{lem}[\cite{DeGiovanniVincenzi2000}]\label{lem:直積しても準正規性は保存}
Let $G$ be a group and let $A,B\subset G$ be pronormal subgroups such that $AB=BA$. Then $AB$ is a pronormal subgroup of $G$.
\end{lem}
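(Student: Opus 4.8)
The plan is to verify the defining condition of Definition~\ref{defi:準正規部分群の定義} for $H := AB$. First I would record that $H$ is genuinely a subgroup: the hypothesis $AB = BA$ is exactly the condition under which the product of two subgroups is again a subgroup, so $H \le G$ and $H$ contains both $A$ and $B$. Fix an arbitrary $g \in G$; the goal becomes to produce $x \in L := \langle H, H^{g}\rangle$ with $H^{x} = H^{g}$. Since $A, B \le H$ and $A^{g}, B^{g} \le H^{g}$, one has $L = \langle A, B, A^{g}, B^{g}\rangle$. Two observations localize the problem to $L$. Pronormality passes to any intermediate subgroup (immediate from Definition~\ref{defi:準正規部分群の定義}: for $m\in L$ the required conjugacy of $A$ and $A^{m}$ already takes place inside $\langle A, A^{m}\rangle \le L$ and never refers to the ambient group), so $A$ and $B$ are pronormal in $L$. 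Moreover, although $g$ itself need not lie in $L$, pronormality of $A$ and of $B$ yields $s, t \in L$ with $A^{s} = A^{g}$ and $B^{t} = B^{g}$. Thus every conjugate we shall need to match is realized by an element of $L$.

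Next I would peel off the $A$-factor. Conjugating by $s \in L$ gives $H^{s} = A^{s} B^{s} = A^{g} B^{s}$, a subgroup of $L$ sharing the common factor $C := A^{g}$ with $H^{g} = A^{g} B^{g}$. As conjugation by $s$ is an inner automorphism of $L$, it suffices to conjugate $H^{s}$ to $H^{g}$ inside $L$, and for this it is enough to find $v \in N_{L}(C)$ with $(B^{s})^{v} = B^{g}$: then $(H^{s})^{v} = C^{v} (B^{s})^{v} = C\,B^{g} = H^{g}$ and $x := sv \in L$ does the job. Here $B^{s}$ is an $L$-conjugate of $B$, and $B^{g} = B^{t}$ with $t\in L$ is as well, so both are pronormal (Lemma~\ref{lem:共役類の準正規性}) and are already conjugate in $L$. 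The whole problem has been reduced to realizing that single conjugacy by an element that simultaneously normalizes $C$.

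The hard part will be exactly this synchronization. A single element of $L$ conjugating $B^{s}$ to $B^{g}$ need not normalize $C$, and permutability $AB=BA$ does \emph{not} upgrade to $B$ normalizing $A$, so the two factors cannot simply be conjugated separately and recombined. My plan to overcome this is a Frattini-type argument driven by the pronormality of $C$. Setting $N := \langle C^{L}\rangle \trianglelefteq L$, pronormality of $C$ gives the factorization $L = N_{L}(C)\,N$: for $\ell \in L$ the subgroups $C$ and $C^{\ell}$ lie in $N$ and are conjugate in $\langle C, C^{\ell}\rangle \le N$, which pushes $\ell$ into $N_{L}(C)\,N$. Applying this to an $L$-conjugator $w$ of $B^{s}$ (so $(B^{s})^{w} = B^{g}$) writes $w = v m$ with $v \in N_{L}(C)$ and $m \in N$; since $((B^{s})^{v})^{m} = B^{g}$, if the $N$-part $m$ can be chosen to normalize $B^{g}$ then $(B^{s})^{v} = (B^{g})^{m^{-1}} = B^{g}$ and $v$ alone realizes the conjugacy. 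Arranging this absorption — exploiting the freedom to modify $w=vm$ by $N_{L}(C)\cap N$ — is precisely the step where $AB = BA$ must be used in an essential way, and it is the main obstacle of the proof. Granting it, $v\in N_L(C)$ completes the match $H^{s}\sim H^{g}$, hence $H \sim H^{g}$ in $L$; as $g$ was arbitrary, $H = AB$ is pronormal. The argument is symmetric in $A$ and $B$ and stays entirely inside the join $L$, the same localization that will later drive the closure of the family of pronormal subgroups under joins.
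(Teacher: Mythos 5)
Neither you nor the paper actually proves this lemma: the paper states it as a quoted result from \cite{DeGiovanniVincenzi2000} with no proof, so the only question is whether your argument stands on its own — and it does not, because its decisive step is assumed rather than proved. Everything you do carry out is correct but routine: $AB$ is a subgroup since $AB=BA$; pronormality passes to the intermediate subgroup $L=\langle H,H^{g}\rangle$; pronormality of $A$ and $B$ supplies $s,t\in L$ with $A^{s}=A^{g}$ and $B^{t}=B^{g}$; it would suffice to find $v\in N_{L}(C)$, $C:=A^{g}$, with $(B^{s})^{v}=B^{g}$; and pronormality of $C$ gives the Frattini-type factorization $L=N_{L}(C)\,N$ with $N=\langle C^{L}\rangle$. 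The gap is the ``absorption'' you then request and immediately grant: writing a conjugator $w$ (with $(B^{s})^{w}=B^{g}$) as $w=vm$, $v\in N_{L}(C)$, $m\in N$, you need $m$ to normalize $B^{g}$. The freedom you invoke is exactly replacement of $(v,m)$ by $(vd,\,d^{-1}m)$ with $d\in N_{L}(C)\cap N$, so what you need is precisely $m\in\bigl(N_{L}(C)\cap N\bigr)\,N_{N}(B^{g})$, and nothing in your setup — in particular not the hypothesis $AB=BA$, which so far has only been used to form the subgroup $H$ and to distribute conjugation over the factors — provides this. That containment is the entire content of the lemma, not a finishing touch.

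Two further signs that the missing step cannot be patched within your framework. First, your reduction target is possibly stronger than what the lemma requires: for $(H^{s})^{v}=H^{g}$ with $v\in N_{L}(C)$ one only needs the product identity $A^{g}(B^{s})^{v}=A^{g}B^{g}$, not the exact conjugacy $(B^{s})^{v}=B^{g}$; demanding the latter may fail even in situations where the lemma holds (e.g.\ when $A^{g}B^{s}=A^{g}B^{g}$ already but $B^{s}$ and $B^{g}$ are not $N_{L}(C)$-conjugate). Second, the symmetric repair attempt — conjugate the $B$-parts, then re-align the $A$-parts by another pronormality conjugation, and so on — is a ping-pong that has no decreasing quantity and never terminates. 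Published arguments for this statement use a genuinely different mechanism: for instance, with $M:=\langle A^{L}\rangle\trianglelefteq L$ one has $HM=BM$ and $H^{g}M=B^{g}M$ (here permutability and normality of $M$ enter essentially), the product of a pronormal subgroup with a normal subgroup is pronormal, and one then descends inside $H^{g}M$, where the two subgroups now share the factor $B^{g}$, and closes the argument by an induction/normal-closure descent rather than by adjusting a single factorization. So what you have is a correct reduction plus an unproven claim that coincides with the theorem itself; as written, it is not a proof.
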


In this paper, we also discuss Frobenius groups. The definitions and properties of Frobenius groups are known as follows.
\begin{defi}\label{defi:フロベニウス群の定義と同値条件}
A group $F$ is called a \emph{Frobenius group} if it satisfies either of the following equivalent conditions.
\begin{equation}\label{eq:frob-def-1}
\parbox{0.92\linewidth}{
There exists a subgroup $H$ with $\{\id\}\subsetneq H \subsetneq F$ such that $H \cap H^{g}=\{\id\}$ for every $g\in F\setminus H$.
}
\end{equation}
\begin{equation}\label{eq:frob-def-2}
\parbox{0.92\linewidth}{
There exist a normal subgroup $K\triangleleft F$ and a subgroup $H\subset F$ with $F=K\rtimes H$. The conjugation action of $H$ on $K$ is fixed-point-free on $K\setminus\{\id\}$. This means $C_{K}(h)=\{\id\}$ for every $h\in H\setminus\{\id\}$.
}
\end{equation}

In this situation $H$ is called the \emph{Frobenius complement} and $K$ is called the \emph{Frobenius kernel}. In particular one has the semidirect decomposition $F=K\rtimes H$.
\end{defi}

\begin{remark}\label{rem:フロベニウスの部分群はフロベニウス}
Let \(F=K\rtimes H\) be a Frobenius group as in Definition \eqref{eq:frob-def-1} and \eqref{eq:frob-def-2}. Every subgroup \(F'\subset F\) can be written with subgroups \(A\subset K\) and \(B\subset H\) in the form \(F'=A\rtimes B\). In particular, for every \(a\in A\setminus\{\id\}\) one has \(C_{B}(a)=\{\id\}\). Hence \(F'\) satisfies \eqref{eq:frob-def-2} and is itself a Frobenius group. In particular, when both \(A\) and \(B\) are nontrivial, the semidirect product structure of \(F'\) does not collapse to the direct product \(A\times B\).
\end{remark}

\begin{lem}[{\cite{Robinson1996}}]\label{lem:フロベニウス部分群の正規部分群の分類}
Let $F=K\rtimes H$ be a Frobenius group and let $L\subset F$. Then the normal subgroups of $F$ are exactly those of the following two forms. No other normal subgroups occur.
\begin{flalign}
& \text{$H$-invariant normal subgroups of $K$.} && \label{eq:FrobClass1}\\
& \text{Those containing $K$ and corresponding to normal subgroups of the complement $H$.} && \label{eq:FrobClass2}
\end{flalign}
We say that $L$ is $H$-invariant if $L^{h}=L$ for all $h\in H$.
\end{lem}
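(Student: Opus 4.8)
The plan is to reduce the classification to the single structural dichotomy that every normal subgroup $N\trianglelefteq F$ satisfies either $N\subseteq K$ or $K\subseteq N$; once this is in hand, the two families in \eqref{eq:FrobClass1} and \eqref{eq:FrobClass2} fall out immediately. Indeed, if $N\subseteq K$ then $N\trianglelefteq K$, and since $H\subseteq F$ normalizes $N$ we have $N^{h}=N$ for every $h\in H$, so $N$ is an $H$-invariant normal subgroup of $K$; conversely any such $N$ is normalized both by $K$ (as $N\trianglelefteq K$) and by $H$ (by $H$-invariance), hence by $F=\langle K,H\rangle$, giving exactly \eqref{eq:FrobClass1}. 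If instead $K\subseteq N$, the correspondence theorem identifies $N$ with the normal subgroup $N/K\trianglelefteq F/K\cong H$, and this correspondence is a bijection onto the normal subgroups of $H$, giving exactly \eqref{eq:FrobClass2}.

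The heart of the proof is therefore the dichotomy, and here I would exploit the fixed-point-free action \eqref{eq:frob-def-2}. Suppose $N\trianglelefteq F$ with $N\not\subseteq K$, and fix $n\in N\setminus K$. Consider the map $f\colon K\to K$ defined by $f(k)=n^{-1}k^{-1}nk$. Since $K\trianglelefteq F$, the conjugate $n^{-1}k^{-1}n$ lies in $K$, so $f(k)\in K$; and since $N\trianglelefteq F$ we have $k^{-1}nk\in N$, whence $f(k)=n^{-1}(k^{-1}nk)\in N$. Thus $f(K)\subseteq N\cap K$. The key point is that $f$ is injective: if $f(k_{1})=f(k_{2})$ then $k_{1}^{-1}nk_{1}=k_{2}^{-1}nk_{2}$, so $k_{1}k_{2}^{-1}$ commutes with $n$ and hence lies in $C_{K}(n)$, and I claim $C_{K}(n)=\{\id\}$. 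This is where the Frobenius hypothesis enters: every element of $F\setminus K$ is $F$-conjugate to an element of $H\setminus\{\id\}$, so writing $n=(h_{0})^{g}$ with $h_{0}\in H\setminus\{\id\}$ one obtains $C_{K}(n)=\bigl(C_{F}(h_{0})\cap K\bigr)^{g}=\bigl(C_{K}(h_{0})\bigr)^{g}=\{\id\}$, using that $K$ is normal and that the action of $H$ on $K$ is fixed-point-free. Hence $f$ is injective, and as $F$ (so also $K$) is finite, $f$ is a bijection. Therefore $K=f(K)\subseteq N$, which is precisely the conclusion $K\subseteq N$.

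Combining the dichotomy with the two converse verifications above completes the classification and shows that no normal subgroups other than those in \eqref{eq:FrobClass1} and \eqref{eq:FrobClass2} occur.

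The main obstacle I anticipate is the injectivity step, for two reasons. First, when $K$ is non-abelian the map $f$ is not a homomorphism, so bijectivity cannot be read off from a kernel computation and must instead be deduced from injectivity together with finiteness, as above. Second, everything hinges on the fact that $C_{K}(n)=\{\id\}$ for $n\notin K$, which in turn rests on the standard but nontrivial Frobenius fact that each element outside the kernel is conjugate into the complement; making this precise, rather than assuming it, is the one external input I would need to pin down carefully.
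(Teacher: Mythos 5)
Your proof is correct, but there is no in-paper argument to compare it against: the paper states this lemma as a known result imported from \cite{Robinson1996} and gives no proof of it. Judged on its own, your argument is sound and, in fact, follows the standard route. The reduction to the dichotomy (every $N\trianglelefteq F$ satisfies $N\subseteq K$ or $K\subseteq N$) is exactly the right skeleton: when $N\subseteq K$, normality in $F$ is equivalent to being normalized by both $K$ and $H$ since $F=KH$, which is \eqref{eq:FrobClass1}; when $K\subseteq N$, the correspondence theorem for $F/K\cong H$ gives \eqref{eq:FrobClass2}. Your commutator map $f(k)=n^{-1}k^{-1}nk$ does land in $N\cap K$ (using $K\trianglelefteq F$ for membership in $K$ and $N\trianglelefteq F$ for membership in $N$), injectivity correctly reduces to $C_{K}(n)=\{\id\}$, and finiteness then forces $f(K)=K\subseteq N$.

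The one genuine external input, which you flag honestly --- that every element of $F\setminus K$ is conjugate to an element of $H\setminus\{\id\}$ --- can be closed off by the very same injectivity-plus-finiteness trick, so your proof can be made fully self-contained relative to condition \eqref{eq:frob-def-2}. Indeed, fix $h\in H\setminus\{\id\}$ and consider $\phi\colon K\to K$, $\phi(x)=xhx^{-1}h^{-1}$ (this lies in $K$ because $K\trianglelefteq F$). If $\phi(x)=\phi(y)$ then $xhx^{-1}=yhy^{-1}$, so $y^{-1}x$ centralizes... more precisely $(y^{-1}x)h(y^{-1}x)^{-1}=h$, hence $y^{-1}x\in C_{K}(h)=\{\id\}$ by \eqref{eq:frob-def-2}, so $\phi$ is injective, hence bijective by finiteness. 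Given any element of $F\setminus K$, write it as $kh$ with $k\in K$ and $h\in H\setminus\{\id\}$; choosing $x$ with $\phi(x)=k$ yields $kh=xhx^{-1}$, i.e.\ $x^{-1}(kh)x=h$, which is precisely the conjugacy fact you invoke. One last minor caveat: you use finiteness of $F$ twice, and the paper's definition of Frobenius group does not state finiteness explicitly; this is harmless here, since every group considered in the paper is finite (and the equivalence of \eqref{eq:frob-def-1} and \eqref{eq:frob-def-2} already presupposes it), but you should state the assumption.
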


\section{Pronormal Subgroups and Lattice Structure of $\bm{\PSL(2,q)}$}

This section concerns the finite simple group \(\PSL(2,q)\).
\begin{defi}
We use the following notation throughout this section.
{\allowdisplaybreaks
\begin{flalign*}
& \btri\; G := \PSL(2,q),\, q:=p^{n},\, o:=\gcd(q-1,2), &&\\
& \btri\; \PrN(G) := \{\,H \subset G \mid H \text{ is pronormal in } G\,\}, &&\\
& \btri\; v_{p'}(|G|): \text{the } p'\text{-adic valuation of } |G|=q(q-1)(q+1)/o.
\end{flalign*}
}
\end{defi}

We write \(\mathcal{Q}_{\PH}\) for the set of all parameters \(q\) such that \(G\) is prohamiltonian.
We write \(\mathcal{Q}_{\NPr}\) for the set of all parameters \(q\) such that \(G\) is an NPr-group.
We define the following subsets of \(\mathbb{Z}\) on the basis of Theorems~\ref{thm:プロハミルトン群分類} and \ref{thm:非冪零ならば準正規な単純群分類}.
\[
\mathcal{Q}_{2}=\{\,2^{n}\mid n \text{ is prime}\,\},\, 
\mathcal{Q}_{3}=\{\,3^{n}\mid n \text{ is an odd prime}\,\},\, \mathcal{P}_{\pm 3}=\{\,p\in\mathbb{Z}\mid p \text{ is prime},\ p\equiv \pm 3 \pmod{8},\ p>17\,\},
\]
\[\mathcal{E}=\{7,17\},\,
\mathcal{P}_{\pm 1}^{(2)}=\{\,p\in\mathbb{Z}\mid p \text{ is prime},\ p\equiv \pm 1 \pmod{8},\ \text{and } p-1 \text{ or } p+1 \text{ is a power of } 2,\, p\neq 7,17\}.
\]
\begin{alignat}{2}
\label{eq:PHとなるq全体}
& \mathcal{Q}_{\PH}  &\;=&\ \mathcal{Q}_{2}\ \cup\ \mathcal{Q}_{3}\ \cup\ \mathcal{E}\ \cup\ \mathcal{P}_{\pm 3}.\\
\label{eq:NPrとなるq全体}
& \mathcal{Q}_{\NPr} &\;=&\ \mathcal{Q}_{2}\ \cup\ \mathcal{Q}_{3}\ \cup\ \mathcal{P}_{\pm 1}^{(2)}.
\end{alignat}

We proceed according to this case division.

\subsection{Preliminaries for the Classification of Pronormal Subgroups in $\bm{\PSL(2,q)}$}

Regardless of the conditions on $q$, the subgroups of $G$ are classified as follows.

\begin{thm}[\cite{Dickson1958}]\label{thm:dicsonのPSL部分群分類定理}
Let $q:=p^{n}$. The subgroups of $G$ are precisely the following groups. 
{\allowdisplaybreaks
\begin{flalign}
& \btri\; \text{The dihedral group $D_{2d}$ of order $2d$ where $d\mid \tfrac{q\pm 1}{o}$ $(D_{2}\simeq \mathbb{Z}_{2}$ and $D_{4}\simeq (\mathbb{Z}_{2})^{2})$,} && \label{eq:dihedral}\\
& \btri\; \text{The cyclic group $\mathbb{Z}_{d}$ where $d\mid \tfrac{q\pm 1}{o}$,} && \label{eq:cyclic}\\
& \btri\; \text{$(\mathbb{Z}_{p})^{k}\rtimes \mathbb{Z}_{j}$, where $k\le n$, $j\mid p^{k}-1$, $j\mid \tfrac{q-1}{o}$,} && \label{eq:affine}\\
& \btri\; \text{$A_{4}$, except if $q=2^{e}$ with $e$ odd,} && \label{eq:A4}\\
& \btri\; \text{$S_{4}$, if $q\equiv \pm 1 \pmod {8}$,} && \label{eq:S4}\\
& \btri\; \text{$A_{5}$, except if $q\equiv \pm 2\pmod {5}$,} && \label{eq:A5}\\
& \btri\; \text{$\operatorname{PSL}(2,r)$, where $r$ is a power of $p$ such that $r^{m}=q$,} && \label{eq:PSL2r}\\
& \btri\; \text{$\operatorname{PGL}(2,r)$, where $r$ is a power of $p$ such that $r^{2m}=q$.} && \label{eq:PGL2r}
\end{flalign}
}
\end{thm}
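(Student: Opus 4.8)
The plan is to fix a finite subgroup $H\subset G=\PSL(2,q)$ and study it through the faithful $2$-transitive action of $G$ on the projective line $\mathbb{P}^{1}(\GF(q))$, which has $q+1$ points. The governing dichotomy is whether the characteristic $p=\operatorname{char}\GF(q)$ divides $\abs{H}$, so first I would record the element trichotomy in $G$: every nontrivial element is either \emph{unipotent} of order $p$ with exactly one fixed point, \emph{split semisimple} of order dividing $\tfrac{q-1}{o}$ with two fixed points, or \emph{nonsplit semisimple} of order dividing $\tfrac{q+1}{o}$ with no fixed point on the line. Correspondingly, a Sylow $p$-subgroup $U$ is elementary abelian of order $q$ and forms a trivial-intersection set, its normalizer is the Borel (point-stabilizer) subgroup $B\cong(\mathbb{Z}_{p})^{n}\rtimes\mathbb{Z}_{(q-1)/o}$ (a Frobenius group), and the two classes of maximal cyclic tori, of orders $\tfrac{q-1}{o}$ and $\tfrac{q+1}{o}$, are self-centralizing with dihedral normalizers.

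If $p\nmid\abs{H}$, then $H$ is a $p'$-group all of whose elements are semisimple, and I would reduce to the classical Klein classification. Mirroring the determination of the finite subgroups of $\operatorname{PGL}(2,\mathbb{C})$, one uses that centralizers of semisimple elements are cyclic, that distinct maximal tori meet trivially, and an orbit-counting argument for the fixed points on $\mathbb{P}^{1}(\GF(q))$ to force $H$ to be cyclic $\mathbb{Z}_{d}$, dihedral $D_{2d}$, or one of the polyhedral groups $A_{4}$, $S_{4}$, $A_{5}$. The congruence side-conditions then emerge from testing which small element orders are realized in $G$: an element of order $4$ (needed for $S_{4}$) exists precisely when $q\equiv\pm1\pmod 8$; an element of order $5$ (needed for $A_{5}$) exists precisely when $q\not\equiv\pm2\pmod 5$; and an order-$3$ element normalizing a unipotent Klein four-group, which is what the embedding of $A_{4}$ requires in characteristic $2$, fails to exist exactly when $q=2^{e}$ with $e$ odd.

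If $p\mid\abs{H}$, then $H$ contains a unipotent element, so it meets some Sylow $p$-subgroup nontrivially, and I would split on the number of Sylow $p$-subgroups of $H$. If this number is one, then $H$ normalizes its unique unipotent subgroup, hence fixes a point of the line and sits inside a Borel $B$; since every subgroup of the Frobenius group $B$ again has the shape $A\rtimes C$ by Remark~\ref{rem:フロベニウスの部分群はフロベニウス}, intersecting with the layers of $B$ yields precisely the affine type $(\mathbb{Z}_{p})^{k}\rtimes\mathbb{Z}_{j}$ with $k\le n$, $j\mid p^{k}-1$, and $j\mid\tfrac{q-1}{o}$. If instead $H$ has at least two Sylow $p$-subgroups, then it contains unipotent elements with distinct fixed points, so no point of the line is $H$-invariant, and the subgroup generated by these unipotents is forced to be a full rank-one group over a subfield; this is the case producing $\PSL(2,r)$ and $\operatorname{PGL}(2,r)$ with $r$ a power of $p$ and $r^{m}=q$ (respectively $r^{2m}=q$).

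I expect this last sub-case to be the main obstacle. The key is a field-generation argument: the matrix entries occurring in the elements of $H$ generate a subfield $\GF(r)\subset\GF(q)$, and one must prove that $H$ exhausts all of $\PSL(2,r)$, or the degree-two overgroup giving $\operatorname{PGL}(2,r)$, rather than a proper intermediate subgroup. Controlling the index via order estimates on unipotent subgroups and tori over $\GF(r)$, together with the simplicity of $\PSL(2,r)$ for $r>3$, is the delicate part, and the small characteristics $p=2,3$ and the degenerate values $r\in\{2,3\}$ (where $\PSL(2,r)$ fails to be simple and coincides with $S_{3}$ or $A_{4}$) must be dispatched by hand. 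The rest of the classification is then a matter of assembling these strata — the $p'$-subgroups, the affine subgroups, the subfield subgroups, and the genuinely exceptional $A_{4},S_{4},A_{5}$ — and verifying that the listed divisibility and congruence constraints are exactly the realizability conditions.
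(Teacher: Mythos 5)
The paper offers no proof of this theorem: it is quoted from Dickson's book as a classical result, so there is no internal argument to compare yours against, and I review your outline on its own merits. Your overall plan is the classical one --- the action on $\mathbb{P}^{1}(\GF(q))$, the dichotomy on whether $p$ divides $\abs{H}$, Borel subgroups when the Sylow $p$-subgroup of $H$ is normal --- and the first two branches are sound in outline.

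The genuine gap is in the third branch. You claim that if $H$ has at least two Sylow $p$-subgroups, then the subgroup generated by its unipotent elements is \emph{forced} to be a rank-one group over a subfield, so that $H$ is $\PSL(2,r)$ or $\operatorname{PGL}(2,r)$. That is false, and the list you are trying to prove already shows it. In characteristic $2$, a dihedral subgroup $D_{2d}$ with $d>1$ odd --- for instance $D_{14}\subset\PSL(2,8)$, the normalizer of a split torus --- has $d$ non-normal Sylow $2$-subgroups and is generated by its involutions, which are unipotent; yet it is not $\PSL(2,r)$ or $\operatorname{PGL}(2,r)$ for any power $r$ of $2$. In characteristic $3$, $A_{5}\subset\PSL(2,9)\simeq A_{6}$ has ten non-normal Sylow $3$-subgroups and is generated by its elements of order $3$, yet it is not $\PSL(2,3^{k})$ or $\operatorname{PGL}(2,3^{k})$ for any $k$. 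These groups cannot be recovered in your first branch either, since $p$ divides their orders, and the strata you assemble at the end ($p'$-subgroups, affine subgroups, subfield subgroups, and the polyhedral groups realized as $p'$-groups) simply omit them; as organized, your argument would ``prove'' that $\PSL(2,8)$ has no subgroup isomorphic to $D_{14}$. The classical proofs (Dickson; Huppert, \emph{Endliche Gruppen} I, Hauptsatz II.8.27; Suzuki) repair exactly this point by splitting the non-normal-Sylow case according to whether a Sylow $p$-subgroup of $H$ has order $p$ or at least $p^{2}$: only in the latter case does the field-generation argument force a subfield subgroup, while the order-$p$ case additionally produces the dihedral groups (for $p=2$), the groups $A_{4}$, $S_{4}$, $A_{5}$ (for $p\le 5$), and $\PSL(2,p)$. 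Your sketch needs this finer case division before the generation argument you describe can be run.
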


\begin{prop}\label{prop:PSLで準正規性を求めるべき部分群}
Let $q=p^{n}$ and $o:=\gcd(q-1,2)$. Then the families of subgroups that may still require verification of pronormality reduce as follows.

\smallskip
\noindent\textup{(PH)} If $q$ satisfies \eqref{eq:PHとなるq全体}, then every $H\subset G$ is pronormal except possibly
\begin{equation}\label{eq:プロハミルトン条件q-EN}
\begin{gathered}
(\mathbb{Z}_{p})^{j}\ (1\le j< n),\qquad
\mathbb{Z}_{d}\ \bigl(d\mid \tfrac{q\pm 1}{o}\bigr),\qquad
(\mathbb{Z}_{2})^{2}\ \ (q\ \text{odd}).
\end{gathered}
\end{equation}

\noindent\textup{(NPr)} If $q$ satisfies \eqref{eq:NPrとなるq全体}, then every $H\subset G$ is pronormal except possibly
\begin{equation}\label{eq:NPr条件q-EN}
\begin{gathered}
(\mathbb{Z}_{p})^{j}\ (1\le j< n),\qquad
\mathbb{Z}_{d}\ \bigl(d\mid \tfrac{q\pm 1}{o}\bigr),\qquad
D_{2^{\,j}}\ \ (1\le j\le v_{2}(|G|)).
\end{gathered}
\end{equation}
\end{prop}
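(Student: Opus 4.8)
The plan is to read off the answer directly from Dickson's classification (Theorem~\ref{thm:dicsonのPSL部分群分類定理}), sorting every subgroup family into those that the standing hypothesis already forces to be pronormal and those that survive as genuine candidates for non-pronormality. The engine is the definition of the two hypotheses: $G$ prohamiltonian means every non-abelian subgroup is pronormal, while $G$ being an NPr-group means every non-nilpotent subgroup is pronormal. Thus in each part the work splits into (i) deciding, family by family, which members are abelian (resp.\ nilpotent), and (ii) deleting from the remaining list those subgroups that are pronormal for an independent reason supplied by Lemma~\ref{lem:準正規部分群のクラス}.

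First I would clear away the generic families. The dihedral groups $D_{2d}$ with $d\ge 3$, the affine groups $(\mathbb{Z}_p)^k\rtimes\mathbb{Z}_j$ with $j\ge 2$, and each of $A_4$, $S_4$, $A_5$, $\PSL(2,r)$, $\operatorname{PGL}(2,r)$ are non-abelian, so under (PH) they are immediately pronormal. For (NPr) I would strengthen this to non-nilpotency: the affine group with $j\ge 2$ is a nontrivial Frobenius group, since $\mathbb{Z}_j$ acts fixed-point-freely by field multiplication on $(\mathbb{Z}_p)^k$ (Definition~\ref{defi:フロベニウス群の定義と同値条件}), and Frobenius groups are never nilpotent; the dihedral group $D_{2d}$ is nilpotent precisely when $d$ is a power of $2$; and $A_4,S_4,A_5,\PSL(2,r),\operatorname{PGL}(2,r)$ are patently non-nilpotent. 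Hence under (NPr) every $D_{2d}$ with $d$ not a $2$-power, every affine group with $j\ge 2$, and all exceptional and subfield subgroups are pronormal as well.

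What is left in both parts is the abelian (resp.\ nilpotent) remainder: the cyclic groups $\mathbb{Z}_d$ with $d\mid\frac{q\pm1}{o}$ and the elementary abelian groups $(\mathbb{Z}_p)^k$ with $k\le n$, together with the dihedral $2$-groups $D_{2^j}$ in the (NPr) case. I would then prune this remainder with the Sylow pronormality of Lemma~\ref{lem:準正規部分群のクラス}: the full elementary abelian group $(\mathbb{Z}_p)^n$ is the Sylow $p$-subgroup of $G$, hence pronormal, which removes it and leaves only the proper subgroups $(\mathbb{Z}_p)^j$ with $1\le j<n$. Collecting the survivors yields exactly the lists \eqref{eq:プロハミルトン条件q-EN} and \eqref{eq:NPr条件q-EN}.

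The one delicate point is the bookkeeping of the smallest $2$-subgroups, because $D_2\simeq\mathbb{Z}_2$ and $D_4\simeq(\mathbb{Z}_2)^2$ lie at the overlap of the dihedral, cyclic, and elementary abelian families and behave according to the parity of $q$. When $q$ is even, $(\mathbb{Z}_2)^2$ is a genuine elementary abelian $p$-subgroup already contained in the $(\mathbb{Z}_p)^j$ family, and the odd divisors $d\mid q\pm1$ leave no dihedral $2$-subgroup beyond $\mathbb{Z}_2$; when $q$ is odd, $(\mathbb{Z}_2)^2$ is a Klein four-group that is neither a $p$-subgroup nor Sylow, so in (PH) it must be listed on its own, whereas in (NPr) it is simply the member $D_{2^2}$ of the chain of dihedral $2$-subgroups $D_{2^j}$ ascending to the dihedral Sylow $2$-subgroup (so $1\le j\le v_2(|G|)$). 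I expect this parity-sensitive matching—and the routine confirmation that Dickson's list contains no further abelian or nilpotent type—to be the main obstacle; once it is settled, the two statements follow at once from the definitions together with Lemma~\ref{lem:準正規部分群のクラス}.
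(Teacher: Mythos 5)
Your proposal is correct and takes essentially the same route as the paper: Dickson's classification (Theorem~\ref{thm:dicsonのPSL部分群分類定理}) to enumerate all subgroup types, the defining property of prohamiltonian (resp.\ NPr) groups to discharge the non-abelian (resp.\ non-nilpotent) families, and Sylow pronormality (Lemma~\ref{lem:準正規部分群のクラス}) to strike $(\mathbb{Z}_p)^n$, leaving exactly the stated candidate lists. One small factual slip: your claim that for odd $q$ the Klein four-group is ``neither a $p$-subgroup nor Sylow'' fails when $q\equiv\pm3\pmod 8$, where $(\mathbb{Z}_2)^2$ \emph{is} the Sylow $2$-subgroup (Lemma~\ref{lem:PSLのシローp部分群とその正規化群}); this is harmless, however, because the proposition only asserts an ``except possibly'' list, so retaining $(\mathbb{Z}_2)^2$ on it requires no justification.
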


\begin{proof}
We treat the two cases separately.

\smallskip
\noindent\textbf{(PH).}
Assume $q$ satisfies \eqref{eq:PHとなるq全体}.
By Theorem~\ref{thm:プロハミルトン群分類}, every non-abelian subgroup of $G$ is pronormal. 
By Theorem~\ref{thm:dicsonのPSL部分群分類定理}, every abelian subgroup of $G$ has one of the following three forms:
\[
(\mathbb{Z}_{p})^{k}\ (0\le k\le n),\qquad 
\mathbb{Z}_{d}\ \bigl(d\mid \tfrac{q\pm 1}{o}\bigr),\qquad
(\mathbb{Z}_{2})^{2}\ \bigl(2\mid \tfrac{q\pm 1}{o}\bigr).
\]
The Sylow $p$-subgroup $(\mathbb{Z}_{p})^{n}$ is pronormal by Lemma~\ref{lem:準正規部分群のクラス}, hence only $1\le k<n$ may remain.
If $q$ is even, $(\mathbb{Z}_{2})^{2}$ is absorbed by $(\mathbb{Z}_{p})^{k}$ with $p=2$ and some $k\ge 2$.
If $q$ is odd, one of $q-1$ and $q+1$ is divisible by $4$, so $(\mathbb{Z}_{2})^{2}$ always occurs in $G$ and must be kept.
This is exactly the list in \eqref{eq:プロハミルトン条件q-EN}.

\smallskip
\noindent\textbf{(NPr).}
Assume $q$ satisfies \eqref{eq:NPrとなるq全体}.
Here $q$ is odd, so we fix $o:=\gcd(q-1,2)=2$.
By Theorem~\ref{thm:非冪零ならば準正規な単純群分類}, every non-nilpotent subgroup of $G$ is pronormal.
Therefore it suffices to list the nilpotent subgroups that can occur.
They are precisely the abelian ones already appearing in the (PH) case together with the $2$-groups of dihedral type.
We write these as $D_{2^{\,j}}$ with the convention $D_{2}\simeq \mathbb{Z}_{2}$ and $D_{4}\simeq (\mathbb{Z}_{2})^{2}$.

Such a subgroup occurs if and only if
$
 2^{j-1} | \frac{q\pm 1}{o}\,,
$
equivalently
$
 j \le v_{2}(q\pm 1).
$ Hence
$
 j_{\max} = \max\{v_{2}(q-1), v_{2}(q+1)\}.
$
Since $q$ is odd, one has $\min\{v_{2}(q-1),\,v_{2}(q+1)\}=1$, and therefore
$
 v_{2}(|G|) = v_{2}\!\left(\frac{q(q-1)(q+1)}{2}\right)
  = v_{2}(q-1)+v_{2}(q+1)-1.
$
Combining these equalities yields
\[
 j_{\max}
  = \max\{v_{2}(q-1),\,v_{2}(q+1)\}
  = v_{2}(q-1)+v_{2}(q+1)-1
  = v_{2}(|G|).
\]
\end{proof}

\begin{lem}[\cite{wilson2009}]\label{lem:PSLのシローp部分群とその正規化群}
Let $q:=p^{n}$ and put $o:=\gcd(q-1,2)$. For each prime $p'$ dividing $|G|$, choose $S\in\mathrm{Syl}_{p'}(G)$. Then $S$ and $N_{G}(S)$ are as follows.
{\allowdisplaybreaks
\begin{flalign}
& \btri\ \text{When } p'=2 \text{ and } p=2\text{: } 
   S \simeq (\mathbb{Z}_{2})^{n},\ 
   N_{G}(S) \simeq (\mathbb{Z}_{2})^{n} \rtimes \mathbb{Z}_{2^{n}-1}. && \label{eq:case-a}\\
& \btri\ \text{When } p'=2 \text{ and } q \equiv \pm 1 \pmod{8}\text{: } 
   S \simeq D_{2^{j}} \text{ and } N_{G}(S)=S \text{ with } j=v_{2}(|G|). && \label{eq:case-b}\\
& \btri\ \text{When } p'=2 \text{ and } q \equiv \pm 3 \pmod{8}\text{: }
   S \simeq V_{4},\ N_{G}(S) \simeq A_{4}. && \label{eq:case-c}\\
& \btri\ \text{When } p'=p\text{: } 
   S \simeq (\mathbb{Z}_{p})^{n},\ 
   N_{G}(S) \simeq (\mathbb{Z}_{p})^{n} \rtimes \mathbb{Z}_{(q-1)/o}. && \label{eq:case-2}\\
& \btri\ \text{When } p'\ne p,2 \text{ and } p' \mid \tfrac{q\pm 1}{o}\text{: } 
   S \simeq \mathbb{Z}_{p'^{\,j}} \text{ with } j=v_{p'}(|G|),\ 
   N_{G}(S) \simeq D_{2(q\pm 1)/o}. && \label{eq:case-3}
\end{flalign}
}
\end{lem}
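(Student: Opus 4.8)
The plan is to read off, for each prime $p'$ dividing $|G|=q(q-1)(q+1)/o$, the exact power $v_{p'}(|G|)$ and then to locate a concrete Sylow $p'$-subgroup inside the matrix model of $\PSL(2,q)$, using Dickson's list (Theorem~\ref{thm:dicsonのPSL部分群分類定理}) to pin down its normalizer. First I would separate the defining prime $p'=p$ from the non-defining primes, and among the latter treat $p'=2$ (for odd $q$) separately, since the Sylow $2$-structure is the only genuinely delicate point.

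For the defining characteristic I would work in $\mathrm{SL}(2,q)$ and pass to the quotient. Since $p\nmid q^{2}-1$, one has $v_{p}(|G|)=n$, so a subgroup of order $q=p^{n}$ is Sylow. The image $U$ of the upper unitriangular matrices is isomorphic to the additive group $(\GF(q),+)\cong(\mathbb{Z}_{p})^{n}$ and has order $q$; hence $U$ is a Sylow $p$-subgroup. Its normalizer is the image of the Borel subgroup of upper triangular matrices, which factors as $U\rtimes T$ with $T$ the diagonal torus; the diagonal torus has order $q-1$ in $\mathrm{SL}(2,q)$ and order $(q-1)/o$ after passing to $\PSL$, giving $N_{G}(U)\simeq(\mathbb{Z}_{p})^{n}\rtimes\mathbb{Z}_{(q-1)/o}$. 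This is case~\eqref{eq:case-2}, and case~\eqref{eq:case-a} is exactly the specialization $p=2$, $o=1$. For a non-defining odd prime $p'$ I would use that $\gcd(q-1,q+1)\mid 2$, so $p'$ divides exactly one of $(q-1)/o$ and $(q+1)/o$; accordingly a Sylow $p'$-subgroup sits inside the cyclic maximal torus $\mathbb{Z}_{(q-1)/o}$ or $\mathbb{Z}_{(q+1)/o}$ and is itself cyclic of order $p'^{\,v_{p'}(|G|)}$. Because a cyclic group has a unique, hence characteristic, subgroup of each order, the normalizer of this Sylow subgroup coincides with the normalizer of the ambient torus; the latter is the dihedral group $D_{2(q\pm1)/o}$, the torus extended by a Weyl-type inverting involution. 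This yields case~\eqref{eq:case-3}.

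The main obstacle is the prime $2$ for odd $q$, where $o=2$ and $v_{2}(|G|)=v_{2}(q-1)+v_{2}(q+1)-1$. Since $q$ is odd, $\min\{v_{2}(q-1),v_{2}(q+1)\}=1$, so the two subcases are governed by $q\bmod 8$. When $q\equiv\pm3\pmod 8$ one gets $v_{2}(|G|)=2$, so $S$ has order $4$, and Dickson's list forces $S\simeq V_{4}$ with the smallest overgroup normalizing it being $A_{4}$, giving $N_{G}(S)\simeq A_{4}$, which is case~\eqref{eq:case-c}. When $q\equiv\pm1\pmod 8$ one gets $v_{2}(|G|)=j\ge 3$; here I would invoke the standard fact, extractable from Theorem~\ref{thm:dicsonのPSL部分群分類定理}, that the Sylow $2$-subgroups of $\PSL(2,q)$ are dihedral, so $S\simeq D_{2^{j}}$, and then verify that such a dihedral $2$-group of order at least $8$ is self-normalizing, i.e. $N_{G}(S)=S$, which is case~\eqref{eq:case-b}.

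I expect the normalizer computation for $p'=2$ to be the crux. The self-normalizing claim in case~\eqref{eq:case-b} would proceed by noting that $\Aut(D_{2^{j}})$ is a $2$-group for $j\ge 3$, so $N_{G}(S)/C_{G}(S)$ is a $2$-group; since $S$ is already Sylow in $N_{G}(S)$, one must rule out an odd-order complement, which forces an analysis of $C_{G}(S)$ via the centralizer structure of involutions in $\PSL(2,q)$. Dually, in case~\eqref{eq:case-c} one must show no overgroup larger than $A_{4}$ normalizes $V_{4}$, excluding $S_{4}$ precisely when $q\equiv\pm3\pmod 8$. Both points rely on the exhaustiveness of Dickson's classification together with the finer congruence bookkeeping on $v_{2}(q\mp1)$, and this is where the bulk of the verification sits; the defining-characteristic and odd non-defining cases are then routine.
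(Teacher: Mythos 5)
The paper itself offers no proof of this lemma: it is imported as a known result from \cite{wilson2009}, so there is no in-paper argument to compare yours against. Your sketch is the standard textbook derivation (Borel subgroup in the defining characteristic, torus normalizers for odd non-defining primes, congruence bookkeeping modulo $8$ for the prime $2$), and its overall architecture is sound. Cases \eqref{eq:case-a}, \eqref{eq:case-2} and \eqref{eq:case-c} are handled correctly --- in particular your observation that $S_{4}$ is excluded in case \eqref{eq:case-c} simply because its Sylow $2$-subgroup $D_{8}$ would exceed the Sylow $2$-order of $G$ is the right short argument --- and for case \eqref{eq:case-b} you correctly identify, though defer, the two facts that carry the self-normalization: $\Aut(D_{2^{j}})$ is a $2$-group for $j\ge 3$, and $C_{G}(S)=Z(S)$, which follows from the dihedral structure of involution centralizers in $\PSL(2,q)$.

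One step, as written, does not prove what it claims. In case \eqref{eq:case-3} you argue that since the Sylow $p'$-subgroup $S$ is the unique, hence characteristic, subgroup of its order in the ambient torus $C\simeq \mathbb{Z}_{(q\pm1)/o}$, the normalizer of $S$ ``coincides'' with the normalizer of $C$. Characteristicity only yields one inclusion, $N_{G}(C)\subseteq N_{G}(S)$: whatever normalizes $C$ normalizes $S$. The inclusion you actually need, $N_{G}(S)\subseteq N_{G}(C)$, does not follow from uniqueness of subgroup orders in $C$; a priori $N_{G}(S)$ could be strictly larger than $D_{2(q\pm1)/o}$. The standard repair is to identify $C_{G}(S)$ with the torus: since $p'$ is odd, $|S|>2$, so by Fact~\ref{fac:巡回部分群の中心化群も巡回部分群} the centralizer $C_{G}(S)$ is cyclic; it contains $C$, and since $\gcd\bigl(\tfrac{q-1}{o},\tfrac{q+1}{o}\bigr)\le 2<|C|$, Dickson's list of cyclic subgroups forces $C_{G}(S)=C$. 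Then $N_{G}(S)$ normalizes $C_{G}(S)=C$, giving the missing inclusion and hence $N_{G}(S)=N_{G}(C)=D_{2(q\pm1)/o}$. With that inclusion supplied, and the deferred computations in case \eqref{eq:case-b} carried out, your outline becomes a complete proof.
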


By Lemma \ref{lem:PSLのシローp部分群とその正規化群}, when $p'=p$ the Sylow $p$-subgroup $S$ is elementary abelian and
\[
N_{G}(S)\simeq S\rtimes C \quad \text{with} \quad C\simeq \mathbb{Z}_{(q-1)/o}.
\]
The action of $C$ on $S$ is fixed–point–free on $S\setminus\{\id\}$. Therefore $N_{G}(S)$ is a Frobenius group whose Frobenius kernel is $S$ and whose Frobenius complement is $C$.

\begin{lem}[\cite{lidl1997,wilson2009}]\label{lem:規約性}
Let $q=p^{n}$. Let $S$ be a Sylow $p$-subgroup of $G$; then $S\simeq (\mathbb{Z}_{p})^{n}$, which we identify with the additive group of $\mathbb{F}_{q}$. Let $o:=\gcd(q-1,2)$ and let $C\subset N_{G}(S)$ be the cyclic complement with
$
N_{G}(S)=S\rtimes C$ and $C\simeq \mathbb{Z}_{(q-1)/o}
$.
Then $S$ is an irreducible $\mathbb{F}_{p}C$-module. Equivalently, the conjugation action of $C$ on $S$ is $\mathbb{F}_{p}$-linear and admits no nontrivial proper $C$-invariant $\mathbb{F}_{p}$-subspace of $S$.
\end{lem}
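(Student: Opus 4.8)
The plan is to realize the action concretely inside $\mathrm{SL}(2,q)$ and then reduce the irreducibility claim to a statement about subfields of $\mathbb{F}_q$. First I would take $S$ to be the image in $G=\PSL(2,q)$ of the unipotent subgroup $\left\{\left(\begin{smallmatrix}1 & t\\0 & 1\end{smallmatrix}\right):t\in\mathbb{F}_q\right\}$, so that the additive identification $S\simeq(\mathbb{F}_q,+)\simeq(\mathbb{Z}_p)^{n}$ from Lemma~\ref{lem:PSLのシローp部分群とその正規化群} is explicit, and $C$ to be the image of the diagonal torus $\left\{\left(\begin{smallmatrix}a & 0\\0 & a^{-1}\end{smallmatrix}\right):a\in\mathbb{F}_q^{\times}\right\}$. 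A direct computation gives $\left(\begin{smallmatrix}a & 0\\0 & a^{-1}\end{smallmatrix}\right)\left(\begin{smallmatrix}1 & t\\0 & 1\end{smallmatrix}\right)\left(\begin{smallmatrix}a^{-1} & 0\\0 & a\end{smallmatrix}\right)=\left(\begin{smallmatrix}1 & a^{2}t\\0 & 1\end{smallmatrix}\right)$, so the conjugation action of $C$ on $S\simeq\mathbb{F}_q$ is multiplication by $a^{2}$. As $a$ runs over $\mathbb{F}_q^{\times}$ the multipliers $a^{2}$ fill out the subgroup $M\le\mathbb{F}_q^{\times}$ of squares: when $p=2$ the Frobenius map makes $M=\mathbb{F}_q^{\times}$, and when $q$ is odd $M$ is the index-two subgroup of squares. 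In both cases $|M|=(q-1)/o$, the kernel of the action is exactly the center, and $C\simeq M$ acts faithfully. The $\mathbb{F}_p$-linearity is automatic, since multiplication by a fixed element of $\mathbb{F}_q$ is $\mathbb{F}_p$-linear.

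Next I would reduce irreducibility to a field-generation statement. A nonzero $\mathbb{F}_pC$-submodule of $S$ is an $\mathbb{F}_p$-subspace $W\subseteq\mathbb{F}_q$ stable under multiplication by every element of $M$. Let $K:=\mathbb{F}_p[M]$ be the $\mathbb{F}_p$-linear span of $M$ inside $\mathbb{F}_q$. Because $M$ is closed under multiplication and contains $1$, this span is a subring of $\mathbb{F}_q$; being a finite integral domain it is a subfield. Any $M$-stable subspace $W$ is then automatically a $K$-subspace, since for $k=\sum c_i m_i\in K$ and $w\in W$ one has $kw=\sum c_i(m_i w)\in W$, and conversely every $K$-subspace is $M$-stable. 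Hence the $\mathbb{F}_pC$-submodules of $\mathbb{F}_q$ are precisely the $K$-subspaces, and the module is irreducible if and only if $K=\mathbb{F}_q$, i.e.\ $\mathbb{F}_q$ is one-dimensional over $K$.

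Finally I would establish $K=\mathbb{F}_q$ by an order estimate. If $K\neq\mathbb{F}_q$, then $K=\mathbb{F}_{p^{d}}$ for some proper divisor $d\mid n$ with $d<n$, whence $M\subseteq\mathbb{F}_{p^{d}}^{\times}$ and $(q-1)/o=|M|\le p^{d}-1$. For $o=1$ this fails outright, as $|M|=q-1>p^{d}-1$. For $o=2$ it yields $p^{n}-1\le 2(p^{d}-1)$, hence $p^{n}\le 2p^{d}-1<2p^{d}$ and $p^{\,n-d}<2$, which is impossible for $p\ge 2$ and $n>d$. Therefore $K=\mathbb{F}_q$ and $S$ is an irreducible $\mathbb{F}_pC$-module. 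I expect the main work to lie not in this short numerical estimate but in pinning down the action precisely, namely verifying that after passing from $\mathrm{SL}(2,q)$ to $\PSL(2,q)$ the complement $C$ acts as the full group of squares of the stated order $(q-1)/o$, together with the clean identification of $M$-stable subspaces with $K$-subspaces, which is exactly the step that converts the representation-theoretic statement into the elementary subfield computation.
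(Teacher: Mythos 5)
Your proof is correct, but note that the paper does not actually prove this lemma at all: it is stated as a quoted result with citations to Lidl--Niederreiter and Wilson, and the surrounding text simply uses it (in Proposition~\ref{prop:小さな初等p部分群は非準正規}) as a black box. So your argument is not a variant of the paper's proof but a self-contained replacement for the citation, and it is a sound one. The two key steps both check out: the matrix computation correctly identifies the conjugation action of the torus image on $S\simeq(\mathbb{F}_q,+)$ as multiplication by the group $M$ of squares in $\mathbb{F}_q^{\times}$ (of order $(q-1)/o$ in both parities of $q$, acting faithfully after factoring out $\{\pm I\}$), and the reduction of irreducibility to the statement $\mathbb{F}_p[M]=\mathbb{F}_q$ is clean, since $\mathbb{F}_p[M]$ is a finite integral domain, hence a subfield, and the $M$-stable $\mathbb{F}_p$-subspaces are exactly its subspaces. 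The closing estimate $(q-1)/o\le p^{d}-1$ with $d<n$ forcing $p^{\,n-d}<2$ is airtight. Two cosmetic remarks: first, the lemma speaks of \emph{the} cyclic complement $C$ in $N_G(S)=S\rtimes C$, while you work with the specific image of the diagonal torus; this is harmless because $S$ is abelian, so any two complements are $S$-conjugate and induce literally the same subgroup of $\Aut(S)$, but a one-line remark to that effect would seal the argument. Second, the paper's convention is $A^{b}=b^{-1}Ab$, whereas you conjugate as $gxg^{-1}$; this only swaps $a$ with $a^{-1}$ and leaves the set of multipliers $M$ unchanged. What your route buys is that the paper's key structural input, irreducibility of the Frobenius-complement action used to rule out pronormality of the intermediate $p$-subgroups, becomes verifiable without consulting the cited texts.
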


\begin{lem}[\cite{Robinson1996}]\label{lem:二面体群の部分群の正規性}
Let $D_{2d}=\langle r,s\mid r^{d}=\id,\ s^{2}=\id,\ srs=r^{-1}\rangle$ be the dihedral group of order $2d$. Assume $d\ge 3$. The following hold.
\begin{enumerate}
  \item Subgroups of order $2$.
  \begin{itemize}
    \item If $d$ is odd, all subgroups of order $2$ form a single conjugacy class and none of them is normal.
    \item If $d$ is even, there are three conjugacy classes of subgroups of order $2$. One class is the central subgroup $\langle r^{d/2}\rangle$, which is normal. The other two classes are generated by reflections and they are not normal.
  \end{itemize}

  \item Cyclic subgroups $\mathbb{Z}_{i}$ with $i\mid d$ and $i\neq 2$.
  \begin{itemize}
    \item Every such subgroup is contained in the normal cyclic subgroup $\langle r\rangle$. For each $i$ there is a unique subgroup of order $i$ and it is normal. All these subgroups form a single conjugacy class.
  \end{itemize}

  \item Dihedral subgroups $D_{2m}$ with $m\mid d$.
  \begin{itemize}
    \item If $d/m$ is odd, all such subgroups form a single conjugacy class.
    \item If $d/m$ is even, these subgroups split into two conjugacy classes.
    \item The normal dihedral subgroups are exactly the whole group $D_{2d}$ and, when $d$ is even, the subgroups of index $2$ in $D_{2d}$ which are isomorphic to $D_{2(d/2)}$. No other dihedral subgroup is normal.
  \end{itemize}
\end{enumerate}

For $d\in\{1,2\}$ the dihedral group $D_{2d}$ is abelian: $D_{2}\simeq \mathbb{Z}_{2}$ and $D_{4}\simeq \mathbb{Z}_{2}^{2}$. Hence every subgroup is normal.

\end{lem}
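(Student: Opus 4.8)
The plan is to make everything explicit from the two defining families of elements: the rotations $r^{i}$ with $0\le i<d$, which exhaust $\langle r\rangle\simeq\mathbb{Z}_{d}$, and the reflections $r^{i}s$. The only conjugation identities I will need are $r^{j}(r^{i}s)r^{-j}=r^{\,i+2j}s$, $s\,r^{i}s=r^{-i}$, and more generally $(r^{k}s)(r^{i}s)(r^{k}s)^{-1}=r^{\,2k-i}s$, together with the fact that conjugation fixes $\langle r\rangle$ setwise. Every assertion in the lemma will then be read off from how the exponent $i$ transforms under these maps.

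For Part (1), I would first observe that the involutions are exactly the $d$ reflections $r^{i}s$, together with the central element $r^{d/2}$ when $d$ is even (the unique involution inside $\langle r\rangle$). Since $r^{j}(r^{i}s)r^{-j}=r^{\,i+2j}s$, the reflection subgroups are permuted by the shift $i\mapsto i+2j$. When $d$ is odd, $2$ is invertible modulo $d$, so this shift is transitive and all $d$ reflection subgroups form a single class, none normal as $d\ge 3$. When $d$ is even, $i+2j$ preserves the parity of $i$, giving two classes of $d/2$ reflections each; adjoining the normal central class $\langle r^{d/2}\rangle$ yields the stated three classes.

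For Part (2), the key point is that every element of order $\ge 3$ is a rotation, so any cyclic subgroup of order $i\ne 2$ lies in $\langle r\rangle$. As $\langle r\rangle$ is cyclic of order $d$, it contains a unique subgroup $\langle r^{\,d/i}\rangle$ of each order $i\mid d$; this subgroup is characteristic in $\langle r\rangle$ and hence normal in $D_{2d}$, and being the unique subgroup of its order it is alone in its conjugacy class.

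Part (3) is where the real bookkeeping lives, and I expect it to be the main obstacle. A dihedral subgroup of order $2m$ must contain the unique rotation subgroup $\langle r^{\,d/m}\rangle$ of order $m$ and one reflection, so it has the form $\langle r^{\,d/m},\,r^{i}s\rangle$, and two such coincide exactly when $i\equiv i'\pmod{d/m}$; this yields precisely $d/m$ subgroups, indexed by $i$ modulo $d/m$. Conjugation by $r^{j}$ sends the index to $i+2j$ and conjugation by $r^{k}s$ sends it to $2k-i$, both modulo $d/m$. When $d/m$ is odd the shift $i\mapsto i+2j$ is already transitive, giving a single class; when $d/m$ is even both maps preserve the parity of $i$ modulo $d/m$, splitting the subgroups into two classes of size $(d/m)/2$. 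Finally a dihedral subgroup is normal precisely when its class has size one: this forces either $d/m=1$, the whole group, or $d/m=2$ with $d$ even, i.e.\ the two index-$2$ subgroups $\langle r^{2},s\rangle$ and $\langle r^{2},rs\rangle$, both isomorphic to $D_{2(d/2)}$. The delicate part is keeping the two conjugation actions, the counting modulo $d/m$, and the size-one normality criterion mutually consistent; once the parity invariant is isolated, the classification drops out.
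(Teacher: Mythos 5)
Your proof is correct, but note that there is nothing in the paper to compare it against: the lemma is stated as a known classification imported from Robinson \cite{Robinson1996}, with no proof given. Your argument is the standard elementary verification, and it is sound: conjugation preserves $\langle r\rangle$ setwise, the identities $r^{j}(r^{i}s)r^{-j}=r^{\,i+2j}s$ and $(r^{k}s)(r^{i}s)(r^{k}s)^{-1}=r^{\,2k-i}s$ describe the action on reflections, every subgroup of order $2m$ containing a reflection equals $\langle r^{\,d/m},r^{i}s\rangle$ with $i$ read modulo $d/m$, and the class counts together with the criterion that a subgroup is normal exactly when its conjugacy class is a singleton yield the statement, including the threshold cases $d/m=1$ and $d/m=2$. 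What your route buys is self-containedness: it makes explicit the parametrization and conjugation formulas that the paper, by citing the textbook, leaves implicit, even though its later analysis of dihedral $2$-subgroups of $\PSL(2,q)$ rests on precisely these counts. Two expository points if your proof is meant to replace the citation: in part (1) with $d$ even, rotation-conjugation gives transitivity within each parity class, but to conclude that the two parity classes do not merge into one you must also invoke the reflection-conjugation formula (which likewise preserves parity) --- you state this explicitly only in part (3); and your part (3) implicitly defines a ``dihedral subgroup'' as one containing a reflection, so under the paper's convention $D_{2}\simeq\mathbb{Z}_{2}$ the central subgroup $\langle r^{\,d/2}\rangle$ falls outside your parametrization. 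This is harmless, since part (1) of the lemma treats order-$2$ subgroups separately, but it deserves one sentence so that the case split is visibly exhaustive.
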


\subsection{Classification of Pronormal Subgroups of $\bm{\PSL(2,q)}$}

Assume $q=p^{n}$. Among the families listed in Proposition~\ref{prop:PSLで準正規性を求めるべき部分群}, we first determine the pronormality of $p$-subgroups of the form $(\mathbb{Z}_{p})^{j}$ with $1\le j<n$.

\begin{prop}\label{prop:小さな初等p部分群は非準正規}
Let $q=p^{n}$. Then every proper nontrivial elementary abelian $p$-subgroup $(\mathbb{Z}_{p})^{j}$ with $1\le j<n$ is not pronormal in $G$.
\end{prop}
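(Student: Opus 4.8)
The plan is to use the Frobenius structure of the Sylow $p$-normalizer together with the irreducibility established in Lemma~\ref{lem:規約性}, and to invoke John Rose's criterion (Lemma~\ref{lem:JohnRoseの補題}). By that lemma, a $p$-subgroup $P$ with $P\subset S$ is pronormal in $G$ if and only if $P\triangleleft N_G(S)$ for the (unique up to conjugacy) Sylow $p$-subgroup $S$ containing it. Since all the subgroups $(\mathbb{Z}_p)^j$ of a fixed order are conjugate in $G$ (they are $p$-subgroups of the csc-group $G$, or more directly they sit inside some Sylow $p$-subgroup, all of which are conjugate), Corollary~\ref{cor:PSL,J1,Szの巡回部分群の準正規性は代表1つを見ればok}-style reasoning reduces the problem to a single representative $P\cong(\mathbb{Z}_p)^j$ inside a fixed $S$. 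So it suffices to show that no such proper nontrivial subgroup $P$ is normalized by all of $N_G(S)$.

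First I would recall from Lemma~\ref{lem:PSLのシローp部分群とその正規化群} (case \eqref{eq:case-2}) and the remark following it that $S\simeq(\mathbb{Z}_p)^n$, identified with the additive group of $\mathbb{F}_q$, and $N_G(S)=S\rtimes C$ with $C\simeq\mathbb{Z}_{(q-1)/o}$ acting fixed-point-freely on $S\setminus\{\id\}$. The key input is Lemma~\ref{lem:規約性}: $S$ is an irreducible $\mathbb{F}_pC$-module. Now any subgroup $P\subset S$ is, because $S$ is abelian (elementary abelian), an $\mathbb{F}_p$-subspace of $S$. If $P$ were normal in $N_G(S)$, then in particular $P$ would be invariant under the conjugation action of $C$, i.e.\ $P$ would be a $C$-invariant $\mathbb{F}_p$-subspace of $S$. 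Irreducibility forces $P=\{\id\}$ or $P=S$, contradicting $1\le j<n$ (so $P$ is proper and nontrivial).

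Hence I would argue: assume for contradiction that $(\mathbb{Z}_p)^j$ with $1\le j<n$ is pronormal in $G$. By Lemma~\ref{lem:JohnRoseの補題} it is normal in $N_G(S)$, hence $C$-invariant, hence an $\mathbb{F}_pC$-submodule of $S$, which by Lemma~\ref{lem:規約性} must be $\{\id\}$ or $S$; but its $\mathbb{F}_p$-dimension is $j$ with $0<j<n=\dim_{\mathbb{F}_p}S$, a contradiction. Therefore $(\mathbb{Z}_p)^j$ is not pronormal.

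The main obstacle, and the only point requiring care, is the translation between the group-theoretic statement ``$P\triangleleft N_G(S)$'' and the module-theoretic statement ``$P$ is a $C$-submodule.'' I must verify that normality in $N_G(S)=S\rtimes C$ is equivalent to $C$-invariance for subgroups $P\subset S$: since $S$ is abelian, $S$ automatically normalizes any subgroup of $S$, so normality in $S\rtimes C$ reduces precisely to invariance under $C$, and this is exactly the $\mathbb{F}_pC$-submodule condition once $P$ is recognized as an $\mathbb{F}_p$-subspace. One should also note explicitly that the case $j=0$ (trivial) and $j=n$ (the full Sylow subgroup, pronormal by Lemma~\ref{lem:準正規部分群のクラス}) are correctly excluded, so the restriction $1\le j<n$ is exactly what makes $P$ a proper nontrivial submodule and triggers the contradiction.
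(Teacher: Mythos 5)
Your proof is correct and follows essentially the same route as the paper: John Rose's criterion (Lemma~\ref{lem:JohnRoseの補題}) reduces pronormality of $P\subset S$ to normality in $N_G(S)=S\rtimes C$, and irreducibility of the $C$-action (Lemma~\ref{lem:規約性}) excludes proper nontrivial $C$-invariant subgroups; the only cosmetic difference is that the paper cites the Frobenius normal-subgroup classification (Lemma~\ref{lem:フロベニウス部分群の正規部分群の分類}) for the step that normality in $N_G(S)$ forces $C$-invariance, which you obtain by the direct (and trivial) observation. One caution: your parenthetical claim that all subgroups $(\mathbb{Z}_p)^{j}$ of a fixed order are conjugate because $G$ is a csc-group is unjustified for $j\ge 2$ (the csc property concerns only cyclic subgroups), but this reduction is inessential, since your argument already applies to every proper nontrivial subgroup of an arbitrary Sylow $p$-subgroup, which is all that is required.
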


\begin{proof}
Let $S$ be a Sylow $p$-subgroup of $G$, so $S\simeq (\mathbb{Z}_{p})^{n}$. Let $C$ denote the cyclic complement in $N_G(S)=S\rtimes C$ of order $(q-1)/o$. Fix $P\subset S$ with $\{\id\}\subsetneq P \subsetneq S$ and $|P|=p^{j}$ for some $1\le j<n$.

By Lemma~\ref{lem:JohnRoseの補題}, the subgroup $P$ is pronormal in $G$ if and only if $P\triangleleft N_G(S)$. In particular, $N_G(S)=S\rtimes C$ is a Frobenius group, hence by Lemma \ref{lem:フロベニウス部分群の正規部分群の分類}, and specifically by \eqref{eq:FrobClass1}, any normal subgroup contained in $S$ must be $C$-invariant. Therefore $P$ can be normal in $N_G(S)$ only if it is $C$-invariant.

By Lemma \ref{lem:規約性}, the conjugation action of $C$ on $S$ is irreducible over $\mathbb{F}_{p}$. Hence the only $C$-invariant subgroups of $S$ are $\{\id\}$ and $S$. Since $\{\id\}\subsetneq P \subsetneq S$, the subgroup $P$ is not $C$-invariant, and therefore $P\ntriangleleft N_G(S)$. Applying Lemma~\ref{lem:JohnRoseの補題} again, we conclude that $P$ is not pronormal in $G$.
\end{proof}

Assume $q=p^{n}$. By Proposition~\ref{prop:小さな初等p部分群は非準正規}, every elementary abelian $p$-subgroup $(\mathbb{Z}_{p})^{j}$ with $1\le j<n$ is non-pronormal. We therefore turn to cyclic subgroups $\mathbb{Z}_{d}$ with $d\mid \tfrac{q\pm 1}{o}$. If $d=p^{i}$, then $p\nmid \tfrac{q\pm 1}{o}$. Hence no cyclic subgroup of order $p^{i}$ arises from \eqref{eq:cyclic}, and this case can be discarded.

Henceforth assume that $|G|$ has a prime divisor $p'$ with $p'\ne p$ and $p'\ne 2$. Our next goal is to determine the pronormality of $\mathbb{Z}_{d}$ for $d\mid \tfrac{q\pm 1}{o}$ with $d$ not a power of $2$.

\begin{prop}\label{prop:2部分群を除いた巡回p'部分群は準正規}
Let $q:=p^{n}$. Suppose $|G|$ has a prime factor $p'$ different from $p$ and $2$, and set $j:=v_{p'}(|G|)$. Then $\mathbb{Z}_{p'^{i}}$ with $1\le i\le j$ is a pronormal subgroup in $G$. 
\end{prop}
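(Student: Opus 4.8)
The plan is to reduce everything to the observation that, for the primes $p'$ under consideration, the relevant Sylow subgroup of $G$ is \emph{cyclic}, so that the subgroup-lattice of a Sylow is totally ordered and the machinery of Corollary~\ref{cor:巡回シローp部分群の部分群も準正規} applies directly. Concretely, since $p'\neq p$, $p'\neq 2$, and $p'\mid\tfrac{q\pm 1}{o}$, I would first invoke Lemma~\ref{lem:PSLのシローp部分群とその正規化群}, case~\eqref{eq:case-3}, to record that a Sylow $p'$-subgroup $S$ of $G$ satisfies $S\simeq\mathbb{Z}_{p'^{\,j}}$ with $j=v_{p'}(|G|)$, and $N_{G}(S)\simeq D_{2(q\pm 1)/o}$. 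The only feature I actually need here is the isomorphism type $S\simeq\mathbb{Z}_{p'^{\,j}}$: the Sylow $p'$-subgroup is cyclic.

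With cyclicity in hand, the core of the argument is immediate. A cyclic group $\mathbb{Z}_{p'^{\,j}}$ possesses, for each $0\le i\le j$, a unique subgroup of order $p'^{\,i}$, namely a copy of $\mathbb{Z}_{p'^{\,i}}$, and these are precisely the subgroups of $S$. Because the Sylow $p'$-subgroup is cyclic, Corollary~\ref{cor:巡回シローp部分群の部分群も準正規} asserts that \emph{every} subgroup of $S$ is pronormal in $G$. Applying this to the subgroup of order $p'^{\,i}$ inside $S$ shows that this particular copy of $\mathbb{Z}_{p'^{\,i}}$ is pronormal in $G$ for each $1\le i\le j$.

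Finally, I would upgrade this from a statement about one fixed Sylow $S$ to the full claim about all of $G$. There are two equivalent ways to close the gap, and I would state the cleaner one: any subgroup isomorphic to $\mathbb{Z}_{p'^{\,i}}$ is a $p'$-subgroup, hence is contained in \emph{some} Sylow $p'$-subgroup, which is again cyclic, so Corollary~\ref{cor:巡回シローp部分群の部分群も準正規} applies verbatim to it. Alternatively, since $G$ is a csc-group by Lemma~\ref{lem:PSL,J1,SzはCSC群}, all cyclic subgroups of order $p'^{\,i}$ are conjugate, and Corollary~\ref{cor:PSL,J1,Szの巡回部分群の準正規性は代表1つを見ればok} transfers pronormality from the representative inside $S$ to every such subgroup. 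Either route yields that $\mathbb{Z}_{p'^{\,i}}$ is pronormal in $G$ for all $1\le i\le j$, as required.

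I do not anticipate a genuine obstacle in this proof: the content is entirely front-loaded into the structural input from Lemma~\ref{lem:PSLのシローp部分群とその正規化群}. The one point that deserves a careful sentence rather than a one-line dismissal is the passage from a single Sylow to all conjugates, so that the hypothesis $1\le i\le j$ is matched by an actual subgroup of $S$ in every case; once cyclicity of the Sylow $p'$-subgroup is established, the pronormality conclusion is essentially forced by Corollary~\ref{cor:巡回シローp部分群の部分群も準正規}.
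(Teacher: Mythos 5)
Your proposal is correct and follows essentially the same route as the paper's own proof: cite Lemma~\ref{lem:PSLのシローp部分群とその正規化群} (case~\eqref{eq:case-3}) to see that the Sylow $p'$-subgroup is cyclic, then apply Corollary~\ref{cor:巡回シローp部分群の部分群も準正規}. The only difference is that you spell out the passage from one fixed Sylow subgroup to arbitrary copies of $\mathbb{Z}_{p'^{\,i}}$ in $G$, which the paper leaves implicit; this is a harmless (and slightly more careful) elaboration, not a different argument.
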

\begin{proof}
Let $q:=p^{n}$. If $p'\neq 2$ and $p'\neq p$, then by Lemma \ref{lem:PSLのシローp部分群とその正規化群}, the Sylow $p'$-subgroup of $G$ is cyclic. In this case, pronormality follows immediately from Corollary \ref{cor:巡回シローp部分群の部分群も準正規}.
\end{proof}

\begin{prop}\label{prop:PSLで、奇数位数巡回部分群は準正規}
Let $q=p^{n}$ and $o:=\gcd(q-1,2)$. 
Assume $d\ge 1$ is odd and $d\mid \tfrac{q\pm 1}{o}$. 
Then \emph{every} subgroup of $G$ isomorphic to $\mathbb{Z}_{d}$ is pronormal in $G$.
\end{prop}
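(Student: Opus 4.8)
The plan is to reduce the pronormality of $\mathbb{Z}_d$ to that of its Sylow subgroups, each of which is already settled by Proposition~\ref{prop:2部分群を除いた巡回p'部分群は準正規}, and then to glue the pieces together using the product lemma (Lemma~\ref{lem:直積しても準正規性は保存}). The case $d=1$ is trivial, since the identity subgroup is normal and hence pronormal, so I assume $d>1$.

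First I would record the relevant arithmetic. Since $q=p^{n}$ we have $q\equiv 0 \pmod p$, so $p\nmid q-1$ and $p\nmid q+1$; thus no prime divisor of $\tfrac{q\pm 1}{o}$ equals $p$. As $d$ is odd, no prime divisor equals $2$ either. Writing the prime factorization $d=\prod_{k=1}^{r} p_{k}'^{\,a_{k}}$, every $p_{k}'$ is therefore a prime distinct from $p$ and $2$. Moreover $d\mid \tfrac{q\pm 1}{o}$ and $\tfrac{q\pm 1}{o}\mid |G|$, so $a_{k}\le v_{p_{k}'}(|G|)$ for each $k$. Now fix one subgroup $Z\cong \mathbb{Z}_{d}$ of $G$ and decompose it as the internal direct product $Z=P_{1}\times\cdots\times P_{r}$ of its Sylow subgroups, where $P_{k}\cong \mathbb{Z}_{p_{k}'^{\,a_{k}}}$. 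Each $P_{k}$ is a cyclic $p_{k}'$-subgroup with $p_{k}'\ne p,2$ and $a_{k}\le v_{p_{k}'}(|G|)$, so Proposition~\ref{prop:2部分群を除いた巡回p'部分群は準正規} shows $P_{k}$ is pronormal in $G$; uniformity across the conjugacy class is guaranteed by Corollary~\ref{cor:PSL,J1,Szの巡回部分群の準正規性は代表1つを見ればok}.

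Next I would build up $Z$ by iterating Lemma~\ref{lem:直積しても準正規性は保存}. Because the factors lie in the abelian group $Z$, they commute pairwise, so every partial product is again a subgroup and commutes with the next factor. Setting $Q_{1}:=P_{1}$ and inductively $Q_{k+1}:=Q_{k}P_{k+1}$, at each step $Q_{k}$ and $P_{k+1}$ are commuting pronormal subgroups with $Q_{k}P_{k+1}=P_{k+1}Q_{k}$, so Lemma~\ref{lem:直積しても準正規性は保存} yields that $Q_{k+1}$ is pronormal. After $r$ steps we obtain that $Z=Q_{r}$ is pronormal in $G$. Finally, since $G$ is a csc-group (Lemma~\ref{lem:PSL,J1,SzはCSC群}), all subgroups isomorphic to $\mathbb{Z}_{d}$ are conjugate to $Z$, and by Lemma~\ref{lem:共役類の準正規性} every such subgroup is pronormal, which is the assertion.

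I do not expect a serious obstacle, since every required ingredient is already available; the only points needing care are verifying that each prime divisor of $d$ is genuinely covered by Proposition~\ref{prop:2部分群を除いた巡回p'部分群は準正規} (that is, confirming $p_{k}'\ne p,2$ and $a_{k}\le v_{p_{k}'}(|G|)$) and checking that the commutation hypothesis $Q_{k}P_{k+1}=P_{k+1}Q_{k}$ of Lemma~\ref{lem:直積しても準正規性は保存} persists at every stage of the induction, which it does because all factors sit inside the abelian group $Z$.
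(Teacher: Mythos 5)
Your proposal is correct and follows essentially the same route as the paper's own proof: decompose the cyclic subgroup into its (odd, non-defining-characteristic) Sylow factors, apply Proposition~\ref{prop:2部分群を除いた巡回p'部分群は準正規} to each, combine them with Lemma~\ref{lem:直積しても準正規性は保存}, and spread the conclusion over the whole isomorphism class via the csc-property (Corollary~\ref{cor:PSL,J1,Szの巡回部分群の準正規性は代表1つを見ればok}). Your version is slightly more careful than the paper's --- handling $d=1$, checking $a_{k}\le v_{p_{k}'}(|G|)$, and making the iteration of the product lemma explicit --- but the mathematical content is identical.
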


\begin{proof}
Fix a cyclic subgroup $H_{d}\subset G$ with $H_{d}\simeq \mathbb{Z}_{d}$. 
Since $d$ is odd and $d\mid \tfrac{q\pm 1}{o}$, all prime divisors of $d$ are different from $p$ and from $2$. 
Write $d=\prod_{i} p_{i}^{\,a_{i}}$ with $p_{i}\neq 2,p$ and let $P_{i}\subset H_{d}$ be the unique subgroup of order $p_{i}^{\,a_{i}}$, then $H_{d}=\prod_{i}P_{i}$ and the factors commute because $H_{d}$ is cyclic.

By Proposition~\ref{prop:2部分群を除いた巡回p'部分群は準正規}, each $P_{i}$ is pronormal in $G$. 
Since the $P_{i}$ commute, Lemma~\ref{lem:直積しても準正規性は保存} implies that their product $H_{d}$ is pronormal in $G$.

Finally, by Corollary~\ref{cor:PSL,J1,Szの巡回部分群の準正規性は代表1つを見ればok}, every subgroup of $G$ isomorphic to $\mathbb{Z}_{d}$ is pronormal.
\end{proof}

\begin{fac}[\cite{wilson2009}]\label{fac:巡回部分群の中心化群も巡回部分群}
Let $q=p^{n}$ and $o:=\gcd(q-1,2)$. 
If $A\subset G$ is cyclic of order $|A|=d$ with $d\mid \frac{q\pm1}{o}$ and $d\ne 2$, then the centralizer $C_G(A)$ is cyclic.
\end{fac}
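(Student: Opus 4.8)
The plan is to reduce the computation to $\mathrm{SL}(2,q)$, where $A$ is generated by a regular semisimple element whose centralizer is a maximal torus, and then to descend carefully through the center. First I would replace $A$ by a generator $a$, so that $C_G(A)=C_G(a)$, and lift $a$ to an element $\tilde a\in\mathrm{SL}(2,q)$. Since $d\mid\tfrac{q\pm1}{o}$, the order $d$ is coprime to $p$, so $\tilde a$ is semisimple with eigenvalues $\lambda,\lambda^{-1}$ lying either in $\mathbb{F}_q^\times$ (the split case, $d\mid q-1$) or in the norm-one subgroup of $\mathbb{F}_{q^2}^\times$ (the non-split case, $d\mid q+1$). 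Because $A$ is nontrivial and $d\neq 2$ we have $d\ge 3$, which forces $\lambda\neq\pm 1$ and hence $\lambda\neq\lambda^{-1}$; thus $\tilde a$ is regular semisimple, and its centralizer in $\mathrm{SL}(2,q)$ is the unique maximal torus $\tilde T$ containing it, cyclic of order $q-1$ in the split case or $q+1$ in the non-split case.

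The delicate step is the passage from $\mathrm{SL}(2,q)$ to $G=\PSL(2,q)=\mathrm{SL}(2,q)/Z$ with $Z=\{\pm I\}$ of order $o$. Writing $\pi$ for the quotient map, the preimage of $C_G(a)$ is $\{g\in\mathrm{SL}(2,q):g\tilde a g^{-1}\in\{\tilde a,-\tilde a\}\}$, so beyond the honest centralizer $C_{\mathrm{SL}(2,q)}(\tilde a)=\tilde T$ one must rule out any $g$ with $g\tilde a g^{-1}=-\tilde a$. For odd $q$ such a $g$ would send the eigenvalue multiset $\{\lambda,\lambda^{-1}\}$ to $\{-\lambda,-\lambda^{-1}\}$, forcing $-\lambda=\lambda^{-1}$, i.e. $\lambda^2=-1$ and $\tilde a^2=-I$; but then $a=\pi(\tilde a)$ would have order $2$, contradicting $d\ge 3$. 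Hence no such twisting element exists, $\pi^{-1}(C_G(a))=\tilde T$, and therefore $C_G(A)=\tilde T/Z$.

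Since $Z\subset\tilde T$ and $\tilde T$ is cyclic, the quotient $\tilde T/Z$ is cyclic of order $\tfrac{q-1}{o}$ or $\tfrac{q+1}{o}$; in the dihedral normalizer $D_{2(q\pm 1)/o}$ of Lemma~\ref{lem:PSLのシローp部分群とその正規化群}\,\eqref{eq:case-3} this is exactly the rotation subgroup. This establishes that $C_G(A)$ is cyclic. I expect the main obstacle to be precisely the central bookkeeping of the preceding paragraph, for it is there that the hypothesis $d\neq 2$ is indispensable: when $a$ is an involution the twisting element $g$ with $g\tilde a g^{-1}=-\tilde a$ genuinely exists and enlarges the centralizer from the cyclic torus to the full dihedral group $D_{2(q\pm 1)/o}$, which is exactly the excluded case. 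A secondary point to verify is the even-characteristic situation, where $o=1$, the center $Z$ is trivial, $\PSL(2,q)=\mathrm{SL}(2,q)$, and the descent is immediate.
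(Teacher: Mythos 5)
Your proposal is correct. There is no internal proof to compare it against: the paper states this result as a Fact quoted from \cite{wilson2009} and never proves it, so what you have produced is the standard argument underlying that citation, and it makes the paper's dependence on the literature self-contained. Your route --- lift a generator $a$ to $\tilde a\in\mathrm{SL}(2,q)$, note that $d\mid\tfrac{q\pm1}{o}$ makes $\tilde a$ semisimple and that nontriviality makes it regular ($\lambda\ne\lambda^{-1}$), identify $C_{\mathrm{SL}(2,q)}(\tilde a)$ with the cyclic torus $\tilde T$ of order $q-1$ or $q+1$, then descend modulo $Z=\{\pm I\}$ --- is the classical one, and the step carrying the real content is handled correctly: the preimage of $C_G(a)$ can exceed $\tilde T$ only through an element $g$ with $g\tilde a g^{-1}=-\tilde a$, and your eigenvalue computation (forcing $\lambda^{2}=-1$, hence $\tilde a^{2}=-I$, hence $d\mid 2$) shows this occurs exactly in the excluded case $d=2$; this simultaneously explains why involution centralizers are dihedral, i.e.\ why the hypothesis $d\ne 2$ cannot be dropped. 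Two minor points would tighten the write-up. First, the assertion that a regular semisimple element has the torus as its full $\mathrm{SL}(2,q)$-centralizer can be made elementary: since the minimal polynomial of $\tilde a$ equals its characteristic polynomial and is separable, $C_{M_{2}(\mathbb{F}_q)}(\tilde a)=\mathbb{F}_q[\tilde a]$, which is $\mathbb{F}_q\times\mathbb{F}_q$ or $\mathbb{F}_{q^{2}}$, and its determinant-one units form a cyclic group of order $q-1$ or $q+1$ containing $Z$. Second, as literally stated the Fact also admits $d=1$, where $C_G(A)=G$ is not cyclic; your standing assumption that $A$ is nontrivial is therefore necessary, and it matches the only use of the Fact in the paper, namely Proposition~\ref{prop:2べき以外の巡回部分群は準正規}, where it is applied to a cyclic subgroup of odd order $m\ge 3$.
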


\begin{prop}\label{prop:2べき以外の巡回部分群は準正規}
Let $q=p^{n}$. 
Fix $\ell\in \mathbb{Z}_{\ge 0}$ and $m\in \mathbb{Z}_{\ge 3}$ with $m$ odd, and let $H\subset G$ be a cyclic subgroup with $H\simeq \mathbb{Z}_{2^{\ell}m}$. 
Then every subgroup of $G$ isomorphic to $H$ is pronormal in $G$.
\end{prop}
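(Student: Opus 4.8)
The plan is to reduce pronormality of $H$ to that of its odd part and then transfer. Since $H\simeq\mathbb{Z}_{2^{\ell}m}$ with $\gcd(2^{\ell},m)=1$, write $H=A\times B$, where $A\simeq\mathbb{Z}_{m}$ is the (characteristic) odd part and $B\simeq\mathbb{Z}_{2^{\ell}}$ is the Sylow $2$-subgroup of $H$. Because $H\subset G$, Theorem~\ref{thm:dicsonのPSL部分群分類定理} forces $2^{\ell}m\mid\tfrac{q\pm 1}{o}$, so in particular $m\mid\tfrac{q\pm 1}{o}$ with $m\ge 3$ odd. Hence Proposition~\ref{prop:PSLで、奇数位数巡回部分群は準正規} gives that $A$ is pronormal in $G$; this is the input I will leverage, rather than trying to handle the $2$-part $B$ directly (which on its own need not be pronormal).

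First I would establish that $H\triangleleft N_{G}(A)$. Since $|A|=m\ge 3$ and $m\mid\tfrac{q\pm 1}{o}$, Fact~\ref{fac:巡回部分群の中心化群も巡回部分群} shows that $C_{G}(A)$ is cyclic. As $H$ is abelian and contains $A$, we have $H\subseteq C_{G}(A)$; a cyclic group has a unique subgroup of each order, so $H$ is the unique subgroup of $C_{G}(A)$ of order $2^{\ell}m$ and is therefore characteristic in $C_{G}(A)$. Combining this with $C_{G}(A)\triangleleft N_{G}(A)$ and the fact that a characteristic subgroup of a normal subgroup is normal, we conclude $H\triangleleft N_{G}(A)$.

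The transfer step then runs directly from Definition~\ref{defi:準正規部分群の定義}. Fix $g\in G$. Pronormality of $A$ yields some $y\in\langle A,A^{g}\rangle$ with $A^{y}=A^{g}$, and since $A\subseteq H$, $A^{g}\subseteq H^{g}$ we have $\langle A,A^{g}\rangle\subseteq\langle H,H^{g}\rangle$, so $y\in\langle H,H^{g}\rangle$. From $A^{y}=A^{g}$ a short computation gives $gy^{-1}\in N_{G}(A)$, whence $H\triangleleft N_{G}(A)$ forces $H^{gy^{-1}}=H$, i.e. $H^{y}=H^{g}$. Thus $x:=y$ witnesses pronormality of $H$ at $g$. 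Finally, Corollary~\ref{cor:PSL,J1,Szの巡回部分群の準正規性は代表1つを見ればok} (via the csc property) upgrades this from the fixed $H$ to every subgroup of $G$ isomorphic to $\mathbb{Z}_{2^{\ell}m}$.

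The main obstacle is that one cannot invoke Corollary~\ref{cor:正規ならば準正規なので補題成立} as a black box: that corollary requires the pronormal subgroup inside $H$ to be a \emph{$p$-subgroup}, whereas $A\simeq\mathbb{Z}_{m}$ may involve several distinct odd primes. The essential point I must supply by hand is therefore the normality $H\triangleleft N_{G}(A)$, and its proof genuinely relies on the cyclicity of $C_{G}(A)$ from Fact~\ref{fac:巡回部分群の中心化群も巡回部分群}: it is exactly this cyclicity that makes $H$ characteristic in $C_{G}(A)$ and thereby lets the $2$-part $B$ be carried along even though it need not be pronormal by itself.
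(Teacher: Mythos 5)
Your proposal is correct and is essentially the paper's own argument: both proofs isolate the odd part $A\simeq\mathbb{Z}_{m}$ of $H$ (the paper calls it $L$), obtain its pronormality from Proposition~\ref{prop:PSLで、奇数位数巡回部分群は準正規}, take the conjugating witness inside $\langle A,A^{g}\rangle\subset\langle H,H^{g}\rangle$, and close with Fact~\ref{fac:巡回部分群の中心化群も巡回部分群} and Corollary~\ref{cor:PSL,J1,Szの巡回部分群の準正規性は代表1つを見ればok}. The only difference is how the cyclicity of the centralizer is exploited --- you show $H$ is characteristic in $C_{G}(A)$, hence $H\triangleleft N_{G}(A)$, and transfer along $gy^{-1}\in N_{G}(A)$, whereas the paper identifies $H^{x}$ and $H^{g}$ directly as the unique subgroup of order $|H|$ inside the cyclic group $C_{G}(L^{g})$ --- a cosmetic repackaging of the same mechanism, not a different route.
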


\begin{proof}
Let $L\subset H$ be the unique subgroup of order $m$, so $L\simeq \mathbb{Z}_{m}$. 
Since $L\subset H$ and $H$ is abelian, we have $H\subset C_{G}(L)$. Hence, for every $g\in G$ we have $H^{g}\subset C_{G}(L^{g})$.

The subgroup $L$ has odd order and is cyclic, so by Proposition~\ref{prop:PSLで、奇数位数巡回部分群は準正規} it is pronormal in $G$. 
Therefore there exists $x\in \langle L,L^{g}\rangle\subset \langle H,H^{g}\rangle$ such that $L^{g}=L^{x}$, and consequently
$
H^{x}\subset C_{G}(L^{x})=C_{G}(L^{g}).
$
Applying Fact~\ref{fac:巡回部分群の中心化群も巡回部分群} to $A=L^{g}$ shows that $C_{G}(L^{g})$ is cyclic. 
In a cyclic group there is a unique subgroup of each order, so the two cyclic subgroups $H^{g}$ and $H^{x}$ of the same order $|H|$, both contained in $C_{G}(L^{g})$, must coincide: $H^{g}=H^{x}$. 
Thus $H$ is pronormal in $G$.

Finally, by Corollary~\ref{cor:PSL,J1,Szの巡回部分群の準正規性は代表1つを見ればok}, every subgroup of $G$ isomorphic to $H$ is pronormal.
\end{proof}

Assume $q=p^{n}$. The cases of $(\mathbb{Z}_{p})^{j}$ with $1\le j<n$ and of $\mathbb{Z}_{d}$ with $d\mid \tfrac{q\pm 1}{o}$ and $d$ not a power of $2$ have been settled. We now analyze the $2$-power subgroups $\mathbb{Z}_{2^{i}}$ and $D_{2^{j}}$ with $2^{i},2^{j}\mid \tfrac{q\pm 1}{o}$.

If \(q=2^{n}\), then \(2\nmid(q\pm1)\). Hence every \(d\) with \(d\mid \tfrac{q\pm1}{o}\) is odd. Consequently the families in \eqref{eq:dihedral} and \eqref{eq:cyclic} contain no cyclic or dihedral subgroups of \(2\)-power order. We therefore assume that \(q\) is odd.

\begin{prop}\label{prop:PSLの非準正規な2部分群}
Let $q$ odd, and put $k:=v_{2}(|G|)$. Then:
\[
\begin{aligned}
\textbf{(A)}\ & q\equiv \pm 1 \pmod 8:\ \ \text{all subgroups isomorphic to }D_{2^{j}}\ \text{with }1\le j\le k-2\ \text{are non\mbox{-}pronormal},\\
\textbf{(B)}\ & q\equiv \pm 3 \pmod 8:\ \ \text{all subgroups isomorphic to }\mathbb{Z}_{2}\ \text{are non\mbox{-}pronormal}.
\end{aligned}
\]
All other $2$-subgroups are pronormal. Note that $D_{2}\simeq \mathbb{Z}_{2}$ and $D_{4}\simeq (\mathbb{Z}_{2})^{2}$.
\end{prop}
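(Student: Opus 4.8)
The plan is to reduce everything to John Rose's criterion (Lemma~\ref{lem:JohnRoseの補題}): a $2$-subgroup $P$ is pronormal in $G$ exactly when $P\triangleleft N_G(S')$ for \emph{every} Sylow $2$-subgroup $S'\supseteq P$. By Theorem~\ref{thm:dicsonのPSL部分群分類定理} the only $2$-subgroups of $G$ are cyclic $\mathbb{Z}_{2^i}$ and dihedral $D_{2^j}$ (with $D_2\simeq\mathbb{Z}_2$, $D_4\simeq V_4$), so it suffices to run the criterion on these two families, reading off the Sylow data from Lemma~\ref{lem:PSLのシローp部分群とその正規化群}, which splits precisely along the two congruence classes in the statement.

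First I would treat (A). Here Lemma~\ref{lem:PSLのシローp部分群とその正規化群}\eqref{eq:case-b} gives $S\simeq D_{2^k}$ with $N_G(S)=S$ and $k\ge 3$, so John Rose's condition becomes simply ``$P$ is normal in every Sylow $2$-subgroup containing it.'' Any $P\simeq D_{2^j}$ sits inside some $S'\simeq D_{2^k}$ as a dihedral subgroup of order $2^j$; by Lemma~\ref{lem:二面体群の部分群の正規性}(3) such a subgroup is normal in $S'$ only when its index is at most $2$, i.e. $j\in\{k-1,k\}$. Hence for $1\le j\le k-2$ the subgroup $P$ is non-normal in this $S'$ and therefore non-pronormal, giving the asserted list. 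For the complementary $2$-subgroups I would argue: $D_{2^k}=S$ is a Sylow subgroup, hence pronormal by Lemma~\ref{lem:準正規部分群のクラス}; $D_{2^{k-1}}$ has index $2$ in any containing Sylow and so is normal there; and a cyclic $\mathbb{Z}_{2^i}$ with $i\ge 2$ consists of elements of order $\ge 4$, which in a dihedral $2$-group are all rotations, so $\mathbb{Z}_{2^i}$ is the unique subgroup of its order inside the rotation subgroup, hence characteristic and normal in every containing Sylow. In all three cases John Rose's criterion yields pronormality.

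Next I would treat (B). Now Lemma~\ref{lem:PSLのシローp部分群とその正規化群}\eqref{eq:case-c} gives $S\simeq V_4$ and $N_G(S)\simeq A_4$, so $k=2$ and the only $2$-subgroups are $\mathbb{Z}_2$ and $V_4$. The Sylow $V_4=S$ is pronormal by Lemma~\ref{lem:準正規部分群のクラス}. For $\mathbb{Z}_2$, take the Sylow $S'=V_4\subset A_4=N_G(S')$ containing it; the three involutions of $V_4$ are permuted transitively by the $3$-cycles of $A_4$, so no order-$2$ subgroup is normal in $A_4$. Thus $\mathbb{Z}_2\ntriangleleft N_G(S')$, and John Rose's criterion shows $\mathbb{Z}_2$ is non-pronormal; uniformity over all copies of $\mathbb{Z}_2$ also follows from Corollary~\ref{cor:PSL,J1,Szの巡回部分群の準正規性は代表1つを見ればok}.

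The main obstacle is bookkeeping rather than conceptual: John Rose's lemma demands the normality condition at every Sylow $2$-subgroup containing $P$, not just one, so I must check that the embedding of each $D_{2^j}$ (resp.\ $\mathbb{Z}_{2^i}$) into a dihedral Sylow is rigid enough that the verdict is the same for all containing Sylows. The equality $N_G(S)=S$ in (A) and the explicit $A_4$ in (B) are what make this uniformity automatic; the one place requiring care is confirming that a cyclic $\mathbb{Z}_{2^i}$ with $i\ge 2$ cannot be embedded in a dihedral Sylow except inside the rotation subgroup, which is exactly the statement that dihedral $2$-groups have no non-rotation elements of order $\ge 4$.
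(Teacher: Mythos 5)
Your overall strategy coincides with the paper's: reduce everything to Rose's criterion (Lemma~\ref{lem:JohnRoseの補題}), read off the Sylow data from Lemma~\ref{lem:PSLのシローp部分群とその正規化群}, and decide normality inside \(D_{2^{k}}\) (case (A)) or \(A_{4}\) (case (B)) via Lemma~\ref{lem:二面体群の部分群の正規性}. Your case (B), your treatment of the cyclic subgroups \(\mathbb{Z}_{2^{a}}\) with \(a\ge 2\), and your treatment of \(D_{2^{k-1}}\) and \(D_{2^{k}}\) match the paper's steps (B), (A-2) and (A-3) essentially verbatim, and are correct.

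There is, however, a genuine gap in case (A) at \(j=1\). You subsume \(\mathbb{Z}_{2}=D_{2}\) into the dihedral case and cite Lemma~\ref{lem:二面体群の部分群の正規性}(3) to claim that an order-\(2\) subgroup of \(S'\simeq D_{2^{k}}\) is ``normal only when its index is at most \(2\)'', hence never normal. That claim is false: by part (1) of the same lemma (here \(d=2^{k-1}\) is even since \(k\ge 3\)), the central subgroup \(\langle r^{2^{k-2}}\rangle\) of order \(2\) \emph{is} normal in \(S'\); part (3) governs the genuinely dihedral subgroups, not \(D_{2}\). Consequently, if the involution generating \(P\) happens to be the central involution of the Sylow \(S'\) you chose, then \(P\triangleleft S'=N_{G}(S')\), and Rose's criterion applied at this \(S'\) yields no obstruction at all --- recall that to prove non-pronormality via Lemma~\ref{lem:JohnRoseの補題} you must exhibit \emph{some} containing Sylow whose normalizer fails to normalize \(P\), and your chosen one may not work. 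The repair is either (i) to show that every involution also lies in some Sylow \(2\)-subgroup as a non-central (reflection) involution --- true, but it needs an argument --- or (ii) to do what the paper does: take as witness a reflection subgroup \(P\ntriangleleft S\) (which exists by part (1) of the lemma), conclude that this particular \(P\) is non-pronormal by Rose, and then transfer the verdict to \emph{all} subgroups isomorphic to \(\mathbb{Z}_{2}\) by Corollary~\ref{cor:PSL,J1,Szの巡回部分群の準正規性は代表1つを見ればok}, since pronormality is uniform across a conjugacy class and all involutions are conjugate. You invoke exactly this corollary in case (B), so the fix is one sentence; but as written, the \(j=1\) step of case (A) rests on a false normality statement and does not cover central involutions. (For \(2\le j\le k-2\) your argument is fine, since no \(V_{4}\) or larger dihedral subgroup of index \(\ge 4\) is normal in \(D_{2^{k}}\).)
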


\begin{proof}
\textbf{Case (A).}
By Lemma~\ref{lem:PSLのシローp部分群とその正規化群} a Sylow $2$-subgroup is $S\simeq D_{2^{k}}$ and $N_{G}(S)=S$. By Lemma~\ref{lem:二面体群の部分群の正規性} every $2$-subgroup of $S$ is cyclic or dihedral.

\emph{(A-1) Subgroups isomorphic to $\mathbb{Z}_{2}$.}
Fix $S$ and choose $P\subset S$ with $P\simeq \mathbb{Z}_{2}$ and $P\ntriangleleft S$ (Lemma~\ref{lem:二面体群の部分群の正規性}). Since $N_{G}(S)=S$, Lemma~\ref{lem:JohnRoseの補題} implies that $P$ is not pronormal in $G$. 
By Corollary~\ref{cor:PSL,J1,Szの巡回部分群の準正規性は代表1つを見ればok}, all subgroups isomorphic to $\mathbb{Z}_{2}$ are non\mbox{-}pronormal.

\emph{(A-2) Cyclic subgroups of order $2^{a}$ with $a\ge 2$.}
Let $P\subset G$ with $P\simeq \mathbb{Z}_{2^{a}}$ and take any Sylow $2$-subgroup $S'$ with $P\subset S'$. Then $S'\simeq D_{2^{k}}$ and $N_{G}(S')=S'$ (Lemma~\ref{lem:PSLのシローp部分群とその正規化群}). By Lemma~\ref{lem:二面体群の部分群の正規性}, every cyclic subgroup of order $>2$ is normal in $D_{2^{k}}$. Hence $P\triangleleft S'=N_{G}(S')$, and Lemma~\ref{lem:JohnRoseの補題} yields that $P$ is pronormal in $G$. 
By Corollary~\ref{cor:PSL,J1,Szの巡回部分群の準正規性は代表1つを見ればok}, all subgroups isomorphic to $\mathbb{Z}_{2^{a}}$ are pronormal.

\emph{(A-3) Dihedral subgroups of index at most $2$.}
Let $P\subset G$ with $P\simeq D_{2^{j}}$ and $j\in\{k,k-1\}$. For any Sylow $2$-subgroup $S'$ with $P\subset S'$ we have $S'\simeq D_{2^{k}}$ and $N_{G}(S')=S'$. Moreover, $D_{2^{k}}$ and $D_{2^{k-1}}$ are normal in $D_{2^{k}}$ (Lemma~\ref{lem:二面体群の部分群の正規性}). Thus $P\triangleleft S'=N_{G}(S')$, and Lemma~\ref{lem:JohnRoseの補題} gives that $P$ is pronormal in $G$. The conclusion holds for every subgroup isomorphic to $D_{2^{k}}$ or $D_{2^{k-1}}$ by the same argument.

\emph{(A-4) Dihedral subgroups of index at least $4$.}
Let $P\subset G$ with $P\simeq D_{2^{j}}$ and $2\le j\le k-2$. For any Sylow $2$-subgroup $S'$ with $P\subset S'$, one has $S'\simeq D_{2^{k}}$ and $N_{G}(S')=S'$, while no such $P$ is normal in $D_{2^{k}}$ (Lemma~\ref{lem:二面体群の部分群の正規性}). Hence $P\ntriangleleft S'=N_{G}(S')$, and Lemma~\ref{lem:JohnRoseの補題} shows that $P$ is not pronormal in $G$. The same reasoning applies to every subgroup isomorphic to $D_{2^{j}}$ with $2\le j\le k-2$.

\smallskip
\textbf{Case (B).}
By Lemma~\ref{lem:PSLのシローp部分群とその正規化群} a Sylow $2$-subgroup is $S\simeq (\mathbb{Z}_{2})^{2}$ and $N_{G}(S)\simeq A_{4}$. 
Fix $S$ and choose $P\subset S$ with $P\simeq \mathbb{Z}_{2}$. In $A_{4}$, no subgroup of order $2$ is normal, so $P\ntriangleleft N_{G}(S)$; hence, by Lemma~\ref{lem:JohnRoseの補題}, $P$ is not pronormal in $G$. 
By Corollary~\ref{cor:PSL,J1,Szの巡回部分群の準正規性は代表1つを見ればok}, all subgroups isomorphic to $\mathbb{Z}_{2}$ are non\mbox{-}pronormal. 
Finally, $S\triangleleft N_{G}(S)$ implies that $S$ is pronormal in $G$ by Lemma~\ref{lem:JohnRoseの補題}.
\end{proof}


In view of the preceding discussion and the reduction given by Proposition~\ref{prop:PSLで準正規性を求めるべき部分群}, the complete classification of pronormal subgroups of \(G\) is stated below.
The sets in \eqref{eq:PHとなるq全体} and \eqref{eq:NPrとなるq全体} determine the case division for the parameter \(q\).
We follow this division in what follows.

\begin{cor}\label{cor:PSLの準正規部分群分類}
Let \(q=p^{n}\) and \(k=v_{2}(|G|)\).
In each case below, every subgroup that is isomorphic to one of the listed groups is non\mbox{-}pronormal, and every other subgroup of \(G\) is pronormal.

\begin{description}[labelwidth=1.7cm]
\item[\textup{(PH–\(\bm{\mathcal{Q}_{2}}\))}] Assume \(q\in\mathcal{Q}_{2}\).
 Then \((\mathbb{Z}_{2})^{\,j}\) with \(1\le j<n\).

\item[\textup{(PH–\(\bm{\mathcal{Q}_{3}}\))}] Assume \(q\in\mathcal{Q}_{3}\).
 Then \(\mathbb{Z}_{2}\) and \((\mathbb{Z}_{3})^{\,j}\) with \(1\le j<n\).

\item[\textup{(PH–\(\bm{\mathcal{P}_{\pm3}}\))}] Assume \(q\in\mathcal{P}_{\pm3}\).
 Then \(\mathbb{Z}_{2}\).

\item[\textup{(PH–\(\bm{\mathcal{E}}\))}] Assume \(q\in\mathcal{E}\).
 For \(q=7\) one obtains \(\mathbb{Z}_{2}\).
 For \(q=17\) one obtains \(\mathbb{Z}_{2}\) and \((\mathbb{Z}_{2})^{2}\).

\item[\textup{(NPr)}] Assume \(q\in\mathcal{P}_{\pm1}^{(2)}\).
 Then \(D_{2^{j}}\) with \(1\le j\le k-2\).
\end{description}
\end{cor}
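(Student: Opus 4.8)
The plan is to assemble the classification directly from the propositions of this subsection, treating each of the five parameter regimes in turn. For a fixed regime I would first invoke Proposition~\ref{prop:PSLで準正規性を求めるべき部分群} to reduce the full subgroup list of Dickson's theorem to the finitely many families whose pronormality is still undecided: by Theorem~\ref{thm:プロハミルトン群分類} (in the (PH) regimes $\mathcal{Q}_2,\mathcal{Q}_3,\mathcal{P}_{\pm3},\mathcal{E}$) or Theorem~\ref{thm:非冪零ならば準正規な単純群分類} (in the (NPr) regime $\mathcal{P}_{\pm1}^{(2)}$), every subgroup outside that reduced list is automatically pronormal, so it suffices to decide the elementary abelian $p$-subgroups $(\mathbb{Z}_p)^j$ with $1\le j<n$, the cyclic subgroups $\mathbb{Z}_d$ with $d\mid\frac{q\pm1}{o}$, and the $2$-power subgroups ($(\mathbb{Z}_2)^2$ in the (PH) regimes, all $D_{2^j}$ in the (NPr) regime).

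Two of these families behave uniformly and I would clear them first. The elementary abelian $p$-subgroups $(\mathbb{Z}_p)^j$ with $1\le j<n$ are non-pronormal by Proposition~\ref{prop:小さな初等p部分群は非準正規}; these supply the $(\mathbb{Z}_2)^j$ entries for $q\in\mathcal{Q}_2$ and the $(\mathbb{Z}_3)^j$ entries for $q\in\mathcal{Q}_3$, and are vacuous in the prime regimes where $n=1$. For the cyclic subgroups I would split on the odd part: if $d$ is odd then $\mathbb{Z}_d$ is pronormal by Proposition~\ref{prop:PSLで、奇数位数巡回部分群は準正規}, and if $d=2^{\ell}m$ with $m\ge3$ odd then $\mathbb{Z}_d$ is pronormal by Proposition~\ref{prop:2べき以外の巡回部分群は準正規}. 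Hence every cyclic subgroup carrying a nontrivial odd factor is pronormal, and the only cyclic subgroups still in question are those of pure $2$-power order, which I fold into the $2$-power step below.

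The remaining, and decisive, step is Proposition~\ref{prop:PSLの非準正規な2部分群}, and this is where the regimes genuinely separate. I would first record the class of $q$ modulo $8$ and the integer $k=v_2(|G|)$. When $q\in\mathcal{Q}_2$ the group order is even, so $\frac{q\pm1}{o}$ is odd and no cyclic or dihedral subgroup of $2$-power order arises from \eqref{eq:cyclic} or \eqref{eq:dihedral}; the conclusion is then just $(\mathbb{Z}_2)^j$. When $q\in\mathcal{Q}_3$ one has $q\equiv3\pmod8$, and when $q\in\mathcal{P}_{\pm3}$ one has $q\equiv\pm3\pmod8$, so both lie in case~(B), where the Sylow $2$-subgroup is $V_4$, the only non-pronormal $2$-subgroup is $\mathbb{Z}_2$, and $(\mathbb{Z}_2)^2$ is pronormal as the Sylow subgroup. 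When $q\in\mathcal{E}$ or $q\in\mathcal{P}_{\pm1}^{(2)}$ one has $q\equiv\pm1\pmod8$, placing us in case~(A), where $D_{2^j}$ is non-pronormal precisely for $1\le j\le k-2$.

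The main obstacle I anticipate is the bookkeeping inside case~(A), where the answer is governed by $k=v_2(|G|)$ and by the fact that $D_2\simeq\mathbb{Z}_2$ and $D_4\simeq(\mathbb{Z}_2)^2$ are simultaneously dihedral and abelian. For $\mathcal{P}_{\pm1}^{(2)}$ this step reads off the full non-pronormal list $D_{2^j}$ with $1\le j\le k-2$ directly. For $\mathcal{E}$, however, the prohamiltonian reduction has already removed the non-abelian dihedral subgroups, so only the abelian members $D_2=\mathbb{Z}_2$ ($j=1$) and $D_4=(\mathbb{Z}_2)^2$ ($j=2$) survive; deciding these forces me to compute $k$ case by case. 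Since $v_2(|\PSL(2,7)|)=3$ gives the range $j\le1$ while $v_2(|\PSL(2,17)|)=4$ gives $j\le2$, exactly $\mathbb{Z}_2$ is non-pronormal for $q=7$, whereas both $\mathbb{Z}_2$ and $(\mathbb{Z}_2)^2$ are non-pronormal for $q=17$. Collecting the per-regime conclusions yields precisely the five lists of the statement, the one persistent pitfall being that $D_2$ and $D_4$ must still be tracked through the $2$-power step even in the (PH) regimes, where Proposition~\ref{prop:PSLで準正規性を求めるべき部分群} has otherwise discarded the dihedral family.
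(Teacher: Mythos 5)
Your proposal is correct and follows essentially the same route as the paper's proof: the reduction via Proposition~\ref{prop:PSLで準正規性を求めるべき部分群}, the two global facts (Propositions~\ref{prop:小さな初等p部分群は非準正規} and~\ref{prop:2べき以外の巡回部分群は準正規}), and the regime-by-regime application of Proposition~\ref{prop:PSLの非準正規な2部分群} using the class of $q$ modulo $8$, including the computations $v_{2}(|\PSL(2,7)|)=3$ and $v_{2}(|\PSL(2,17)|)=4$ for the $\mathcal{E}$ case. Your closing remark about tracking $D_{2}\simeq\mathbb{Z}_{2}$ and $D_{4}\simeq(\mathbb{Z}_{2})^{2}$ through the dihedral-indexed statement even in the (PH) regimes is exactly the bookkeeping the paper performs implicitly.
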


\begin{proof}
By Proposition~\ref{prop:PSLで準正規性を求めるべき部分群} the verification reduces to the case division in \eqref{eq:PHとなるq全体} and \eqref{eq:NPrとなるq全体}.
Throughout the proof, set \(q=p^{n}\).

We record two global facts that hold for every \(q\).
First, by Proposition~\ref{prop:2べき以外の巡回部分群は準正規}, every cyclic subgroup \(\mathbb{Z}_{d}\) with \(d\mid \tfrac{q\pm1}{o}\) and \(d\) not a power of \(2\) is \emph{pronormal}.
Second, by Proposition~\ref{prop:小さな初等p部分群は非準正規}, every elementary abelian \(p\)\nobreakdash-subgroup \((\mathbb{Z}_{p})^{\,j}\) with \(1\le j<n\) is \emph{non\mbox{-}pronormal}.
Consequently the remaining case analysis concerns only \(2\)\nobreakdash-subgroups, while the status of odd order cyclic subgroups and of \((\mathbb{Z}_{p})^{\,j}\) is already settled by these two facts.

\smallskip
\textbf{\(\bm{2}\)-subgroups in the \(\bm{\PH}\) regime.}
If \(q\in\mathcal{Q}_{2}\), then the non\mbox{-}pronormal subgroups are exactly \((\mathbb{Z}_{2})^{\,j}\) with \(1\le j<n\).
If \(q\in\mathcal{Q}_{3}\), then \(q\equiv 3\pmod 8\) holds because \(n\) is odd, so the \(2\)\nobreakdash-subgroup behavior agrees with the \(\mathcal{P}_{\pm3}\) case and Proposition~\ref{prop:PSLの非準正規な2部分群} yields that \(\mathbb{Z}_{2}\) is the unique non\mbox{-}pronormal \(2\)\nobreakdash-subgroup.
Together with the global fact on \((\mathbb{Z}_{p})^{\,j}\), this gives the item \textup{(PH–\(\mathcal{Q}_{3}\))}.
If \(q\in\mathcal{P}_{\pm3}\), then \(q=p\) is a prime, so \(n=1\).
Proposition~\ref{prop:PSLの非準正規な2部分群} gives \(\mathbb{Z}_{2}\) as the unique non\mbox{-}pronormal \(2\)\nobreakdash-subgroup.
There is no subgroup of the form \((\mathbb{Z}_{p})^{\,j}\) with \(1\le j<n\) in this case, and this is why such a family does not appear in \textup{(PH–\(\mathcal{P}_{\pm3}\))}.
If \(q\in\mathcal{E}\), then \(|\PSL(2,7)|=168\) so \(v_{2}(|G|)=3\) and \(|\PSL(2,17)|=2448\) so \(v_{2}(|G|)=4\).
A Sylow \(2\)\nobreakdash-subgroup is \(D_{2^{k}}\) with \(k=v_{2}(|G|)\).
Proposition~\ref{prop:PSLの非準正規な2部分群} implies that the non\mbox{-}pronormal \(2\)\nobreakdash-subgroups are \(D_{2^{j}}\) with \(1\le j\le k-2\).
This yields \(\mathbb{Z}_{2}\) for \(q=7\) and \(\mathbb{Z}_{2}\) together with \((\mathbb{Z}_{2})^{2}\) for \(q=17\) in (PH-$\mathcal{E}$).

\textbf{\(\bm{2}\)\nobreakdash-subgroups in the \(\bm{\NPr}\) regime.}
Assume \(q\in\mathcal{P}_{\pm1}^{(2)}\).
Then \(q=p\) is a prime and therefore \(n=1\).
Hence there is no subgroup of the form \((\mathbb{Z}_{p})^{\,j}\) with \(1\le j<n\).
By Proposition~\ref{prop:PSLの非準正規な2部分群} the non\mbox{-}pronormal \(2\)\nobreakdash-subgroups are precisely the dihedral groups \(D_{2^{j}}\) with \(1\le j\le k-2\).
\end{proof}

\subsubsection{Meet}
In this subsection we prove that \(\PrN(G)\) is not closed under meet for every value of \(q\) that appears in \eqref{eq:PHとなるq全体} and \eqref{eq:NPrとなるq全体}.

\begin{prop}\label{prop:PSLの準正規はmeetで閉じない}
For all \(q\) under consideration the family \(\PrN(G)\) is not closed under meet.
\end{prop}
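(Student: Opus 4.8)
The plan is to prove non-closure \emph{uniformly} across all admissible $q$ by producing, in each case, two pronormal subgroups whose intersection is isomorphic to $\mathbb{Z}_2$, and then observing that $\mathbb{Z}_2$ is non-pronormal. The latter is free: inspecting Corollary~\ref{cor:PSLの準正規部分群分類} case by case, a subgroup isomorphic to $\mathbb{Z}_2$ always occurs among the non-pronormal ones --- explicitly as $\mathbb{Z}_2$ in the items \textup{(PH--$\mathcal{Q}_3$)}, \textup{(PH--$\mathcal{P}_{\pm3}$)}, \textup{(PH--$\mathcal{E}$)}, as $D_2\simeq\mathbb{Z}_2$ in the item \textup{(NPr)} (where $k=v_2(|G|)\ge 3$, so $j=1$ lies in the non-pronormal range $1\le j\le k-2$), and as $(\mathbb{Z}_2)^1$ in the item \textup{(PH--$\mathcal{Q}_2$)} via Proposition~\ref{prop:小さな初等p部分群は非準正規} because $n\ge 2$. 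Thus it suffices to realize a single such $\mathbb{Z}_2$ as a meet of two pronormal subgroups.

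For the two pronormal subgroups I would take the normalizers of the two cyclic tori. Fix an involution $t\in G$. By Theorem~\ref{thm:dicsonのPSL部分群分類定理} and Lemma~\ref{lem:PSLのシローp部分群とその正規化群}, $G$ contains the dihedral torus normalizers $A:=N_G(T^+)$ and $B:=N_G(T^-)$, of orders $q-1$ and $q+1$ when $q$ is odd, and of orders $2(q-1)$ and $2(q+1)$ when $q$ is even. Since $\PSL(2,q)$ has a single conjugacy class of involutions and both $A$ and $B$ contain reflections, I may, after replacing $A$ and $B$ by suitable $G$-conjugates, assume $t\in A\cap B$. The crux is then a one-line arithmetic fact: $\gcd(|A|,|B|)=2$ in both parities, because $\gcd(q-1,q+1)=2$ for odd $q$ and $=1$ for even $q$. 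Hence $|A\cap B|\mid 2$, and since $t\in A\cap B$ we conclude $A\cap B=\langle t\rangle\cong\mathbb{Z}_2$.

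The remaining step, and the one I expect to demand the most care, is to check that $A$ and $B$ are pronormal, since this is where the regimes genuinely diverge. In the prohamiltonian regimes ($q\in\mathcal{Q}_2\cup\mathcal{Q}_3\cup\mathcal{P}_{\pm3}\cup\mathcal{E}$) both $A$ and $B$ are \emph{non-abelian} dihedral groups --- their orders are at least $6$ for every admissible $q\ge 4$ --- so pronormality is immediate from Theorem~\ref{thm:プロハミルトン群分類}. In the NPr regime ($q\in\mathcal{P}_{\pm1}^{(2)}$) exactly one of $q-1,q+1$ is a power of $2$; on that side the torus normalizer is a dihedral $2$-group whose order equals $2^{\,v_2(|G|)}$, so it is in fact a full Sylow $2$-subgroup (case~\eqref{eq:case-b} of Lemma~\ref{lem:PSLのシローp部分群とその正規化群}) and hence pronormal by Lemma~\ref{lem:準正規部分群のクラス}; on the other side $q\mp1=2m$ with $m$ odd and $m>1$, so that dihedral group is non-nilpotent and pronormal by Theorem~\ref{thm:非冪零ならば準正規な単純群分類}. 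Combining, $A,B\in\PrN(G)$ while $A\cap B\cong\mathbb{Z}_2\notin\PrN(G)$, so $\PrN(G)$ is not closed under meet for every $q$ under consideration.

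I would close by noting the robustness of the mechanism: all that is used is a non-pronormal $\mathbb{Z}_2$ sitting inside two pronormal subgroups whose orders share only the factor $2$. The torus-normalizer pair is chosen purely because it makes the computation $\gcd(|A|,|B|)=2$ transparent; in concrete small cases (e.g.\ $\PSL(2,4)\cong A_5$, where one may take $A\cong D_6$ and $B\cong D_{10}$) the same conclusion $A\cap B\cong\mathbb{Z}_2$ is visible directly from the orders.
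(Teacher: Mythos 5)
Your proposal is correct and takes essentially the same route as the paper: both use the pair of maximal dihedral subgroups $D_{2(q-1)/o}$ and $D_{2(q+1)/o}$, conjugate them so they share a single involution (via the one-class-of-involutions/csc property), pin the intersection down to $\mathbb{Z}_{2}$ by the $\gcd$ of the orders, and establish their pronormality by the same case split (non-abelian in the PH regimes; Sylow $2$-subgroup on the power-of-$2$ side and non-nilpotent on the other side in the NPr regime), concluding with the non-pronormality of $\mathbb{Z}_{2}$ from Corollary~\ref{cor:PSLの準正規部分群分類}. The only (cosmetic) difference is that your Lagrange/$\gcd$ bound on $|A\cap B|$ is stated slightly more cleanly than the paper's argument via $A\cap X^{g}=\{\id\}$.
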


\begin{proof}
We first verify that every subgroup isomorphic to \(D_{2(q\pm1)/o}\) is pronormal.

PH regime. By Theorem~\ref{thm:非冪零ならば準正規な単純群分類} every nonabelian subgroup of \(G\) is pronormal. The subgroups \(D_{2(q\pm1)/o}\) are nonabelian. Hence each subgroup isomorphic to \(D_{2(q\pm1)/o}\) is pronormal.

NPr regime. Here $q\in\mathcal{P}_{\pm1}^{(2)}$ is an odd prime and let $o=\gcd(q-1,2)$. Then
\[
v_{2}\!\left(\left|D_{2\cdot\frac{q\pm1}{o}}\right|\right)=v_{2}(q\pm1)\in\{1,\,v_{2}(|G|)\}.
\]
If $v_{2}(q\pm1)=1$, the dihedral group $D_{2\cdot\frac{q\pm1}{o}}$ is non-nilpotent, hence pronormal because $G$ is an NPr group. 
If $v_{2}(q\pm1)=v_{2}(|G|)$, then $D_{2\cdot\frac{q\pm1}{o}}$ is a Sylow $2$-subgroup of $G$, hence pronormal. 
Thus every subgroup isomorphic to $D_{2(q\pm1)/o}$ is pronormal.

We construct the meet. By Theorem~\ref{thm:dicsonのPSL部分群分類定理} the group \(G\) contains dihedral subgroups of the form \(D_{2(q+1)/o}\) and \(D_{2(q-1)/o}\) with \(o=\gcd(q-1,2)\). Choose
$
A\simeq \mathbb{Z}_{(q-1)/o},
X\simeq \mathbb{Z}_{(q+1)/o},
B,Y\simeq \mathbb{Z}_{2},
$
and set
$
H:=A\rtimes B  \simeq D_{2(q-1)/o},
K:=X\rtimes Y  \simeq D_{2(q+1)/o}.
$
By Lemma~\ref{lem:PSL,J1,SzはCSC群} all subgroups of order \(2\) are conjugate in \(G\). There exists \(g\in G\) with \(Y^{g}=B\). Then \(K^{g}=X^{g}\rtimes B\).

We claim that \(H\cap K^{g}=B\). One has
$
\gcd\!\left(|A|, |X^{g}|\right)=\gcd\!\left(\tfrac{q-1}{o},\,\tfrac{q+1}{o}\right)=1,
$
hence \(A\cap X^{g}=\{\id\}\). Both \(H\) and \(K^{g}\) contain the same involution \(B\). Therefore
$
H\cap K^{g}=B\ \simeq\ \mathbb{Z}_{2}.
$

By Corollary~\ref{cor:PSLの準正規部分群分類} each of \(H\) and \(K^{g}\) is pronormal for every \(q\) that appears in \eqref{eq:PHとなるq全体} and \eqref{eq:NPrとなるq全体}. The subgroup \(\mathbb{Z}_{2}\) is nonpronormal for every such \(q\) by the same corollary. Hence the meet of the two pronormal subgroups \(H\) and \(K^{g}\) equals the nonpronormal subgroup \(B\). This proves that \(\PrN(G)\) is not closed under meet.
\end{proof}

\begin{prop}\label{prop:PSLの準正規はjoinで閉じる}
For all \(q=p^{n}\) under consideration, \(\PrN(G)\) is closed under join.
\end{prop}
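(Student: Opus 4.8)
The plan is to argue by contraposition, exploiting the fact that Corollary~\ref{cor:PSLの準正規部分群分類} gives an \emph{explicit and short} list of the non-pronormal subgroups of $G$. It suffices to treat the join of two pronormal subgroups $H_{1},H_{2}$, since an arbitrary finite join is obtained by iterating the binary operation via $\langle H_{1},\dots,H_{m}\rangle=\langle\langle H_{1},\dots,H_{m-1}\rangle,H_{m}\rangle$, and the general claim then follows by induction. So I set $J:=\langle H_{1},H_{2}\rangle$, assume for contradiction that $J$ is non-pronormal, and aim to produce a non-pronormal generator among $H_{1},H_{2}$. A first reduction removes the trivial possibility that a generator already equals $J$: if $H_{i}=J$ then $H_{i}$ is itself non-pronormal and we are done, so I may assume $H_{1},H_{2}\subsetneq J$ are proper. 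By Corollary~\ref{cor:PSLの準正規部分群分類}, $J$ is isomorphic to one of the listed types, and I would split the argument according to that type.

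The easy types are those in which every proper nontrivial subgroup is again non-pronormal, so that $J$ simply has no nontrivial pronormal subgroups. If $J\simeq(\mathbb{Z}_{p})^{\,j}$ with $1\le j<n$, then every nontrivial subgroup of $J$ is $(\mathbb{Z}_{p})^{\,i}$ with $1\le i\le j<n$, hence non-pronormal by Proposition~\ref{prop:小さな初等p部分群は非準正規}; since $J$ is nontrivial, at least one $H_{i}$ is nontrivial and therefore non-pronormal. The cases $J\simeq\mathbb{Z}_{2}$, and $J\simeq(\mathbb{Z}_{2})^{2}$ in the regimes where it is non-pronormal, are handled identically: there every nontrivial proper subgroup is a non-pronormal $\mathbb{Z}_{2}$, so one of the generators is again non-pronormal.

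The remaining and genuinely substantive case is $J\simeq D_{2^{\,j}}$ with $3\le j\le k-2$ in the NPr regime, and this is where I expect the main obstacle, because $J$ now \emph{does} possess nontrivial pronormal subgroups, so a real confinement argument is required rather than a one-line observation. The key is that $J$ cannot be generated by proper pronormal subgroups. By Proposition~\ref{prop:PSLの非準正規な2部分群}, every dihedral subgroup $D_{2^{\,c}}$ of $J$ has $c\le j\le k-2$ and is therefore non-pronormal; in particular every reflection generates a non-pronormal $\mathbb{Z}_{2}=D_{2}$. Consequently any pronormal subgroup of $J$ contains no reflection, and hence lies inside the cyclic rotation subgroup $\langle r\rangle\simeq\mathbb{Z}_{2^{\,j-1}}$ of index $2$. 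Since $\langle r\rangle$ is cyclic, the join of any two of its subgroups is again contained in $\langle r\rangle$, so $\langle H_{1},H_{2}\rangle\subseteq\langle r\rangle\subsetneq J$, contradicting $J=\langle H_{1},H_{2}\rangle$. Therefore at least one $H_{i}$ fails to be pronormal.

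In every case the contraposition yields a non-pronormal generator, which proves that $\PrN(G)$ is closed under join for all $q$ under consideration. The only delicate point is the dihedral case: one must correctly enumerate the subgroups of $D_{2^{\,j}}$, recognize that any subgroup meeting the reflection coset is itself dihedral (hence non-pronormal under the index condition $j\le k-2$), and then use the chain structure of the cyclic rotation subgroup to rule out recovering $J$ from pronormal pieces.
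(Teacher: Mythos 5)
Your proof is correct and takes essentially the same approach as the paper's: both arguments show, regime by regime, that a join of pronormal subgroups can never be isomorphic to anything on the explicit non-pronormal list of Corollary~\ref{cor:PSLの準正規部分群分類}, dispatching the elementary abelian and $\mathbb{Z}_{2}$/$(\mathbb{Z}_{2})^{2}$ cases by noting that the generators would then sit inside a group all of whose nontrivial subgroups are non-pronormal, and handling the NPr dihedral case via the structure of dihedral $2$-groups. The only organizational difference is in that last case: you work contrapositively inside the putative join $D_{2^{j}}$ ($j\le k-2$) and confine every pronormal subgroup of it to the rotation subgroup $\langle r\rangle$ (since any subgroup containing a reflection is dihedral, hence non-pronormal by Proposition~\ref{prop:PSLの非準正規な2部分群}), whereas the paper case-splits on the possible shapes of the pronormal $2$-subgroup generators ($\mathbb{Z}_{2^{a}}$ with $a\ge 2$, or $D_{2^{k-1}}$, $D_{2^{k}}$); both hinge on the same fact that everything outside the reflections lies in the cyclic rotation subgroup.
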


\begin{proof}
We use Corollary~\ref{cor:PSLの準正規部分群分類}.  
Every cyclic subgroup \(\mathbb{Z}_{d}\) of odd order with \(d\mid \tfrac{q\pm1}{o}\) is pronormal by Proposition~\ref{prop:2べき以外の巡回部分群は準正規}.  
Every elementary abelian \(p\)\nobreakdash-subgroup \((\mathbb{Z}_{p})^{\,j}\) with \(1\le j<n\) is non\mbox{-}pronormal by Proposition~\ref{prop:小さな初等p部分群は非準正規}.  
We show that any join of pronormal subgroups never equals a subgroup from the non\mbox{-}pronormal list.

\smallskip
\textup{\textbf{(PH–\(\bm{\mathcal{Q}_{2}}\)).}}  
The only non\mbox{-}pronormal subgroups are \((\mathbb{Z}_{2})^{\,j}\) with \(1\le j<n\).  
If \(\langle H,K\rangle=(\mathbb{Z}_{2})^{\,j}\), then \(H\subset(\mathbb{Z}_{2})^{\,j}\) and \(K\subset(\mathbb{Z}_{2})^{\,j}\).  
Both \(H\) and \(K\) are then non\mbox{-}pronormal by Proposition~\ref{prop:小さな初等p部分群は非準正規}, which contradicts that \(H\) and \(K\) are pronormal.  
Thus the join is pronormal.

\textup{\textbf{(PH–\(\bm{\mathcal{Q}_{3}}\)).}}   
The non\mbox{-}pronormal subgroups are \(\mathbb{Z}_{2}\) and \((\mathbb{Z}_{3})^{\,j}\) with \(1\le j<n\).  
If \(\langle H,K\rangle=\mathbb{Z}_{2}\), then \(H\) and \(K\) are both subgroups of order \(2\), hence both are non\mbox{-}pronormal, a contradiction.  
If \(\langle H,K\rangle=(\mathbb{Z}_{3})^{\,j}\), then \(H\) and \(K\) are both \(3\)\nobreakdash-groups contained in \((\mathbb{Z}_{3})^{\,j}\) and again both are non\mbox{-}pronormal by Proposition~\ref{prop:小さな初等p部分群は非準正規}.  
Thus the join is pronormal.

\textup{\textbf{(PH–\(\bm{\mathcal{P}_{\pm3}}\)).}}  
Here \(q=p\) so \(n=1\).  
The only non\mbox{-}pronormal subgroup is \(\mathbb{Z}_{2}\).  
If \(\langle H,K\rangle=\mathbb{Z}_{2}\), then both factors are subgroups of order \(2\), hence both are non\mbox{-}pronormal, a contradiction.  
Thus the join is pronormal.

\textup{\textbf{(PH–\(\bm{\mathcal{E}}\)).}}  
For \(q=7\) the only non\mbox{-}pronormal subgroup is \(\mathbb{Z}_{2}\).  
If \(\langle H,K\rangle=\mathbb{Z}_{2}\), then both \(H\) and \(K\) are subgroups of order \(2\), which is impossible since \(\mathbb{Z}_{2}\) is non\mbox{-}pronormal.  
For \(q=17\) the non\mbox{-}pronormal subgroups are \(\mathbb{Z}_{2}\) and \((\mathbb{Z}_{2})^{2}\).  
If \(\langle H,K\rangle\) equals one of these, then both \(H\) and \(K\) lie inside a non\mbox{-}pronormal elementary abelian \(2\)\nobreakdash-group, hence both are non\mbox{-}pronormal, a contradiction.  
Thus the join is pronormal.

\smallskip
\textup{\textbf{(\(\bm{\NPr}\)).}}  
Set \(k=v_{2}(|G|)\).  
By Corollary~\ref{cor:PSLの準正規部分群分類} the non\mbox{-}pronormal subgroups are exactly the dihedral groups \(D_{2^{j}}\) with \(1\le j\le k-2\).  
Let \(H\) and \(K\) be pronormal.  
If one factor is not a \(2\)\nobreakdash-subgroup then the join is not a \(2\)\nobreakdash-group and cannot be \(D_{2^{j}}\).  
Assume both \(H\) and \(K\) are \(2\)\nobreakdash-subgroups.  
By Proposition~\ref{prop:PSLの非準正規な2部分群} each of \(H\) and \(K\) is either \(\mathbb{Z}_{2^{a}}\) with \(a\ge 2\) or one of \(D_{2^{\,k-1}}\) and \(D_{2^{\,k}}\).  
If one factor contains \(D_{2^{\,k-1}}\) or \(D_{2^{\,k}}\), then the join has order at least \(2^{\,k-1}\) and is not \(D_{2^{j}}\) with \(j\le k-2\).  
If both factors are cyclic \(\mathbb{Z}_{2^{a}}\) and \(\mathbb{Z}_{2^{b}}\) with \(a,b\ge 2\), then the join is not dihedral since all elements of order greater than \(2\) lie in the rotation subgroup in a dihedral group of order at least \(8\) and this forces the join to be cyclic.  
Hence the join is not one of the non\mbox{-}pronormal dihedral groups.  
Therefore the join is pronormal.

We have shown that the join of pronormal subgroups never lands in the non\mbox{-}pronormal list in any regime.  
Hence \(\PrN(G)\) is closed under join.
\end{proof}

\section{Pronormal Subgroups and Lattice Structure of $\bm{J_{1}}$}

In this section, we discuss the pronormal subgroups and their lattice structure for $J_{1}$.

\subsection{Preliminaries for the Classification of Pronormal Subgroups in $\bm{J_{1}}$}

\begin{defi}\label{defi:J1におけるnotation}
We use the following notation throughout this section.
{\allowdisplaybreaks
\begin{flalign}
& \btri G := J_{1}: \text{ the first Janko group,}&& \\
& \btri \PrN(G) := \{\,H \subset G \mid H \text{ is pronormal in } G\,\}.
\end{flalign}
}

\end{defi}

For the subgroups of $J_1$, the following Table \ref{table:J1の部分群と準正規性} is known.

{\small
\begin{longtable}{|c|c|c|c||c|c|c|c|}
\caption{Subgroups and Details of $J_1$}
\label{table:J1の部分群と準正規性}\\
\hline
\multicolumn{8}{|c|}{\textbf{Subgroups and Details of $\bm{J_1}$}} \\
\hline
\textbf{Order} & \textbf{Structure} & \textbf{Abelian} & \textbf{Details} & \textbf{Order} & \textbf{Structure} & \textbf{Abelian} & \textbf{Details} \\
\hline
\endhead
$1$ & $1$ & Yes & & $22$ & $D_{22}$ & No & \\
\hline
\textbf{2} & \textbf{$\bm{\mathbb{Z}_2}$} & \textbf{Yes} &  & $24$ & $\mathbb{Z}_2 \times A_4$ & No & \\
\hline
$3$ & $\mathbb{Z}_3$ & Yes & (Sylow) & $30$ & $\mathbb{Z}_3 \times D_{10}$ & No & \\
\hline
\textbf{4} & $\bm{(\mathbb{Z}_{2})^{2}}$ & \textbf{Yes} & & $38$ & $D_{38}$ & No & \\
\hline
$5$ & $\mathbb{Z}_5$ & Yes & (Sylow) & $42$ & $\mathbb{Z}_7 : \mathbb{Z}_6$ & No & \\
\hline
$6$ & $S_3$ & No & & $55$ & $\mathbb{Z}_{11} : \mathbb{Z}_5$ & No & \\
\hline
$7$ & $\mathbb{Z}_7$ & Yes & (Sylow) & $56$ & $(\mathbb{Z}_2)^{3} : \mathbb{Z}_7\simeq \AGL(1,8)$ & No & \\
\hline
$8$ & $(\mathbb{Z}_2)^{3}$ & Yes & (Sylow) & $57$ & $\mathbb{Z}_{19} : \mathbb{Z}_3$ & No & \\
\hline
$10$ & $D_{10}$ & No & & $60$ & $A_5$ & No & \\
\hline
$11$ & $\mathbb{Z}_{11}$ & Yes & (Sylow) & $110$ & $\mathbb{Z}_{11} : \mathbb{Z}_{10}$ & No & \\
\hline
$12$ & $A_4$ & No & & $114$ & $\mathbb{Z}_{19} : \mathbb{Z}_6$ & No & \\
\hline
$14$ & $D_{14}$ & No & & $120$ & $\mathbb{Z}_2 \times A_5$ & No & \\
\hline
$15$ & $\mathbb{Z}_{15}$ & Yes & (Hall-$\{3,5\}$) & $168$ & $(\mathbb{Z}_{2})^{3}:(\mathbb{Z}_{7}:\mathbb{Z}_{3})\simeq \AGaL(1,8)$ & No & \\
\hline
$19$ & $\mathbb{Z}_{19}$ & Yes & (Sylow) & $660$ & $\mathrm{PSL}(2,11)$ & No & \\
\hline
$20$ & $D_{20}$ & No & & $175560$ & $J_1$ & No & \\
\hline
$21$ & $\mathbb{Z}_7 : \mathbb{Z}_3$ & No & & & & & \\
\hline
\end{longtable}
}
\normalsize

\subsection{Classification of Pronormal Subgroups of $\bm{J_1}$}
\begin{prop}\label{lem:C2,C22以外はJ1の準正規部分群}
Every subgroup \(H\subset G\) with 
\(H\not\simeq \mathbb{Z}_2, (\mathbb{Z}_2)^2\) is pronormal.
\end{prop}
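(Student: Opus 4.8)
The plan is to split the subgroups of $G=J_1$ into non-abelian and abelian ones and to dispatch each class by a result already in hand. The non-abelian subgroups require no work at all: Theorem~\ref{thm:プロハミルトン群分類} states that $J_1$ is prohamiltonian, so every non-abelian subgroup of $G$ is automatically pronormal. Consequently the entire content of the proposition lies in the abelian subgroups, and in particular the two exceptional types $\mathbb{Z}_2$ and $(\mathbb{Z}_2)^2$ can only occur among them.

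Next I would read the abelian subgroups directly from Table~\ref{table:J1の部分群と準正規性}. Up to isomorphism these are $1$, $\mathbb{Z}_2$, $(\mathbb{Z}_2)^2$, $(\mathbb{Z}_2)^3$, $\mathbb{Z}_3$, $\mathbb{Z}_5$, $\mathbb{Z}_7$, $\mathbb{Z}_{11}$, $\mathbb{Z}_{19}$, and $\mathbb{Z}_{15}$. Since $|J_1| = 2^3\cdot 3\cdot 5\cdot 7\cdot 11\cdot 19$, each of $(\mathbb{Z}_2)^3$, $\mathbb{Z}_3$, $\mathbb{Z}_5$, $\mathbb{Z}_7$, $\mathbb{Z}_{11}$, $\mathbb{Z}_{19}$ has order equal to the full power of its prime, so each is a Sylow subgroup and is pronormal by Lemma~\ref{lem:準正規部分群のクラス}. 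The subgroup $\mathbb{Z}_{15}$ has order $3\cdot 5$, hence is a Hall $\{3,5\}$-subgroup; as $J_1$ is finite simple, Lemma~\ref{lem:準正規部分群のクラス} again yields pronormality. The trivial subgroup is pronormal directly from Definition~\ref{defi:準正規部分群の定義}. This leaves exactly $\mathbb{Z}_2$ and $(\mathbb{Z}_2)^2$, which are precisely the proper nontrivial subgroups of the Sylow $2$-subgroup $(\mathbb{Z}_2)^3$ and are the stated exceptions.

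To promote these statements about a single representative to all subgroups of a given isomorphism type, I would invoke the csc-property: by Corollary~\ref{cor:PSL,J1,Szの巡回部分群の準正規性は代表1つを見ればok} pronormality is uniform across all cyclic subgroups of a fixed order, so for the cyclic types $\mathbb{Z}_3,\dots,\mathbb{Z}_{19},\mathbb{Z}_{15}$ it suffices that one representative is pronormal. For $(\mathbb{Z}_2)^3$ every subgroup of this type is a Sylow $2$-subgroup, hence all are mutually conjugate, and Lemma~\ref{lem:共役類の準正規性} propagates pronormality to all of them. Combining the prohamiltonian dichotomy with the Sylow and Hall pronormality of Lemma~\ref{lem:準正規部分群のクラス} then covers every subgroup except $\mathbb{Z}_2$ and $(\mathbb{Z}_2)^2$.

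The argument presents no genuine obstacle; its only delicate points are bookkeeping. The step needing the most care is confirming that Table~\ref{table:J1の部分群と準正規性} gives an \emph{exhaustive} list of the isomorphism types of abelian subgroups, so that no abelian subgroup slips past the case analysis, together with verifying the factorization of $|J_1|$ that justifies labelling $(\mathbb{Z}_2)^3,\mathbb{Z}_3,\dots,\mathbb{Z}_{19}$ as Sylow and $\mathbb{Z}_{15}$ as Hall. Once those bookkeeping facts are in place, the conclusion is immediate.
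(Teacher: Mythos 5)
Your proposal is correct and follows essentially the same route as the paper: prohamiltonicity (Theorem~\ref{thm:プロハミルトン群分類}) handles the non-abelian subgroups, and Lemma~\ref{lem:準正規部分群のクラス} plus Table~\ref{table:J1の部分群と準正規性} handles the abelian ones as Sylow or Hall subgroups, leaving only $\mathbb{Z}_2$ and $(\mathbb{Z}_2)^2$. Your final propagation step via the csc-property and Lemma~\ref{lem:共役類の準正規性} is harmless but unnecessary, since Lemma~\ref{lem:準正規部分群のクラス} already asserts pronormality for \emph{every} Sylow $p$-subgroup and \emph{every} Hall $\pi$-subgroup, not merely one representative.
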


\begin{proof}
By Theorem \ref{thm:プロハミルトン群分類}, $G$ is a prohamiltonian group, so every non-abelian subgroup is pronormal. The order of $G$ is $|G| = 175560 = 2^3 \cdot 3 \cdot 5 \cdot 7 \cdot 11 \cdot 19$. By Lemma \ref{lem:準正規部分群のクラス}, the Sylow $p$-subgroups and Hall $\pi$-subgroups of $G$ are pronormal. From Table \ref{table:J1の部分群と準正規性}, among the abelian subgroups, if we exclude these Sylow and Hall subgroups, only $\mathbb{Z}_2$ and $(\mathbb{Z}_2)^2$ remain. Therefore, all subgroups except these two $2$-subgroups are pronormal.
\end{proof}

By Theorem \ref{thm:プロハミルトン群分類} and Table \ref{table:J1の部分群と準正規性}, among the subgroups of $G$, the parts whose pronormality is unknown are $\mathbb{Z}_{2}$ and $(\mathbb{Z}_{2})^{2}$. Our goal is to clarify the pronormality of these subgroups.

\begin{remark}[\cite{DixonMortimer1996,Janko1966}]\label{rem:J1のシロー2部分群の正規化群とその正規部分群}

The Sylow $2$-subgroup of $G$ is $S:=(\mathbb{Z}_{2})^{3}$, and $N_{G}(S)=(\mathbb{Z}_{2})^{3}:(\mathbb{Z}_{7}:\mathbb{Z}_{3})\simeq \AGaL(1,8)$. The four subgroups $\{\id\}$, $(\mathbb{Z}_{2})^{3}$, $\AGL(1,8)$, and $\AGaL(1,8)$ are the normal subgroups of $\AGaL(1,8)$, and all are characteristic. All other subgroups are non-normal.
\end{remark}

\begin{cor}\label{cor:J1の準正規部分群の分類}
All subgroups of $G$ are pronormal except $\mathbb{Z}_{2}$ and $(\mathbb{Z}_{2})^{2}$.
\end{cor}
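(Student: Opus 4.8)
The plan is to combine the affirmative direction already established in Proposition~\ref{lem:C2,C22以外はJ1の準正規部分群} with a proof that the two remaining candidates $\mathbb{Z}_{2}$ and $(\mathbb{Z}_{2})^{2}$ are genuinely non-pronormal. Since Proposition~\ref{lem:C2,C22以外はJ1の準正規部分群} already shows that every subgroup not isomorphic to $\mathbb{Z}_{2}$ or $(\mathbb{Z}_{2})^{2}$ is pronormal, it suffices to dispose of these two $2$-subgroups, and both are handled by John Rose's criterion (Lemma~\ref{lem:JohnRoseの補題}) together with the normalizer structure recorded in Remark~\ref{rem:J1のシロー2部分群の正規化群とその正規部分群}.

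First I would recall that every Sylow $2$-subgroup of $G$ is $S\simeq(\mathbb{Z}_{2})^{3}$ with $N_{G}(S)\simeq\AGaL(1,8)$, whose only normal subgroups are $\{\id\}$, $(\mathbb{Z}_{2})^{3}$, $\AGL(1,8)$, and $\AGaL(1,8)$, of orders $1,8,56,168$ respectively. Next, fix any subgroup $P$ with $P\simeq\mathbb{Z}_{2}$ or $P\simeq(\mathbb{Z}_{2})^{2}$. Then $P$ has order $2$ or $4$ and is contained in some Sylow $2$-subgroup $S$; since $S\triangleleft N_{G}(S)$, the subgroup $P$ lies in $N_{G}(S)$. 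As $|P|\in\{2,4\}$ is not among the orders $\{1,8,56,168\}$ of the normal subgroups of $N_{G}(S)$, we conclude $P\ntriangleleft N_{G}(S)$. By Lemma~\ref{lem:JohnRoseの補題}, $P$ fails to be pronormal in $G$.

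For $\mathbb{Z}_{2}$ the conclusion then propagates to all subgroups of order $2$ by Corollary~\ref{cor:PSL,J1,Szの巡回部分群の準正規性は代表1つを見ればok}, since $J_{1}$ is a csc-group; for $(\mathbb{Z}_{2})^{2}$ the same John Rose argument applies verbatim to each copy, because every Sylow $2$-subgroup has normalizer isomorphic to $\AGaL(1,8)$ with the same list of normal-subgroup orders. Combining these two non-pronormality statements with Proposition~\ref{lem:C2,C22以外はJ1の準正規部分群} yields the dichotomy asserted in the corollary.

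I expect no genuine obstacle here: all the heavy lifting — the identification of $N_{G}(S)$ and the determination of its normal subgroups — is imported through Remark~\ref{rem:J1のシロー2部分群の正規化群とその正規部分群}, so the argument reduces to a bookkeeping check that $2$ and $4$ do not appear among the permitted orders. The only point requiring minor care is that John Rose's criterion demands $P\triangleleft N_{G}(S)$ for \emph{every} Sylow $2$-subgroup $S\supset P$, so exhibiting a single such $S$ where normality fails already defeats pronormality; this is immediate since all these normalizers are isomorphic and share the same normal-subgroup orders.
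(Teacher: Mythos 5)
Your proposal is correct and follows essentially the same route as the paper: the affirmative part is delegated to Proposition~\ref{lem:C2,C22以外はJ1の準正規部分群}, and non-pronormality of $\mathbb{Z}_{2}$ and $(\mathbb{Z}_{2})^{2}$ is obtained from Lemma~\ref{lem:JohnRoseの補題} combined with the list of normal subgroups of $N_{G}(S)\simeq\AGaL(1,8)$ in Remark~\ref{rem:J1のシロー2部分群の正規化群とその正規部分群}. Your explicit order comparison ($2,4\notin\{1,8,56,168\}$) and the csc-propagation step are harmless refinements of what the paper states directly.
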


\begin{proof}
Let $S$ be a Sylow $2$-subgroup of $G$. Then $S\simeq (\mathbb{Z}_{2})^{3}$. Let $P\subset S$ with $P\simeq \mathbb{Z}_{2}$. By Lemma~\ref{lem:JohnRoseの補題}, $P$ is pronormal in $G$ if and only if $P\triangleleft N_{G}(S)$. 

By Remark~\ref{rem:J1のシロー2部分群の正規化群とその正規部分群}, no subgroup $P\simeq \mathbb{Z}_{2}$ is normal in $N_{G}(S)$. Hence, by the above equivalence, $P$ is not pronormal in $G$. The case $P\simeq (\mathbb{Z}_{2})^{2}$ is analogous. Under the same hypothesis, $P$ is not normal in $N_{G}(S)$ and therefore $P$ is not pronormal in $G$. The remaining subgroups are pronormal by Proposition~\ref{lem:C2,C22以外はJ1の準正規部分群}.
\end{proof}

\subsection{Investigation of Whether $\bm{\PrN(J_{1})}$ Form a Lattice}

\subsubsection{meet}

\begin{prop}\label{prop:J1の準正規はmeetで閉じない}
$\PrN(G)$ is not closed under meets.  
\end{prop}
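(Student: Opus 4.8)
The plan is to reuse the meet-construction from the \(\PSL(2,q)\) case (Proposition~\ref{prop:PSLの準正規はmeetで閉じない}): exhibit two pronormal subgroups whose intersection is one of the two non-pronormal subgroups \(\mathbb{Z}_2\) or \((\mathbb{Z}_2)^2\) identified in Corollary~\ref{cor:J1の準正規部分群の分類}. The cleanest choice is a pair of dihedral subgroups whose orders share only the factor \(2\). From Table~\ref{table:J1の部分群と準正規性}, the group \(G=J_1\) contains subgroups \(H\simeq D_{10}\) and \(K\simeq D_{14}\). Both are non-abelian, so by Theorem~\ref{thm:プロハミルトン群分類} (prohamiltonicity of \(J_1\)) each is pronormal in \(G\).

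First I would write \(H=X\rtimes B\) with \(X\simeq\mathbb{Z}_5\) and \(B\simeq\mathbb{Z}_2\), and \(K=Y\rtimes C\) with \(Y\simeq\mathbb{Z}_7\) and \(C\simeq\mathbb{Z}_2\). To force \(H\) and \(K\) to share an involution, I would invoke Lemma~\ref{lem:PSL,J1,SzはCSC群}: since \(J_1\) is a csc-group, all subgroups of order \(2\) are conjugate, so there exists \(g\in G\) with \(C^{g}=B\). By Lemma~\ref{lem:共役類の準正規性}, the conjugate \(K^{g}\) is again pronormal. Now \(B\subset H\cap K^{g}\), and since \(\gcd(|H|,|K^{g}|)=\gcd(10,14)=2\), Lagrange forces \(|H\cap K^{g}|\mid 2\). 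Hence
\[
H\cap K^{g}=B\simeq\mathbb{Z}_2 .
\]

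Finally, by Corollary~\ref{cor:J1の準正規部分群の分類} the subgroup \(\mathbb{Z}_2\) is non-pronormal in \(G\). Thus the meet of the two pronormal subgroups \(H\) and \(K^{g}\) is the non-pronormal subgroup \(B\), which shows that \(\PrN(G)\) is not closed under meets. The only point requiring care—the ``main obstacle''—is the guarantee of a common involution together with the verification that the intersection is \emph{exactly} \(\mathbb{Z}_2\) and not larger; this is handled uniformly by the coprimality of the odd parts, which collapses the intersection to order dividing \(2\), combined with the single conjugacy class of involutions supplied by the csc property. No delicate computation inside \(J_1\) is needed beyond reading off the relevant subgroup orders from Table~\ref{table:J1の部分群と準正規性}.
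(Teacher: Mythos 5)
Your proof is correct and follows essentially the same route as the paper's: the paper intersects $S_3\simeq D_6$ with $D_{10}$ rather than your $D_{10}$ with $D_{14}$, but the mechanism is identical (two non-abelian dihedral subgroups with coprime odd parts, the csc property to align involutions, and Corollary~\ref{cor:J1の準正規部分群の分類} to conclude $\mathbb{Z}_2$ is non-pronormal). Your explicit Lagrange/gcd step showing the intersection is exactly $B$ is a welcome bit of extra care that the paper leaves implicit.
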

\begin{proof}
Define $H:=A\rtimes B\simeq \mathbb{Z}_{3}\rtimes \mathbb{Z}_{2}\simeq D_{6}$ and $K:=X\rtimes Y\simeq \mathbb{Z}_{5}\rtimes \mathbb{Z}_{2}\simeq D_{10}$. Since $J_{1}$ is a csc-group by Lemma~\ref{lem:PSL,J1,SzはCSC群}, there exists $g\in G$ such that $K^{g}=X^{g}\rtimes B$. Then $H\cap K^{g}\simeq B\simeq \mathbb{Z}_{2}$. By Corollary \ref{cor:J1の準正規部分群の分類}, $D_{6}$ and $D_{10}$ are pronormal subgroups, but $\mathbb{Z}_{2}$ is non-pronormal. Therefore, $\PrN(G)$ is not closed under meets.
\end{proof}

\subsubsection{join}

\begin{prop}\label{prop:J1の準正規はjoinで閉じる}
$\PrN(G)$ is closed under joins.
\end{prop}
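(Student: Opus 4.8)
The plan is to argue by contraposition, exactly as in the corresponding statement for $\PSL(2,q)$ (Proposition~\ref{prop:PSLの準正規はjoinで閉じる}). By Corollary~\ref{cor:J1の準正規部分群の分類}, the complete list of non-pronormal subgroups of $G=J_{1}$ consists of exactly two isomorphism types, namely $\mathbb{Z}_{2}$ and $(\mathbb{Z}_{2})^{2}$. Hence, to prove that $\PrN(G)$ is closed under joins, it suffices to show that whenever $H$ and $K$ are pronormal, the join $\langle H,K\rangle$ is not isomorphic to either $\mathbb{Z}_{2}$ or $(\mathbb{Z}_{2})^{2}$.

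First I would dispose of the case $\langle H,K\rangle\simeq\mathbb{Z}_{2}$. Since $H,K\subset\langle H,K\rangle$, each of $H$ and $K$ is a subgroup of $\mathbb{Z}_{2}$, hence either trivial or equal to $\mathbb{Z}_{2}$. Because $\mathbb{Z}_{2}$ is non-pronormal by Corollary~\ref{cor:J1の準正規部分群の分類}, the pronormality hypothesis forces $H=K=\{\id\}$, whence $\langle H,K\rangle=\{\id\}$, contradicting $\langle H,K\rangle\simeq\mathbb{Z}_{2}$.

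Next I would treat $\langle H,K\rangle\simeq(\mathbb{Z}_{2})^{2}$. The proper nontrivial subgroups of $(\mathbb{Z}_{2})^{2}$ are precisely its three subgroups of order $2$, each isomorphic to $\mathbb{Z}_{2}$. Thus every subgroup of $(\mathbb{Z}_{2})^{2}$ is trivial, $\mathbb{Z}_{2}$, or $(\mathbb{Z}_{2})^{2}$ itself, and by Corollary~\ref{cor:J1の準正規部分群の分類} the latter two are non-pronormal. The pronormality of $H$ and $K$ therefore again forces both to be trivial, so $\langle H,K\rangle=\{\id\}$, contradicting $\langle H,K\rangle\simeq(\mathbb{Z}_{2})^{2}$.

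Combining the two cases, the join of any pair of pronormal subgroups avoids the non-pronormal list, so $\PrN(G)$ is closed under joins. I expect essentially no obstacle here: the decisive structural feature is that the only two non-pronormal subgroups $\mathbb{Z}_{2}$ and $(\mathbb{Z}_{2})^{2}$ have the property that each of their own pronormal subgroups is trivial, so no join of pronormal subgroups can reproduce them. The only point requiring mild care is the enumeration of subgroups of $(\mathbb{Z}_{2})^{2}$, which confirms that a pronormal subgroup contained in $(\mathbb{Z}_{2})^{2}$ must be the identity.
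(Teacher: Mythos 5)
Your proof is correct and follows essentially the same approach as the paper: the paper's (terser) argument also rules out the join landing on $\mathbb{Z}_{2}$ or $(\mathbb{Z}_{2})^{2}$ by observing that pronormal subgroups cannot sit nontrivially inside them, citing the order data of Table~\ref{table:J1の部分群と準正規性}. Your version simply spells out this containment argument explicitly, in the style of Proposition~\ref{prop:PSLの準正規はjoinで閉じる}, which is a fine elaboration rather than a different route.
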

\begin{proof}
By Corollary \ref{cor:J1の準正規部分群の分類}, the non-pronormal subgroups in $J_1$ are $\mathbb{Z}_{2}$ and $(\mathbb{Z}_{2})^{2}$. When we consider the join of pronormal subgroups, as evident from Table \ref{table:J1の部分群と準正規性}, the order relations ensure that the result cannot be isomorphic to $\mathbb{Z}_{2}$ or $(\mathbb{Z}_{2})^{2}$.
\end{proof}

\section{Pronormal Subgroups and Lattice Structure of $\bm{\Sz(2^{2n+1})}$}

In this section, we discuss the pronormal subgroups and their lattice structure for $\Sz(q)$, where $q:=2^{2n+1}$ and $2n+1$ is prime.

\subsection{Preliminaries for the Classification of Pronormal Subgroups in $\bm{\Sz(q)}$}

\begin{defi}\label{defi:Szにおけるnotation}
We use the following notation throughout this section.
{\allowdisplaybreaks
\begin{flalign*}
& \btri\; q := 2^{2n+1}, \theta := 2^{n+1} \text{ with } n \in \mathbb{Z}_{\ge 1}, &&\\
& \btri\; m_{+} := q+\theta+1, m_{-} := q-\theta+1, &&\\
& \btri\; G := \Sz(q) \text{: the Suzuki group of Lie type}, &&\\
& \btri\; E_{q} \simeq (\mathbb{Z}_{2})^{2n+1} \text{: the elementary abelian $2$-group of order $q$}, &&\\
& \btri\; S \text{: a Sylow $2$-subgroup of $G$, isomorphic to } E_{q}.E_{q}, &&\\
& \btri\; F \text{: the normalizer of $S$ in $G$, isomorphic to } S \rtimes \mathbb{Z}_{q-1}, &&\\
& \btri\; Z(S) \text{: the center of $S$, isomorphic to } E_{q}, &&\\
& \btri\; \PrN(G) := \{\,H \subset G \mid H \text{ is pronormal in } G\,\}, &&\\
& \btri\; \alpha \text{: a primitive element of } \mathrm{GF}(q), &&\\
& \btri\; 
U := \begin{psmallmatrix}
1 & 0 & 0 & 0 \\
1 & 1 & 0 & 0 \\
1 & 1 & 1 & 0 \\
1 & 0 & 1 & 1
\end{psmallmatrix},\quad
T(\alpha) := \begin{psmallmatrix}
\alpha & 0 & 0 & 0 \\
0 & \alpha^{\theta-1} & 0 & 0 \\
0 & 0 & \alpha^{-\theta+1} & 0 \\
0 & 0 & 0 & \alpha^{-1}
\end{psmallmatrix},\quad
w := \begin{psmallmatrix}
0 & 0 & 0 & 1 \\
0 & 0 & 1 & 0 \\
0 & 1 & 0 & 0 \\
1 & 0 & 0 & 0
\end{psmallmatrix},\quad
T := \langle T(\alpha) \rangle, &&\\
& \btri\;
M(a,b) := \begin{psmallmatrix}
1 & 0 & 0 & 0 \\
a & 1 & 0 & 0 \\
a^{\theta+1}+b & a^{\theta} & 1 & 0 \\
- a^{\theta+2} + b^{\theta} + a b & b & a & 1
\end{psmallmatrix}. && 
\end{flalign*}
}
  
\end{defi}

Regardless of whether $2n+1$ is prime or not, it is known that the subgroups of $\Sz(q)$ are completely classified as follows.
\begin{fac}[{\cite{wilson2009}}]\label{fac:リー型鈴木群の部分群分類定理}
The subgroups of $G$ can be classified as follows.
{\allowdisplaybreaks
\begin{flalign}
& \btri\; \text{$\mathrm{\Sz}(q_{0})$, where $q_{0}^{\,k}=q$ with $k$ an odd prime and $q_{0}>2$.} && \label{eq:Szq0}\\
& \btri\; \text{Solvable Frobenius subgroups $F\simeq (E_{q}.E_{q})\rtimes \mathbb{Z}_{q-1}$ of order $q^{2}(q-1)$ and their subgroups.} && \label{eq:Frobenius}\\
& \btri\; \text{Dihedral subgroups $D_{2(q-1)}$ of order $2(q-1)$ and their subgroups.} && \label{eq:Dihedral}\\
& \btri\; \text{Frobenius semidirect subgroups $\mathbb{Z}_{m_{\pm}}\rtimes \mathbb{Z}_{4}$ of order $4m_{\pm}$ and their subgroups.} && \label{eq:Semidirect}
\end{flalign}
}
In particular, every subgroup of $G$ is contained in a subgroup of one of these types.
\end{fac}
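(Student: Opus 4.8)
The statement is the classical determination of the subgroup structure of $\Sz(q)$ due to Suzuki, so the plan is not to treat it as a black box but to indicate how one re-derives it from the conjugacy and local structure of $G$ together with its $2$-transitive action on the Suzuki ovoid $\Omega$ of $q^{2}+1$ points. The arithmetic backbone is the factorization $|G|=q^{2}(q-1)(q^{2}+1)$ with $q^{2}+1=m_{+}m_{-}$, where $(q+1)^{2}-\theta^{2}=q^{2}+1$ since $\theta^{2}=2q$; a short computation using that $q-1$, $m_{+}$, $m_{-}$ are odd and $\gcd(\theta,m_{\pm})=1$ shows these three numbers are pairwise coprime. This coprimality forces the odd part of any element order to divide exactly one of $q-1$, $m_{+}$, $m_{-}$, which is what organizes the cyclic subgroups into three families.

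First I would pin down the local structure. Using the diagonal torus $T=\langle T(\alpha)\rangle$ and centralizer computations, one shows that $G$ has precisely three conjugacy classes of maximal cyclic subgroups of odd order, namely $A_{0}\cong\mathbb{Z}_{q-1}$, $A_{+}\cong\mathbb{Z}_{m_{+}}$, and $A_{-}\cong\mathbb{Z}_{m_{-}}$, and that each is self-centralizing. Computing their normalizers with the aid of the Weyl element $w$ yields $N_{G}(A_{0})\cong D_{2(q-1)}$ and $N_{G}(A_{\pm})\cong\mathbb{Z}_{m_{\pm}}\rtimes\mathbb{Z}_{4}$, the order-$4$ action being the Frobenius-type automorphism tied to $x\mapsto x^{\theta}$. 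In parallel, the $2$-local analysis identifies the Sylow $2$-subgroup $S\cong E_{q}.E_{q}$ of order $q^{2}$ and exponent $4$, its center $Z(S)\cong E_{q}$, and its normalizer $F=S\rtimes\mathbb{Z}_{q-1}$, which is exactly the Frobenius point-stabilizer of the action on $\Omega$; here one also records that every involution is conjugate into $Z(S)$ and has centralizer equal to a full Sylow $2$-subgroup.

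Next I would prove that $F$, $D_{2(q-1)}$, $\mathbb{Z}_{m_{\pm}}\rtimes\mathbb{Z}_{4}$, and the subfield copies $\Sz(q_{0})$ (with $q=q_{0}^{k}$, $k$ an odd prime, arising from $\mathrm{GF}(q_{0})\subset\mathrm{GF}(q)$) exhaust the maximal subgroups, after which the stated list follows because every proper subgroup lies in a maximal one and ``their subgroups'' are explicitly allowed. The strategy is a trichotomy on an arbitrary subgroup $H$. If $|H|$ is odd, then $H$ is cyclic and, by the coprimality step, lands in a single torus, hence in one of the torus normalizers. If $H$ contains a full Sylow $2$-subgroup, then since $N_{G}(S)=F$ is maximal one gets $H\le F$ or $H=G$. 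The remaining case, $|H|$ even but containing no full Sylow $2$-subgroup, is the substantive one: using the involution-centralizer fact and the action on $\Omega$ one shows such an $H$ is either contained in a torus normalizer or is a subfield subgroup $\Sz(q_{0})$.

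The main obstacle is precisely this last case: ruling out ``exotic'' even-order subgroups and recognizing the genuine subfield copies $\Sz(q_{0})$ requires the full force of Suzuki's geometric analysis of the ovoid (equivalently, the theory of Zassenhaus groups) together with a recognition theorem identifying a subgroup generated by root groups over a subfield as $\Sz(q_{0})$. Because this is a deep classical result, in the paper it is simply invoked through \cite{wilson2009}; the outline above records the route one would follow to reconstruct it, and everything downstream in \S5 only uses the resulting list of subgroup types.
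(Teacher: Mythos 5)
The paper does not actually prove this statement: it is labeled a Fact and cited directly from \cite{wilson2009}, i.e.\ Suzuki's classical classification of the subgroups of $\Sz(q)$ is used as a black box. Your outline is a faithful and accurate sketch of how that classical result is proved (the coprimality of $q-1$, $m_{+}$, $m_{-}$; the three self-centralizing cyclic tori with normalizers $D_{2(q-1)}$ and $\mathbb{Z}_{m_{\pm}}\rtimes\mathbb{Z}_{4}$; the Sylow $2$-structure and point stabilizer $F$; the maximal-subgroup trichotomy), and since you explicitly defer the decisive steps---the even-order case and the recognition of the subfield copies $\Sz(q_{0})$---to the same classical source, your treatment is in substance the same citation-based approach as the paper's.
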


\subsection{Classification of Pronormal Subgroups of $\bm{\Sz(2^{2n+1})}$}

\begin{prop}\label{prop:鈴木群においてp部分群と合成数位数dの巡回群以外は準正規}
Assume that \(2n+1\) is prime. Let \(d\) be a composite number that is not a prime power. Then every non-cyclic subgroup of \(G\) of order \(d\) is pronormal.
\end{prop}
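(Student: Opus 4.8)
The plan is to deduce pronormality from the NPr property rather than to construct conjugating elements by hand. By Theorem~\ref{thm:非冪零ならば準正規な単純群分類}, since $q=2^{2n+1}$ with $2n+1$ prime, the group $G=\Sz(q)$ is an NPr-group, so every non-nilpotent subgroup is automatically pronormal. Hence the entire proposition reduces to a single structural claim: a non-cyclic subgroup $H\subset G$ of order $d$, with $d$ composite and not a prime power, can never be nilpotent. Once this is established, pronormality is immediate from the NPr property.

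I would prove the structural claim by contradiction. Assume $H$ is nilpotent. Because $d$ is composite and not a prime power, it has at least two distinct prime divisors, and since at most one of them equals $2$, there is an odd prime $p\mid d$. A finite nilpotent group is the internal direct product of its Sylow subgroups, so $H=P\times Q$ with $P\in\mathrm{Syl}_{p}(H)$ and $Q$ the product of the remaining Sylow subgroups. As $p$ is odd, $P$ has odd order, hence is abelian (indeed cyclic, as explained below); fixing a nontrivial $y\in P$, both $P$ and $Q$ centralize $y$, so $H\subseteq C_{G}(y)$.

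The decisive input is the centralizer of a nontrivial odd-order element of $\Sz(q)$. The maximal tori of $G$ are cyclic of orders $q-1$, $m_{+}=q+\theta+1$, and $m_{-}=q-\theta+1$, all of which are odd because $q$ and $\theta$ are even. The centralizer of an involution is a Sylow $2$-subgroup of $G$ (a $2$-group of order $q^{2}$), so it contains no nontrivial element of odd order; equivalently, no involution centralizes $y$, and therefore $C_{G}(y)$ has odd order. Every odd-order subgroup of $G$ lies inside one of the cyclic tori above, by Fact~\ref{fac:リー型鈴木群の部分群分類定理} together with the standard structure of Suzuki groups (cf.~\cite{wilson2009}), so $C_{G}(y)$ is cyclic. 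Then $H\subseteq C_{G}(y)$ forces $H$ to be cyclic, contradicting the hypothesis. Thus $H$ is non-nilpotent and, by the NPr property, pronormal.

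I expect the only non-routine step to be the centralizer computation, namely verifying that $C_{G}(y)$ is a cyclic torus of odd order for every nontrivial odd-order $y$. My intended justification combines two standard facts about $\Sz(q)$: involution centralizers are $2$-groups, which forces $C_{G}(y)$ to have odd order, and the subgroup classification of Fact~\ref{fac:リー型鈴木群の部分群分類定理}, which confines odd-order subgroups to the cyclic maximal tori. If a fully self-contained derivation proves cumbersome, I would simply cite the description of the maximal tori and involution centralizers of Suzuki groups from \cite{wilson2009}.
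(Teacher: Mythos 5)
Your proof is correct, and it shares the paper's overall skeleton: both arguments reduce the proposition to showing the subgroups in question are non-nilpotent and then invoke Theorem~\ref{thm:非冪零ならば準正規な単純群分類}. But your non-nilpotency step takes a genuinely different route. The paper runs a case analysis over the four families of Fact~\ref{fac:リー型鈴木群の部分群分類定理}: inside each maximal subgroup it writes the candidate $H$ as a Frobenius semidirect product $A\rtimes B$ (Remark~\ref{rem:フロベニウスの部分群はフロベニウス}), checks that both factors are nontrivial, and concludes that $H$ is not the direct product of its Sylow subgroups. You instead argue by contradiction through centralizers: a nilpotent $H$ whose order has two distinct prime divisors centralizes a nontrivial odd-order element $y$, and since in $\Sz(q)$ involution centralizers are $2$-groups and odd-order subgroups are confined to the cyclic tori, $C_G(y)$ is cyclic of odd order, forcing $H$ to be cyclic. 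Your route is more uniform (no case analysis) and actually proves the stronger statement that every nilpotent subgroup of $\Sz(q)$ is a $2$-group or cyclic; its cost is that it imports two structural facts about $\Sz(q)$ that the paper never states---the CIT property of involution centralizers and the confinement of odd-order subgroups to cyclic tori---so these are genuine extra citations to \cite{wilson2009}. The second fact can in fact be recovered from Fact~\ref{fac:リー型鈴木群の部分群分類定理} by a routine Hall-type argument in each maximal subgroup (odd-order subgroups of $S\rtimes\mathbb{Z}_{q-1}$, of $D_{2(q-1)}$, and of $\mathbb{Z}_{m_{\pm}}\rtimes\mathbb{Z}_{4}$ all embed in the cyclic factor, and no $\Sz(q_{0})$ subgroup exists at all when $2n+1$ is prime), and spelling this out would make your proof self-contained relative to the paper's toolkit. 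Finally, tighten the phrase ``$P$ has odd order, hence is abelian'': that implication is false for abstract groups; either take $y\in Z(P)\setminus\{\id\}$, which exists because $P$ is a nontrivial $p$-group, or invoke the cyclicity of odd-order subgroups of $\Sz(q)$ before, not after, you use it.
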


\begin{proof}
By Fact \ref{fac:リー型鈴木群の部分群分類定理} every subgroup of \(G\) is contained in a subgroup of one of \eqref{eq:Szq0}, \eqref{eq:Frobenius}, \eqref{eq:Dihedral}, \eqref{eq:Semidirect}. By Theorem \ref{thm:非冪零ならば準正規な単純群分類}, under the hypothesis that \(2n+1\) is prime, it suffices to prove that each subgroup under consideration is non-nilpotent.

\emph{Case \eqref{eq:Frobenius}.} Let \(F\simeq (E_q.E_q)\rtimes \mathbb{Z}_{q-1}\) and let \(F'\subset F\) have order \(d\) and be non-cyclic. By Remark \ref{rem:フロベニウスの部分群はフロベニウス} there exist subgroups \(A\subset E_q.E_q\) and \(B\subset \mathbb{Z}_{q-1}\) with \(F'=A\rtimes B\). If \(A=\{\id\}\) then \(F'=B\) is cyclic and is excluded by the hypothesis. If \(B=\{\id\}\) then \(F'=A\) is a \(2\)-group and is excluded by the hypothesis. Thus \(A\) and \(B\) are both nontrivial, so \(F'=A\rtimes B\) with nontrivial action. In particular \(F'\) does not degenerate to a direct product. Here \(A\) is a \(2\)-group and \(B\) has odd order. A finite group is nilpotent if and only if it is the direct product of its Sylow subgroups, which does not occur for \(F'\). Hence \(F'\) is non-nilpotent. Therefore, by Theorem \ref{thm:非冪零ならば準正規な単純群分類}, every such \(F'\) is pronormal.

\emph{Case \eqref{eq:Dihedral}.} Let \(L=D_{2(q-1)}\) and let \(L'\subset L\) have order \(d\) and be non-cyclic. Every subgroup of \(L\) is either cyclic of odd order dividing \(q-1\) or dihedral \(D_{2s}\) of order \(2s\) with \(s\mid(q-1)\) and \(s\) odd. Cyclic subgroups are excluded by the hypothesis. A dihedral group is nilpotent if and only if it is a \(2\)-group, and since \(q-1\) is odd, the only \(2\)-group among subgroups of \(L\) is \(D_{2}\simeq \mathbb{Z}_{2}\), which is excluded by the hypothesis. Hence any non-cyclic \(L'\) is non-nilpotent. Therefore \(L'\) is pronormal by Theorem \ref{thm:非冪零ならば準正規な単純群分類}.

\emph{Case \eqref{eq:Semidirect}.} Let \(N=\mathbb{Z}_{m_{\pm}}\rtimes \mathbb{Z}_{4}\) and let \(N'\subset N\) have order \(d\) and be non-cyclic. Write \(A\subset \mathbb{Z}_{m_{\pm}}\) and \(B\subset \mathbb{Z}_{4}\) so that \(N'=A\rtimes B\) as in Remark \ref{rem:フロベニウスの部分群はフロベニウス}. If \(A=\{\id\}\), then \(N'=B\) is cyclic and is excluded. If \(B=\{\id\}\), then \(N'=A\) has odd order and is excluded. Thus \(A\) and \(B\) are both nontrivial and the action is nontrivial, so \(N'\) does not split as a direct product. Since \(A\) has odd order and \(B\) is a \(2\)-group, \(N'\) cannot be the direct product of its Sylow subgroups. Hence \(N'\) is non-nilpotent and therefore pronormal by Theorem \ref{thm:非冪零ならば準正規な単純群分類}.

\emph{Case \eqref{eq:Szq0}.} Let \(H\simeq \Sz(q_{0})\) with \(q_{0}^{\,k}=q\) and \(k\) an odd prime and \(q_{0}>2\). If such a subgroup occurs, then \(H\) is simple and therefore non-nilpotent. Moreover, by applying Fact \ref{fac:リー型鈴木群の部分群分類定理} inside \(H\), any non-cyclic subgroup of order \(d\) in \(H\) lies in a subgroup of one of \eqref{eq:Frobenius}, \eqref{eq:Dihedral}, \eqref{eq:Semidirect}, and the preceding cases show that it is non-nilpotent.

In all cases the subgroup under consideration is non-nilpotent. The conclusion follows from Theorem \ref{thm:非冪零ならば準正規な単純群分類}.
\end{proof}

Therefore, by Proposition \ref{prop:鈴木群においてp部分群と合成数位数dの巡回群以外は準正規}, when $2n+1$ is prime, it suffices to investigate the pronormality for the following two types of subgroups.

\begin{center}
(a): $2$-subgroups of $G$,\quad 
(b): cyclic groups $\mathbb{Z}_{d}$, where $d\mid m_{\pm}$ or $d\mid q-1$ \,($d$ odd).
\end{center}

\subsubsection{Discussion of Type (a)}

Let \(S\) be a Sylow \(2\)-subgroup of \(G\). Then \(S\simeq E_{q}.E_{q}\). Put \(N=N_{G}(S)=ST\simeq S\rtimes \mathbb{Z}_{q-1}\). The group \(N\) is a Frobenius group. We work in the standard matrix model of \(\Sz(q)\) where
\(G=\langle T(\alpha),U,w\rangle\),
\(S=\{M(a,b)\mid a,b\in \GF(q)\}\),
\(Z(S)=\{M(0,b)\mid b\in \GF(q)\}\),
and \(Z(S)\subset S\subset N\subset G\)
(see \cite{wilson2009}).

Let \(P\) be any \(2\)-subgroup of \(G\). By Lemma~\ref{lem:JohnRoseの補題} the subgroup \(P\) is pronormal in \(G\) if and only if for every Sylow \(2\)-subgroup \(S'\) of \(G\) with \(P\subset S'\) one has \(P\triangleleft N_{G}(S')\). If there exists a Sylow \(2\)-subgroup \(S'\) of \(G\) with \(P\subset S'\) and \(P\ntriangleleft N_{G}(S')\), then \(P\) is not pronormal in \(G\). We investigate \(2\)-subgroups \(P\) under this normality criterion.

\begin{lem}\label{lem:等式1}  
For any $\mu\in\GF(q)$, the equality $T(\alpha^{j})^{-1}M(0,\mu)T(\alpha^{j})=M(0,\mu (\alpha^{\theta})^{j})$ holds.
\end{lem}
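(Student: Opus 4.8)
The plan is to reduce the identity to a purely entrywise computation, exploiting that conjugation by the diagonal torus element $T(\alpha^{j})$ acts diagonally on matrix entries. First I would write out $M(0,\mu)$ explicitly by substituting $a=0$ into the definition of $M(a,b)$: the only nonzero off-diagonal entries are the $(3,1)$ entry $\mu$, the $(4,1)$ entry $\mu^{\theta}$, and the $(4,2)$ entry $\mu$. Next, recall that $T(\alpha^{j})=\operatorname{diag}\bigl(\alpha^{j},\alpha^{j(\theta-1)},\alpha^{j(1-\theta)},\alpha^{-j}\bigr)$ is diagonal, so for any diagonal $D=\operatorname{diag}(t_{1},t_{2},t_{3},t_{4})$ the conjugate $D^{-1}MD$ simply rescales the $(i,k)$ entry of $M$ by the factor $t_{k}/t_{i}$. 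This turns the whole lemma into three scalar identities, one per nonzero off-diagonal entry.

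Then I would compute the three scaling factors. For the $(3,1)$ entry the factor is $t_{1}/t_{3}=\alpha^{j}\cdot\alpha^{-j(1-\theta)}=\alpha^{j\theta}$, so $\mu$ becomes $\mu\alpha^{j\theta}=\mu(\alpha^{\theta})^{j}$; for the $(4,2)$ entry the factor is $t_{2}/t_{4}=\alpha^{j(\theta-1)}\cdot\alpha^{j}=\alpha^{j\theta}$, giving the same value $\mu(\alpha^{\theta})^{j}$. Setting $\nu:=\mu(\alpha^{\theta})^{j}$, these two already match the corresponding entries of the target $M(0,\nu)$. The remaining $(4,1)$ entry carries the real content: its scaling factor is $t_{1}/t_{4}=\alpha^{2j}$, so $\mu^{\theta}$ becomes $\mu^{\theta}\alpha^{2j}$, whereas the $(4,1)$ entry of $M(0,\nu)$ is $\nu^{\theta}=(\mu\alpha^{j\theta})^{\theta}=\mu^{\theta}\alpha^{j\theta^{2}}$.

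The main obstacle is therefore the single arithmetic verification $\alpha^{j\theta^{2}}=\alpha^{2j}$, equivalently $\alpha^{\theta^{2}}=\alpha^{2}$. I would settle this using the parameter relations: since $\theta=2^{n+1}$ and $q=2^{2n+1}$ one has $\theta^{2}=2^{2n+2}=2q$, and because $\alpha$ is a primitive element of $\GF(q)$ we have $\alpha^{q-1}=1$, whence $\alpha^{\theta^{2}}=\alpha^{2q}=\alpha^{2(q-1)+2}=(\alpha^{q-1})^{2}\alpha^{2}=\alpha^{2}$. This identifies the $(4,1)$ entry as $\mu^{\theta}\alpha^{2j}=\nu^{\theta}$, exactly as required. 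Having matched all three off-diagonal entries, together with the unchanged diagonal, I conclude $T(\alpha^{j})^{-1}M(0,\mu)T(\alpha^{j})=M(0,\nu)=M\bigl(0,\mu(\alpha^{\theta})^{j}\bigr)$, which is the claimed equality.
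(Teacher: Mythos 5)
Your proposal is correct and takes essentially the same route as the paper: the paper's proof is exactly a direct entrywise computation relying on the identity $\alpha^{\theta^{2}}=\alpha^{2}$, which you carry out explicitly (and you additionally justify that identity from $\theta^{2}=2q$ and $\alpha^{q-1}=1$, which the paper merely asserts). No gaps; your write-up is simply a fully detailed version of the paper's one-line argument.
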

\begin{proof}
Direct computation shows that both sides agree in all components, using the fact that $\alpha^{\theta^{2}}=\alpha^{2}$.
\end{proof}

\begin{lem}\label{lem:等式2}
Fix $i\in \{0,1,\dotsc,q-2\}$. Then the following equality holds:
\[\{T(\alpha^{j})^{-1}M(0,\alpha^{i})T(\alpha^{j})\mid j=0,1,\dotsc,q-2\} = \{M(0,\alpha^{j'})\mid j'=0,1,\dotsc,q-2\}=Z(S)\setminus \{\id\} .\]  
\end{lem}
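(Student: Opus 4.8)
The plan is to reduce the conjugation to a purely arithmetic statement about exponents of the primitive element $\alpha$ and then verify a single coprimality condition. First I would apply Lemma~\ref{lem:等式1} with $\mu=\alpha^{i}$, which rewrites each conjugate as
\[
T(\alpha^{j})^{-1}M(0,\alpha^{i})T(\alpha^{j})=M\bigl(0,\alpha^{i}(\alpha^{\theta})^{j}\bigr)=M\bigl(0,\alpha^{\,i+\theta j}\bigr).
\]
Since $\alpha$ is a primitive element of $\GF(q)$, one has $\alpha^{q-1}=1$, so the exponent $i+\theta j$ only matters modulo $q-1$. Thus the left-hand set equals $\{M(0,\alpha^{e})\mid e\equiv i+\theta j \pmod{q-1},\ j=0,\dots,q-2\}$, and the task reduces to showing that the residues $i+\theta j \pmod{q-1}$ exhaust $\{0,1,\dots,q-2\}$ as $j$ ranges over the same set.

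Next I would show that the affine map $j\mapsto i+\theta j$ is a bijection on $\mathbb{Z}/(q-1)\mathbb{Z}$. Translation by $i$ is always a bijection, so it suffices that multiplication by $\theta$ be invertible modulo $q-1$, i.e.\ that $\gcd(\theta,q-1)=1$. Here $\theta=2^{n+1}$ is a power of $2$, while $q-1=2^{2n+1}-1$ is odd; hence $\gcd(2^{n+1},2^{2n+1}-1)=1$ and the map is a bijection. Consequently the exponents $i+\theta j$ run over a complete residue system modulo $q-1$, so the left-hand set coincides with $\{M(0,\alpha^{j'})\mid j'=0,\dots,q-2\}$, which settles the first of the two claimed equalities.

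Finally I would identify this middle set with $Z(S)\setminus\{\id\}$. By definition $Z(S)=\{M(0,b)\mid b\in\GF(q)\}$, and evaluating $M(0,0)$ shows every off-diagonal entry vanishes, so $M(0,0)=\id$. As $\alpha$ is primitive, $\{\alpha^{j'}\mid j'=0,\dots,q-2\}$ is exactly $\GF(q)^{\times}$, whence $\{M(0,\alpha^{j'})\}=\{M(0,b)\mid b\neq 0\}=Z(S)\setminus\{\id\}$. I do not anticipate a serious obstacle here; the only point requiring care is the coprimality $\gcd(\theta,q-1)=1$, which drives the bijectivity and hence the surjectivity of the exponent map, together with the small check that $M(0,0)$ is the identity so that the \emph{punctured} center is recovered rather than all of $Z(S)$.
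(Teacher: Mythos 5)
Your proof is correct and takes essentially the same approach as the paper: both apply Lemma~\ref{lem:等式1} with $\mu=\alpha^{i}$ and then invoke $\gcd(\theta,q-1)=1$ to conclude that the conjugates exhaust $Z(S)\setminus\{\id\}$ (the paper phrases this as $\alpha^{\theta}$ being a primitive element, you as invertibility of multiplication by $\theta$ modulo $q-1$, which is the same fact). Your explicit justifications --- the computation $\gcd(2^{n+1},2^{2n+1}-1)=1$ and the check that $M(0,0)=\id$ so that only the punctured center is obtained --- are details the paper leaves implicit.
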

\begin{proof}
Since $\gcd(\theta,q-1)=1$, we have that $\alpha^{\theta}$ is also a primitive element. Applying Lemma \ref{lem:等式1} with $\mu:= \alpha^{i}$, we obtain
\[T(\alpha^{j})^{-1}M(0,\alpha^{i})T(\alpha^{j})=M(0,\alpha^{i} (\alpha^{\theta})^{j}).\]
As $j$ ranges over $\{0,1,\dotsc,q-2\}$, the expression $\alpha^{i} (\alpha^{\theta})^{j}$ ranges over all elements of $\GF(q)^{\times}$, which gives the desired equality.
\end{proof}

\begin{prop}\label{prop:任意のSTの正規部分群は中心を包含する}
Every nontrivial normal $2$-subgroup of \(ST\) contains \(Z(S)\).
\end{prop}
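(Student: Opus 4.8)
The plan is to exploit two structural facts: that $S$ is the \emph{normal} Sylow $2$-subgroup of $ST$, and that conjugation by $T$ already acts transitively on the nonidentity elements of $Z(S)$ by Lemma~\ref{lem:等式2}. The whole argument is then a two-line synthesis of these, with no new matrix computation required.

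First I would make the reduction $P\subset S$. Since $ST=N_{G}(S)\simeq S\rtimes \mathbb{Z}_{q-1}$ with $|S|=q^{2}$ a power of $2$ while $|\mathbb{Z}_{q-1}|=q-1$ is odd, the subgroup $S$ is the unique Sylow $2$-subgroup of $ST$. Hence every $2$-subgroup $P\subset ST$ lies inside $S$, and if in addition $P\trianglelefteq ST$ then a fortiori $P\trianglelefteq S$. Next I would invoke the standard fact that a nontrivial normal subgroup of a finite $p$-group meets the center nontrivially: letting $S$ act on $P$ by conjugation, the fixed-point set is exactly $P\cap Z(S)$, and since each nonsingleton orbit has even length while $|P|$ is even, counting modulo $2$ forces $|P\cap Z(S)|$ to be even, so $P\cap Z(S)\neq\{\id\}$. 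Thus $P$ contains a nonidentity central element, which by the description of $Z(S)$ in Definition~\ref{defi:Szにおけるnotation} has the form $M(0,\alpha^{i})$ for some $i$.

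Finally, because $P\trianglelefteq ST$ it is stable under conjugation by every $T(\alpha^{j})$. Applying Lemma~\ref{lem:等式2} to the element $M(0,\alpha^{i})\in P$, the set of its $T$-conjugates is precisely $Z(S)\setminus\{\id\}$; as all of these conjugates lie in $P$ by normality, we obtain $Z(S)\setminus\{\id\}\subset P$ and hence $Z(S)\subset P$, which is the claim.

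The argument is short, so the only genuinely delicate point is the opening reduction: one must verify that $P$ actually lands inside $S$, so that the $p$-group center lemma is applicable, and that normality of $P$ in $ST$ simultaneously supplies the $S$-normality used in the center step and the $T$-invariance used in the final sweep. All the arithmetic content—namely that $\alpha^{\theta}$ is primitive and that a single $T$-orbit exhausts $Z(S)\setminus\{\id\}$—is already contained in Lemmas~\ref{lem:等式1} and~\ref{lem:等式2}, so no further calculation in the matrix model is needed.
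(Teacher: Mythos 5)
Your proof is correct, and its second half is exactly the paper's mechanism: pick a nonidentity element $M(0,\alpha^{i})$ of $Z(S)$ inside the normal subgroup, sweep it around with the $T(\alpha^{j})$-conjugation of Lemma~\ref{lem:等式2}, and conclude $Z(S)\setminus\{\id\}\subset P$. The genuine difference is your opening step, and it is an improvement: the paper's proof literally begins ``let $L$ be a nontrivial normal subgroup of $ST$, and \emph{suppose} $M(0,\alpha^{i})\in L$ for some $i$,'' and never discharges that supposition, so as written it only proves the implication ``if $L$ meets $Z(S)$ nontrivially, then $Z(S)\subset L$.'' You close this gap: since $|T|=q-1$ is odd and $S\trianglelefteq ST$, the subgroup $S$ is the unique Sylow $2$-subgroup of $ST$, so any normal $2$-subgroup $P$ lies in $S$ and is normal in $S$; the standard class-equation argument (nonsingleton $S$-orbits on $P$ have even length, $|P|$ is even) then forces $P\cap Z(S)\neq\{\id\}$, which is precisely the element the $T$-sweep needs. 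One pedantic point to keep straight if this were written up: the fixed points of the $S$-conjugation action on $P$ are $P\cap C_{S}(S)=P\cap Z(S)$, as you say, and the identity already lies there, so evenness of $|P\cap Z(S)|$ gives a nonidentity fixed point. (Also note the paper has a typo, writing $T(\alpha^{j})\in S$ where it means $T(\alpha^{j})\in T$; your phrasing avoids this.) In short, your argument is the paper's argument made complete, at the cost of only the elementary $p$-group center lemma.
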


\begin{proof}
Write \(N:=ST\), \(T\simeq \mathbb{Z}_{q-1}\), and
\[
Z(S)=\{\,M(0,b)\mid b\in \GF(q)\,\}=\{\id\}\cup\{\,M(0,\alpha^{j})\mid j=0,1,\dots,q-2\,\}.
\]
Let $L$ be a nontrivial normal subgroup of $ST$, and suppose $M(0,\alpha^{i})\in L$ for some $i$. Since $L\triangleleft ST$, conjugation by elements $T(\alpha^{j})\in S$ gives
\[\{T(\alpha^{j})^{-1}M(0,\alpha^{i})T(\alpha^{j})\mid j=0,1,\dotsc,q-2\}\subset L.\]
By Lemma \ref{lem:等式2}, this set equals $Z(S)$, hence $Z(S)\subset L$.
\end{proof}

\begin{lem}\label{lem:等式3}
For any $\tau\in \GF(q)$, the equality 
$T(\alpha^{j})^{-1}M(\tau ,0)T(\alpha^{j})=M(\alpha^{j(2-\theta)}\tau ,0)$ holds. In particular, $\alpha^{(2-\theta)}$ is a primitive element of $\GF(q)$.
\end{lem}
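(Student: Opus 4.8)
The plan is to verify the matrix identity by a direct computation that exploits the diagonal structure of $T(\alpha^{j})$, and then to establish primitivity by a greatest-common-divisor argument. First I would note that we work over $\GF(q)$ in characteristic $2$, so every sign in $M(a,b)$ disappears and
\[
M(\tau,0)=\begin{psmallmatrix}
1 & 0 & 0 & 0 \\
\tau & 1 & 0 & 0 \\
\tau^{\theta+1} & \tau^{\theta} & 1 & 0 \\
\tau^{\theta+2} & 0 & \tau & 1
\end{psmallmatrix}.
\]
Since $T(\alpha^{j})$ is diagonal with entries $d_{1}=\alpha^{j}$, $d_{2}=\alpha^{j(\theta-1)}$, $d_{3}=\alpha^{j(1-\theta)}$, $d_{4}=\alpha^{-j}$, conjugation scales the $(i,k)$ entry of any matrix by $d_{k}/d_{i}$. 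Hence $T(\alpha^{j})^{-1}M(\tau,0)T(\alpha^{j})$ is again lower unitriangular with the same zero pattern, and it remains only to match its five nonzero off-diagonal entries against those of $M(\alpha^{j(2-\theta)}\tau,0)$.

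The key arithmetic input is the congruence $\theta^{2}\equiv 2\pmod{q-1}$, equivalently $\alpha^{\theta^{2}}=\alpha^{2}$; this follows from $\theta^{2}=2^{2n+2}=2q$ together with $\alpha^{q}=\alpha$, and it is exactly the relation already used in Lemma~\ref{lem:等式1}. Using it, each required exponent identity collapses after reduction modulo $q-1$. For the $(2,1)$ and $(4,3)$ entries the scaling factor is directly $d_{k}/d_{i}=\alpha^{j(2-\theta)}$, giving the linear coefficient $\alpha^{j(2-\theta)}\tau$. For the remaining entries one checks the corresponding powers of that coefficient: the $(3,1)$ entry needs $(2-\theta)(\theta+1)\equiv\theta$, the $(3,2)$ entry needs $(2-\theta)\theta\equiv 2(\theta-1)$, and the $(4,1)$ entry needs $(2-\theta)(\theta+2)=4-\theta^{2}\equiv 2$, each being immediate once $\theta^{2}$ is replaced by $2$. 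Matching these against the $\tau^{\theta+1}$, $\tau^{\theta}$, and $\tau^{\theta+2}$ entries of $M(\alpha^{j(2-\theta)}\tau,0)$ completes the identity.

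For the final assertion I would show that $\alpha^{2-\theta}$ has multiplicative order $q-1$. As $\alpha$ is primitive, $\alpha^{2-\theta}$ is primitive if and only if $\gcd(\theta-2,\,q-1)=1$, since an element and its inverse share the same order. I would factor $\theta-2=2(2^{n}-1)$ and observe that $q-1=2^{2n+1}-1$ is odd, so the factor $2$ is irrelevant and $\gcd(\theta-2,q-1)=\gcd(2^{n}-1,\,2^{2n+1}-1)$. The standard identity $\gcd(2^{a}-1,2^{b}-1)=2^{\gcd(a,b)}-1$ together with $\gcd(n,2n+1)=1$ then yields $\gcd(2^{n}-1,2^{2n+1}-1)=2^{1}-1=1$, so $\alpha^{2-\theta}$ is primitive. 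The only genuine obstacle is the exponent bookkeeping in the middle paragraph: every step is forced, but one must carry all five entries through the reduction $\theta^{2}\equiv 2$ consistently; the primitivity computation is then entirely routine.
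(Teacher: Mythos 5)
Your proposal is correct and follows essentially the same route as the paper's proof, which likewise verifies the matrix identity by direct computation using the relation $\alpha^{\theta^{2}}=\alpha^{2}$ and deduces primitivity from $\gcd(q-1,\,2-\theta)=1$. You simply carry out in full the entry-by-entry verification and the gcd computation (via $\gcd(2^{a}-1,2^{b}-1)=2^{\gcd(a,b)}-1$ and $\gcd(n,2n+1)=1$) that the paper leaves as assertions, and all of those details check out.
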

\begin{proof}
Direct computation using $\alpha^{\theta^{2}}=\alpha^{2}$ shows both sides are equal. Since $\gcd(q-1, 2-\theta)=1$, we have that $\alpha^{(2-\theta)}$ is primitive.
\end{proof}

\begin{prop}\label{prop:Lの正規部分群LがZより大きいならSを包含する}
Let $L$ be a normal $2$-subgroup of $ST$ with $Z(S)\subsetneq L$. Then $S\subset L$.
\end{prop}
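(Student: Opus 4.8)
The plan is to pass to the quotient $S/Z(S)$ and exploit the fact, recorded in Lemma~\ref{lem:等式3}, that $T$ acts on the ``$a$-coordinate'' by multiplication by the primitive element $\alpha^{2-\theta}$. First I would note a preliminary reduction: since $S\triangleleft ST$ and $q-1$ is odd, $S$ is the unique Sylow $2$-subgroup of $ST$, so every $2$-subgroup of $ST$ lies in $S$; in particular $L\subseteq S$. Hence it suffices to prove $|L|=q^{2}=|S|$.

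Next I would set up the projection $\pi\colon S\to(\GF(q),+)$, $M(a,b)\mapsto a$. This is a surjective homomorphism with kernel $Z(S)=\{M(0,b)\mid b\in\GF(q)\}$, so it identifies $S/Z(S)$ with the additive group $(\GF(q),+)\simeq E_{q}$. Writing $\bar{L}:=\pi(L)$, this is an additive subgroup, hence an $\mathbb{F}_{2}$-subspace, of $\GF(q)$. The hypothesis $Z(S)\subsetneq L$ guarantees that some $M(a,b)\in L$ has $a\neq0$, so $\bar{L}\neq\{0\}$.

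The crucial step is the $T$-invariance of $\bar{L}$. Because $L\triangleleft ST$, the subspace $\bar{L}$ is stable under the action of $T$ induced on $S/Z(S)$. Every coset of $Z(S)$ is represented by some $M(\tau,0)$, and by Lemma~\ref{lem:等式3} conjugation by $T(\alpha)$ sends $M(\tau,0)$ to $M(\alpha^{2-\theta}\tau,0)$; since conjugation preserves $Z(S)$, the induced action of $T(\alpha)$ on $\GF(q)$ is multiplication by $\alpha^{2-\theta}$. As $\alpha^{2-\theta}$ is primitive (again Lemma~\ref{lem:等式3}), its powers exhaust $\GF(q)^{\times}$, so the nonzero additive subspace $\bar{L}$, being stable under multiplication by a generator of $\GF(q)^{\times}$, is stable under multiplication by all of $\GF(q)$. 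Thus $\bar{L}$ is a nonzero $\GF(q)$-subspace of the one-dimensional $\GF(q)$-space $\GF(q)$, forcing $\bar{L}=\GF(q)$. Finally, $\pi(L)=\GF(q)$ together with $Z(S)\subseteq L=\ker(\pi)\cap L$ yields $|L|=|\GF(q)|\cdot|Z(S)|=q\cdot q=q^{2}$, whence $L=S$ and $S\subset L$.

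I expect the main obstacle to be the verification that $\pi$ is genuinely a homomorphism with kernel exactly $Z(S)$ and that the $T$-action descends cleanly to multiplication by $\alpha^{2-\theta}$ on the quotient. This is the linchpin of the argument and mirrors the structure of Proposition~\ref{prop:任意のSTの正規部分群は中心を包含する}, which uses Lemma~\ref{lem:等式2} to analyze the action on the center $Z(S)$ itself; here the same matrix model is applied one level up, on $S/Z(S)$, via Lemma~\ref{lem:等式3}. Once the descent is confirmed, the remaining irreducibility-type conclusion (an additive subspace fixed by a primitive scalar is a full $\GF(q)$-line) is routine.
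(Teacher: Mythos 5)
Your proof is correct and follows essentially the same route as the paper's: both arguments hinge on Lemma~\ref{lem:等式3} (primitivity of $\alpha^{2-\theta}$) to show that the ``$a$-coordinates'' of elements of $L$ sweep out all of $\GF(q)^{\times}$, and both use $Z(S)\subseteq L$ to account for the second coordinate. The only difference is packaging: the paper multiplies conjugates $M\bigl(\alpha^{j(2-\theta)}x,(\alpha^{\theta})^{j}y\bigr)$ by elements $M(0,\mu)\in Z(S)$ to exhibit every $M(u,v)$ explicitly, whereas you run the identical computation through the quotient $S/Z(S)\simeq(\GF(q),+)$ and finish with the order count $|L|=|\pi(L)|\cdot|Z(S)|$ — an equivalent, slightly more structural formulation.
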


\begin{proof}
Put $N=ST$ and $T\simeq \mathbb{Z}_{q-1}$. Since $|T|$ is odd and $S$ is a Sylow $2$-subgroup of $N$, we have $L\subset S$.

We have $Z(S)=\{M(0,b)\mid b\in \GF(q)\}$. The strict inclusion $Z(S)\subsetneq L$ implies the existence of an element $M(x,y)\in L$ with $x\ne 0$. Because $L\lhd N$, conjugation by every $T(\alpha^{j})\in T$ preserves $L$. By Lemmas~\ref{lem:等式1} and~\ref{lem:等式3} we obtain
\[
T(\alpha^{j})^{-1}M(0,y)T(\alpha^{j})=M\bigl(0,(\alpha^{\theta})^{j}y\bigr)\in L,
\qquad
T(\alpha^{j})^{-1}M(x,0)T(\alpha^{j})=M\bigl(\alpha^{j(2-\theta)}x,0\bigr)\in L,
\]
and therefore
\[
T(\alpha^{j})^{-1}M(x,y)T(\alpha^{j})
= M\bigl(\alpha^{j(2-\theta)}x,(\alpha^{\theta})^{j}y\bigr)\in L
\quad\text{for all } j\in\{0,1,\dots,q-2\}.
\]
By Lemma~\ref{lem:等式3}, $\alpha^{\,2-\theta}$ is primitive. Hence, as $j$ varies, the first coordinate $\alpha^{j(2-\theta)}x$ runs through all of $\GF(q)^{\times}$. In addition, $Z(S)\subset L$ gives $M(0,\mu)\in L$ for every $\mu\in \GF(q)$, and thus
\[
M\bigl(\alpha^{j(2-\theta)}x,(\alpha^{\theta})^{j}y\bigr)\,M(0,\mu)
= M\bigl(\alpha^{j(2-\theta)}x,(\alpha^{\theta})^{j}y+\mu\bigr)\in L.
\]
We conclude that all elements $M(u,v)$ with $u\in\GF(q)^{\times}$ and $v\in\GF(q)$ lie in $L$. Together with $Z(S)\subset L$ this yields $S\subset L$.
\end{proof}

\begin{prop}\label{prop:鈴木群の2部分群の準正規性判定完成}
A nontrivial $2$-subgroup of $G$ is pronormal if and only if it is isomorphic to $E_{q}$ or to $E_{q}.E_{q}$.
\end{prop}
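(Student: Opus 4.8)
The goal is to characterize exactly which nontrivial $2$-subgroups of $G=\Sz(q)$ are pronormal, and the claim is that these are precisely the subgroups isomorphic to $Z(S)\simeq E_q$ or to the full Sylow $2$-subgroup $S\simeq E_q.E_q$. The governing tool is Lemma~\ref{lem:JohnRoseの補題}: a $2$-subgroup $P\subset S$ is pronormal in $G$ if and only if $P\triangleleft N_G(S')$ for every Sylow $2$-subgroup $S'$ containing $P$. Since all Sylow $2$-subgroups are conjugate and $N_G(S)=ST$, by Lemma~\ref{lem:共役類の準正規性} it suffices to test normality of $P$ in $ST$ for a single fixed $S$, reducing the whole problem to computing the normal $2$-subgroups of the Frobenius group $ST$.

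The plan is to combine the two structural propositions already established. The key observation is that Propositions~\ref{prop:任意のSTの正規部分群は中心を包含する} and~\ref{prop:Lの正規部分群LがZより大きいならSを包含する} together pin down the lattice of normal $2$-subgroups of $ST$ completely: by Proposition~\ref{prop:任意のSTの正規部分群は中心を包含する} any nontrivial normal $2$-subgroup $L$ of $ST$ satisfies $Z(S)\subseteq L$, and by Proposition~\ref{prop:Lの正規部分群LがZより大きいならSを包含する} if this inclusion is strict then $S\subseteq L$, forcing $L=S$. Hence the only normal $2$-subgroups of $ST$ are $\{\id\}$, $Z(S)$, and $S$. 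First I would state this trichotomy as the engine of the proof. Then, for the forward direction, I would take an arbitrary nontrivial pronormal $2$-subgroup $P$; choosing any Sylow $2$-subgroup $S'\supseteq P$, the criterion gives $P\triangleleft ST'$, so $P$ lies in the list $\{Z(S'),S'\}$, yielding $P\simeq E_q$ or $P\simeq E_q.E_q$.

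For the converse I would verify that both $Z(S)$ and $S$ are genuinely pronormal. The case $P=S$ is immediate: a Sylow $2$-subgroup is pronormal by Lemma~\ref{lem:準正規部分群のクラス}. For $P=Z(S)$ I would argue via the criterion of Lemma~\ref{lem:JohnRoseの補題}: any Sylow $2$-subgroup containing $Z(S)$ must equal $S$ itself, because $Z(S)$ has order $q$ and is the center of the unique Sylow $2$-subgroup in which it sits (distinct Sylow $2$-subgroups of $\Sz(q)$ intersect trivially, so $Z(S)\subseteq S'$ forces $S'=S$); and $Z(S)\triangleleft ST$ since the center is characteristic in $S$ and $S\triangleleft ST$. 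Thus $Z(S)$ is normal in its unique containing normalizer, so it is pronormal.

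The main obstacle to watch is the containment-uniqueness claim for $Z(S)$ in the converse: one must justify that no Sylow $2$-subgroup other than $S$ contains $Z(S)$, i.e.\ that the trivial-intersection property of Sylow $2$-subgroups in $\Sz(q)$ (a Frobenius-type / $TI$ structure) applies. This is the one place where a purely formal reduction is insufficient and a genuine structural fact about $\Sz(q)$ is needed; I expect it to follow from the fact that $N_G(S)=ST$ is a Frobenius group with kernel $S$, so $S$ is a $TI$-subgroup, but this should be invoked explicitly rather than glossed over. Everything else is bookkeeping around the normal-subgroup trichotomy and the pronormality criterion.
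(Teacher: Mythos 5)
Your proposal follows essentially the same route as the paper: both rest on Rose's criterion (Lemma~\ref{lem:JohnRoseの補題}) combined with the trichotomy that the only normal $2$-subgroups of $ST$ are $\{\id\}$, $Z(S)$, and $S$, which is exactly the content of Propositions~\ref{prop:任意のSTの正規部分群は中心を包含する} and~\ref{prop:Lの正規部分群LがZより大きいならSを包含する}. In fact you are \emph{more} careful than the paper on the converse: the paper only verifies $Z(S)\triangleleft N_{G}(S)$ for the single Sylow subgroup $S$, whereas Rose's lemma requires $Z(S)\triangleleft N_{G}(S')$ for \emph{every} Sylow $2$-subgroup $S'$ containing $Z(S)$. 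You correctly identify this as the one point needing a genuine structural input about $\Sz(q)$.

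However, the justification you sketch for that input is not valid as stated. The inference ``$N_{G}(S)$ is a Frobenius group with kernel $S$, hence $S$ is a TI-subgroup of $G$'' is false in general: in $\PSL(2,11)$ the Sylow $2$-subgroup is a Klein four group $V_{4}$ with $N_{G}(V_{4})\simeq A_{4}$ a Frobenius group with kernel $V_{4}$, yet each involution lies in three distinct Sylow $2$-subgroups, so these Sylow subgroups are far from TI. The TI property of Sylow $2$-subgroups of $\Sz(q)$ is true, but it is a specific theorem about Suzuki groups (coming from the $2$-transitive action on the ovoid) and must be cited as such rather than derived from the Frobenius structure of the normalizer. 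Alternatively, you can close the gap using a fact the paper itself invokes later (Case (3-ii) of Proposition~\ref{prop:Szの準正規はjoinで閉じている}): every involution of a Sylow $2$-subgroup $S'$ of $\Sz(q)$ is central in $S'$. Since $Z(S)$ is elementary abelian, $Z(S)\subset S'$ forces $Z(S)\subset Z(S')$, and then $|Z(S)|=|Z(S')|=q$ gives $Z(S)=Z(S')\triangleleft N_{G}(S')$. With either repair your argument is complete; the remaining steps (the forward direction via the trichotomy applied to any $N_{G}(S')$, and pronormality of $S$ as a Sylow subgroup via Lemma~\ref{lem:準正規部分群のクラス}) are sound.
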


\begin{proof}
Let \(S\) be a Sylow \(2\)-subgroup of \(G\) and write \(N:=N_{G}(S)=ST\). By Propositions~\ref{prop:任意のSTの正規部分群は中心を包含する} and \ref{prop:Lの正規部分群LがZより大きいならSを包含する} the nontrivial normal \(2\)-subgroups of \(N\) are exactly \(Z(S)\) and \(S\).

Let \(P\) be a nontrivial \(2\)-subgroup of \(G\). Choose a Sylow \(2\)-subgroup \(S'\) of \(G\) with \(P\subset S'\). By Lemma~\ref{lem:JohnRoseの補題} the subgroup \(P\) is pronormal in \(G\) if and only if \(P\triangleleft N_{G}(S')\). In particular, when \(S'=S\) we obtain \(P\triangleleft N_{G}(S)\). The description of normal \(2\)-subgroups of \(N_{G}(S)\) then forces \(P=Z(S)\) or \(P=S\).

Conversely \(Z(S)\triangleleft N_{G}(S)\) and \(S\triangleleft N_{G}(S)\). Hence Lemma~\ref{lem:JohnRoseの補題} gives that \(Z(S)\) and \(S\) are pronormal in \(G\). This completes the proof.
\end{proof}

\subsubsection{Discussion of Type (b)}

\begin{prop}\label{prop:Sz-奇数dの巡回部分群は準正規}
Let \(G=\Sz(q)\) where \(q=2^{2m+1}\).
Let \(d\ge 1\) be odd and assume that \(d\mid m_{\pm}\) or \(d\mid(q-1)\).
Then every subgroup of \(G\) that is isomorphic to \(\mathbb{Z}_{d}\) is pronormal in \(G\).
\end{prop}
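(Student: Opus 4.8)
The plan is to reproduce, in the Suzuki setting, the argument that proved the $\PSL(2,q)$ analogue in Proposition~\ref{prop:PSLで、奇数位数巡回部分群は準正規}. The case $d=1$ is trivial, so assume $d>1$. Since $d$ is odd, every prime divisor $p'$ of $d$ is an odd prime, and the whole proof rests on one structural input: each Sylow $p'$-subgroup of $G$ is cyclic. Granting this, Corollary~\ref{cor:巡回シローp部分群の部分群も準正規} immediately gives that every $p'$-subgroup of $G$ is pronormal.

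To establish cyclicity I would start from the order factorization $|G| = q^{2}(q-1)(q^{2}+1) = q^{2}(q-1)m_{+}m_{-}$, using $q^{2}+1 = (q-\theta+1)(q+\theta+1) = m_{-}m_{+}$. The key arithmetic fact is that the three odd numbers $q-1$, $m_{+}$, $m_{-}$ are pairwise coprime: indeed $\gcd(q-1,q^{2}+1)=\gcd(q-1,2)=1$ because $q-1$ is odd and $q^{2}+1\equiv 2\pmod{q-1}$, while $\gcd(m_{+},m_{-})\mid\gcd(m_{+},2\theta)=\gcd(m_{+},\theta)=1$ since $m_{+}$ is odd and $\theta\mid q$ forces $m_{+}\equiv 1\pmod\theta$. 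Consequently any odd prime $p'\mid|G|$ divides exactly one of $q-1$, $m_{+}$, $m_{-}$, so $v_{p'}(|G|)$ equals its valuation in that single factor. That factor is the order of a cyclic torus, namely $\mathbb{Z}_{q-1}$ sitting inside a dihedral subgroup of type~\eqref{eq:Dihedral}, or $\mathbb{Z}_{m_{\pm}}$ sitting inside a Frobenius subgroup of type~\eqref{eq:Semidirect} in the classification of Fact~\ref{fac:リー型鈴木群の部分群分類定理}, and this torus therefore already contains a full Sylow $p'$-subgroup. Hence each odd-order Sylow subgroup is cyclic.

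With cyclicity in hand the remainder is formal. Fix a cyclic $H_{d}\subset G$ with $H_{d}\simeq\mathbb{Z}_{d}$ and write $d=\prod_{i} p_{i}^{\,a_{i}}$ with the $p_{i}$ distinct odd primes; let $P_{i}\subset H_{d}$ be the unique subgroup of order $p_{i}^{\,a_{i}}$, so that $H_{d}=\prod_{i} P_{i}$ with pairwise commuting factors. Each $P_{i}$ lies in a cyclic Sylow $p_{i}$-subgroup and is therefore pronormal by Corollary~\ref{cor:巡回シローp部分群の部分群も準正規}. Applying Lemma~\ref{lem:直積しても準正規性は保存} repeatedly to the commuting pronormal factors shows that $H_{d}$ is pronormal. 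Finally, since $\Sz(q)$ is a csc-group by Lemma~\ref{lem:PSL,J1,SzはCSC群}, Corollary~\ref{cor:PSL,J1,Szの巡回部分群の準正規性は代表1つを見ればok} upgrades this single representative to the conclusion that every subgroup isomorphic to $\mathbb{Z}_{d}$ is pronormal.

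The main obstacle, and really the only nontrivial point, is the first step: pinning down that all odd-order Sylow subgroups are cyclic and locating them inside the tori of Fact~\ref{fac:リー型鈴木群の部分群分類定理}. This reduces to the pairwise coprimality of $q-1$, $m_{+}$, $m_{-}$; once that is recorded, the coprime decomposition of $d$ together with the product lemma makes the rest routine, exactly as in the $\PSL(2,q)$ case.
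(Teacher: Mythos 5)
Your proposal is correct and follows essentially the same route as the paper's proof: decompose $H\simeq\mathbb{Z}_{d}$ into its Sylow factors $P_{i}$, invoke Corollary~\ref{cor:巡回シローp部分群の部分群も準正規} for each $P_{i}$ via the cyclicity of the odd Sylow subgroups, combine with Lemma~\ref{lem:直積しても準正規性は保存}, and upgrade to all copies of $\mathbb{Z}_{d}$ by Corollary~\ref{cor:PSL,J1,Szの巡回部分群の準正規性は代表1つを見ればok}. The only difference is that the paper simply asserts the cyclicity of the odd-order Sylow subgroups of $\Sz(q)$, whereas you supply a correct justification via the pairwise coprimality of $q-1$, $m_{+}$, $m_{-}$ and the tori in Fact~\ref{fac:リー型鈴木群の部分群分類定理}, which is a welcome addition rather than a deviation.
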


\begin{proof}
Fix a cyclic subgroup \(H\subset G\) with \(|H|=d\).
Write the prime factorization \(d=\prod_{i}p_{i}^{\,a_{i}}\) with distinct odd primes \(p_{i}\).
Inside \(H\) there is a unique subgroup \(P_{i}\subset H\) of order \(p_{i}^{\,a_{i}}\) for each \(i\). 
Hence \(H=\prod_{i}P_{i}\) as an internal direct product.

For each \(i\), the prime \(p_{i}\) divides \(m_{\pm}\) or \(q-1\).
In \(\Sz(q)\) the Sylow \(p_{i}\)-subgroup is cyclic for every such odd prime \(p_{i}\).
By Corollary~\ref{cor:巡回シローp部分群の部分群も準正規}, every subgroup of a cyclic Sylow \(p_{i}\)-subgroup is pronormal in \(G\).
Therefore each \(P_{i}\) is pronormal in \(G\).

By Lemma~\ref{lem:直積しても準正規性は保存}, the product of commuting pronormal subgroups is again pronormal.
Applying this to the family \(\{P_{i}\}_{i}\) yields that \(H=\prod_{i}P_{i}\) is pronormal in \(G\).

By Corollary~\ref{cor:PSL,J1,Szの巡回部分群の準正規性は代表1つを見ればok}, once one representative \(H\simeq\mathbb{Z}_{d}\) is pronormal, every subgroup of \(G\) that is isomorphic to \(\mathbb{Z}_{d}\) is pronormal.
This proves the claim.
\end{proof}

Combining Propositions \ref{prop:鈴木群の2部分群の準正規性判定完成} and \ref{prop:Sz-奇数dの巡回部分群は準正規} we obtain the following corollary.

\begin{cor}\label{cor:鈴木群の準正規部分群分類定理}
Let \(G=\Sz(q)\) with \(q=2^{2n+1}\) and assume that \(2n+1\) is prime. Then every subgroup of \(G\) is pronormal unless it is a \(2\)-subgroup that is different from \(E_{q}\) and \(E_{q}.E_{q}\). Conversely, every \(2\)-subgroup other than \(E_{q}\) and \(E_{q}.E_{q}\) is not pronormal.
\end{cor}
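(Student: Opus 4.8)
The plan is to assemble the classification directly from the case analysis already completed, namely Fact~\ref{fac:リー型鈴木群の部分群分類定理} together with Propositions~\ref{prop:鈴木群においてp部分群と合成数位数dの巡回群以外は準正規}, \ref{prop:鈴木群の2部分群の準正規性判定完成}, and \ref{prop:Sz-奇数dの巡回部分群は準正規}. First I would recall from Fact~\ref{fac:リー型鈴木群の部分群分類定理} that every subgroup of $G$ sits inside one of the four listed types, and then invoke Proposition~\ref{prop:鈴木群においてp部分群と合成数位数dの巡回群以外は準正規} to dispose at once of every non-cyclic subgroup whose order is composite and not a prime power: each such subgroup is pronormal. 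This reduces the entire question to the two residual families (a) and (b) singled out after that proposition.

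Next I would verify that families (a) and (b) really exhaust everything left untreated. The key observation is that in $\Sz(q)$ every element order divides one of $4$, $q-1$, or $m_{\pm}=q\pm\theta+1$, and the three odd numbers $q-1$, $m_{+}$, $m_{-}$ are pairwise coprime while $2$-elements have order at most $4$. Hence no element, and therefore no cyclic subgroup, can have order divisible simultaneously by $2$ and by an odd prime: every cyclic subgroup is either a $2$-group or of odd order. Combined with the fact that for each odd prime $p\mid |G|$ the Sylow $p$-subgroup is cyclic (so every odd prime-power subgroup is already cyclic), this shows that the subgroups not dispatched by Proposition~\ref{prop:鈴木群においてp部分群と合成数位数dの巡回群以外は準正規} are precisely the $2$-subgroups of family (a) together with the odd cyclic subgroups $\mathbb{Z}_{d}$ with $d\mid q-1$ or $d\mid m_{\pm}$ of family (b).

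Finally I would feed these two families into the structural results. Proposition~\ref{prop:鈴木群の2部分群の準正規性判定完成} gives that a nontrivial $2$-subgroup is pronormal exactly when it is isomorphic to $E_{q}$ or $E_{q}.E_{q}$, so every other $2$-subgroup is non-pronormal; and Proposition~\ref{prop:Sz-奇数dの巡回部分群は準正規} gives that every odd cyclic subgroup is pronormal. Putting the three inputs together, every subgroup of $G$ is pronormal except the $2$-subgroups distinct from $E_{q}$ and $E_{q}.E_{q}$, which is precisely the assertion, in both directions.

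The only genuine content beyond bookkeeping lies in the exhaustiveness check of the second step, namely confirming that the reduction to families (a) and (b) omits nothing; this rests on the confinement of element orders just described. The substantive work, the normal-subgroup analysis of $ST$ underlying Proposition~\ref{prop:鈴木群の2部分群の準正規性判定完成} and the commuting-factor argument underlying Proposition~\ref{prop:Sz-奇数dの巡回部分群は準正規}, is already in hand, so I expect no further obstacle and anticipate the corollary to follow by straightforward assembly.
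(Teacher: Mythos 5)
Your proposal is correct and follows essentially the same route as the paper, which obtains the corollary by combining Proposition~\ref{prop:鈴木群の2部分群の準正規性判定完成} and Proposition~\ref{prop:Sz-奇数dの巡回部分群は準正規} after the reduction to families (a) and (b) given by Proposition~\ref{prop:鈴木群においてp部分群と合成数位数dの巡回群以外は準正規}. Your explicit exhaustiveness check (no element of $\Sz(q)$ has order divisible by both $2$ and an odd prime, and Sylow subgroups for odd primes are cyclic) is a point the paper leaves implicit, and it is a sound and worthwhile addition.
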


\subsection{Investigation of Whether $\bm{\PrN(\Sz(2^{2n+1}))}$ Form a Lattice}

\subsubsection{meet}

By Lemma~\ref{lem:PSL,J1,SzはCSC群}, $G$ is a csc-group, hence all cyclic subgroups of the same order are conjugate.

\begin{prop}\label{prop:Szの準正規はmeetで閉じない}
$\PrN(G)$ is not closed under meets.  
\end{prop}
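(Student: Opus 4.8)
The plan is to follow the same template used for $\PSL(2,q)$ in Proposition~\ref{prop:PSLの準正規はmeetで閉じない} and for $J_1$ in Proposition~\ref{prop:J1の準正規はmeetで閉じない}: I would exhibit two pronormal subgroups of $G$ whose intersection is a non-pronormal $2$-subgroup. By Corollary~\ref{cor:鈴木群の準正規部分群分類定理}, the subgroup $\mathbb{Z}_2$ of order $2$ is non-pronormal, since it is a $2$-subgroup distinct from $E_q$ and $E_q.E_q$, whereas any subgroup that is not a $2$-subgroup is pronormal. So it suffices to produce two pronormal, non-$2$-subgroups meeting exactly in a common involution.

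For the two subgroups I would take $H\simeq D_{2(q-1)}$ from the dihedral family~\eqref{eq:Dihedral} and $K\simeq \mathbb{Z}_{m_+}\rtimes \mathbb{Z}_4$ from the Frobenius family~\eqref{eq:Semidirect}, both of which exist in $G$ by Fact~\ref{fac:リー型鈴木群の部分群分類定理}. Neither is a $2$-subgroup, since $H$ has order $2(q-1)$ with $q-1$ odd and $K$ has order $4m_+$ with $m_+$ odd; hence both are pronormal by Corollary~\ref{cor:鈴木群の準正規部分群分類定理} (equivalently, both are non-nilpotent and pronormal by Proposition~\ref{prop:鈴木群においてp部分群と合成数位数dの巡回群以外は準正規}). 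Each contains an involution: a reflection $t\in H$, and the unique involution $u$ of the cyclic subgroup $\mathbb{Z}_4\subset K$.

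Next I would align the involutions using the csc property. Since $G$ is a csc-group (Lemma~\ref{lem:PSL,J1,SzはCSC群}), all subgroups of order $2$ are conjugate, so there is $g\in G$ with $\langle u\rangle^{g}=\langle t\rangle$, whence $t\in H\cap K^{g}$. To pin the intersection down to $\langle t\rangle$ I would bound its order. The key arithmetic is that $q-1$ and $m_+=q+\theta+1$ are coprime odd integers: one has $m_+m_-=(q+1)^2-\theta^2=q^2+1$ (using $\theta^2=2q$), and $\gcd(q-1,\,q^2+1)=\gcd(q-1,2)=1$ because $q-1$ is odd, so $q-1$ is coprime to $m_+$. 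Consequently $\gcd\bigl(|H|,\,|K^{g}|\bigr)=\gcd\bigl(2(q-1),\,4m_+\bigr)=2$, whence $\lvert H\cap K^{g}\rvert\mid 2$. Since $\langle t\rangle\subset H\cap K^{g}$, this forces $H\cap K^{g}=\langle t\rangle\simeq\mathbb{Z}_2$.

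Finally, $H$ and $K^{g}$ are pronormal (the latter by Lemma~\ref{lem:共役類の準正規性}), while their meet $\mathbb{Z}_2$ is non-pronormal, so $\PrN(G)$ is not closed under meets. The only real content is the bookkeeping in the third paragraph: establishing the coprimality $\gcd(q-1,m_+)=1$ and tracking the $2$-parts of the two orders so that the intersection is forced to have order exactly $2$. Everything else is a direct transcription of the previously established pattern, so I expect no serious obstacle; one could equally well intersect $\mathbb{Z}_{m_+}\rtimes\mathbb{Z}_4$ with a conjugate of $\mathbb{Z}_{m_-}\rtimes\mathbb{Z}_4$ along a common $\mathbb{Z}_4$ to obtain the non-pronormal $\mathbb{Z}_4$ instead.
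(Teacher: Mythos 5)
Your proposal is correct and follows essentially the same template as the paper: the paper takes the two Frobenius subgroups $\mathbb{Z}_{m_{+}}\rtimes\mathbb{Z}_{4}$ and $\mathbb{Z}_{m_{-}}\rtimes\mathbb{Z}_{4}$, uses the csc property to conjugate them onto a common $\mathbb{Z}_{4}$, and obtains the non-pronormal meet $\mathbb{Z}_{4}$ --- precisely the variant you mention in your final sentence. Your pairing of $D_{2(q-1)}$ with a conjugate of $\mathbb{Z}_{m_{+}}\rtimes\mathbb{Z}_{4}$ meeting in $\mathbb{Z}_{2}$ works equally well, and your coprimality computation $\gcd(q-1,m_{+})=1$ actually makes explicit the order argument that pins down the intersection, a step the paper's proof leaves implicit.
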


\begin{proof}
Let $H_{+}:=A\rtimes B\simeq \mathbb{Z}_{m_{+}}\rtimes \mathbb{Z}_{4}$ and $H_{-}:=X\rtimes Y\simeq \mathbb{Z}_{m_{-}}\rtimes \mathbb{Z}_{4}$. By Lemma~\ref{lem:PSL,J1,SzはCSC群}, $G$ is a csc-group, so there exists $g\in G$ such that $H_{-}^{g}=X^{g}\rtimes B$. Then $H_{+}\cap H_{-}^{g}= B\simeq \mathbb{Z}_{4}$. By Corollary \ref{cor:鈴木群の準正規部分群分類定理}, $\mathbb{Z}_{m_{\pm}}\rtimes \mathbb{Z}_{4}$ are pronormal subgroups, but $\mathbb{Z}_{4}$ is non-pronormal. Therefore $\PrN(G)$ is not closed under meets.
\end{proof}

\subsubsection{join}

\begin{lem}\label{lem:非冪零群のjoinもまた非冪零群}
Let $H,K\subset G$ be any non-nilpotent subgroups. Then $\langle H,K\rangle$ is also a non-nilpotent subgroup.
\end{lem}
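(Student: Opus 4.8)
The plan is to reduce the statement to the standard hereditary property of nilpotency, namely that \emph{every subgroup of a nilpotent group is itself nilpotent}. This single fact does all the work, because the join $\langle H,K\rangle$ contains each of its generating subgroups. In particular there is nothing special about $G=\Sz(q)$ here; the assertion holds verbatim in any group, and I would phrase the argument so as to make that clear.

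The key steps, in order, are as follows. First I would note that by the very definition of the join, both $H\subseteq\langle H,K\rangle$ and $K\subseteq\langle H,K\rangle$. Second, I would argue by contraposition: suppose toward a contradiction that $\langle H,K\rangle$ is nilpotent. Then, since nilpotency passes to subgroups, the subgroup $H\subseteq\langle H,K\rangle$ would be nilpotent as well. This contradicts the hypothesis that $H$ is non-nilpotent. (One could invoke $K$ in place of $H$ equally well, so it suffices that at least one of the two generating subgroups be non-nilpotent.) Hence $\langle H,K\rangle$ cannot be nilpotent, which is exactly the claim.

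There is essentially no obstacle to overcome and no calculation to perform: the whole content is the observation that the join sits above $H$ together with the inheritance of nilpotency by subgroups. The only point worth stating carefully is that the hypothesis is used through $H$ (or $K$) being a subgroup of the join, rather than through any finer structural feature of the Suzuki group. I would therefore keep the write-up to a couple of lines, citing the subgroup-inheritance of nilpotency as the one external fact, and emphasizing that the result is needed later only to ensure that joins of non-nilpotent subgroups stay within the non-nilpotent (hence, by Theorem~\ref{thm:非冪零ならば準正規な単純群分類}, pronormal) part of the subgroup family.
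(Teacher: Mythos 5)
Your proof is correct and is essentially identical to the paper's own argument: both proceed by supposing $\langle H,K\rangle$ nilpotent and deriving a contradiction from the fact that nilpotency is inherited by subgroups, since $H,K\subseteq\langle H,K\rangle$. Your additional observation that one non-nilpotent generator already suffices is a valid minor strengthening, but the core reasoning matches the paper exactly.
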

\begin{proof}
Suppose for contradiction that $H,K\subset G$ are non-nilpotent subgroups but $\langle H,K \rangle$ is nilpotent. Since subgroups of nilpotent groups are also nilpotent, both $H$ and $K$ would be nilpotent, contradicting our assumption. Therefore, $\langle H,K \rangle$ is non-nilpotent.
\end{proof}

\begin{prop}\label{prop:Szの準正規はjoinで閉じている}
Assume $2n+1$ is prime. Then $\PrN(G)$ is closed under join.
\end{prop}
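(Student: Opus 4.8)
The plan is to reuse the classification in Corollary~\ref{cor:鈴木群の準正規部分群分類定理}, which tells us that the only non-pronormal subgroups of $G$ are the $2$-subgroups different from $E_{q}$ and $E_{q}.E_{q}$. Hence it suffices to show that if $H,K\in\PrN(G)$, then $\langle H,K\rangle$ is never such a ``bad'' $2$-subgroup. I would split the argument according to whether or not $H$ and $K$ are both $2$-groups. First I would dispose of the case where at least one of $H,K$ has order divisible by an odd prime: then $|\langle H,K\rangle|$ is also divisible by that odd prime, so the join is not a $2$-group at all and therefore cannot lie in the non-pronormal list. This is the routine direction and follows purely from order considerations together with Lagrange's theorem.

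The remaining case is when both $H$ and $K$ are $2$-subgroups of $G$. By Corollary~\ref{cor:鈴木群の準正規部分群分類定理} (equivalently Proposition~\ref{prop:鈴木群の2部分群の準正規性判定完成}), the only pronormal $2$-subgroups are $E_{q}$ and $E_{q}.E_{q}$, so $H,K\in\{E_{q},\,E_{q}.E_{q}\}$. I would then argue that the join $\langle H,K\rangle$ is again one of these two groups, hence still pronormal. If either $H$ or $K$ equals a full Sylow $2$-subgroup $S\simeq E_{q}.E_{q}$, then $\langle H,K\rangle$ contains $S$; since $S$ is a maximal $2$-subgroup (a Sylow subgroup) and $\langle H,K\rangle$ is itself a $2$-group containing it, we get $\langle H,K\rangle=S\simeq E_{q}.E_{q}$, which is pronormal. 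The delicate subcase is $H\simeq K\simeq E_{q}$: here I would use that every copy of $E_{q}$ is the center $Z(S')$ of some Sylow $2$-subgroup $S'$. If $H$ and $K$ are the center of the \emph{same} Sylow subgroup, then $H=K$ and the join is again $E_{q}$; if they are centers of \emph{distinct} Sylow $2$-subgroups $S_{1}\ne S_{2}$, then $\langle H,K\rangle$ is a $2$-subgroup strictly containing the abelian group $H=Z(S_{1})$, so by the classification it must be $E_{q}.E_{q}$, which is pronormal.

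The main obstacle I anticipate is the last subcase: ruling out that the join of two distinct copies of $E_{q}$ could be a non-pronormal $2$-subgroup strictly between $E_{q}$ and $E_{q}.E_{q}$ in size. The cleanest way around this is to observe that in $\Sz(q)$ the elementary abelian group $E_{q}\simeq(\mathbb{Z}_{2})^{2n+1}$ is already maximal among the \emph{elementary abelian} $2$-subgroups, and that any $2$-subgroup of $G$ is contained in a Sylow $2$-subgroup $S'\simeq E_{q}.E_{q}$; thus $\langle H,K\rangle\subset S'$ and, being a proper elementary-abelian-exceeding subgroup, the only options compatible with Proposition~\ref{prop:鈴木群の2部分群の準正規性判定完成} force it to equal $Z(S')\simeq E_{q}$ or $S'\simeq E_{q}.E_{q}$. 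Either way the join is pronormal, so no join of pronormal subgroups ever lands in the non-pronormal list, and $\PrN(G)$ is closed under join.
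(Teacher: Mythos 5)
There is a genuine gap, and it sits exactly in the case you yourself flagged as delicate. When $H\ne K$ are two distinct copies of $E_{q}$, you assert that $\langle H,K\rangle$ ``is a $2$-subgroup strictly containing $H=Z(S_{1})$'' and conclude from the classification that it must be $E_{q}.E_{q}$. Both steps fail. First, the join of two distinct copies of $E_{q}$ is \emph{never} a $2$-group: if $\langle H,K\rangle$ were a $2$-group it would lie in some Sylow $2$-subgroup $S'$, and since in a Suzuki Sylow $2$-subgroup every involution is central, any elementary abelian subgroup of order $q$ inside $S'$ equals $Z(S')$; this would force $H=Z(S')=K$, contradicting $H\ne K$. (Indeed the paper proves the much stronger statement that $\langle H,K\rangle=G$, by assuming $\langle H,K\rangle$ lies in one of the four types of subgroups from Fact~\ref{fac:リー型鈴木群の部分群分類定理} and deriving a contradiction in each case.) Second, even granting your false premise, the inference ``a $2$-subgroup strictly containing $E_{q}$ must be $E_{q}.E_{q}$ by the classification'' is a non sequitur: Proposition~\ref{prop:鈴木群の2部分群の準正規性判定完成} classifies which $2$-subgroups are \emph{pronormal}, not which $2$-subgroups \emph{exist}; there are plenty of $2$-subgroups strictly between $Z(S')$ and $S'$ (of orders $2q,4q,\dots$), they are simply non-pronormal, so your argument cannot rule out that the join lands on one of them. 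The same unjustified assumption (``$\langle H,K\rangle$ is itself a $2$-group'') appears in your Sylow subcase, though there the conclusion survives trivially, since a join that is not a $2$-group is automatically pronormal by Corollary~\ref{cor:鈴木群の準正規部分群分類定理}.

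The fix is short and close to what you almost wrote: argue by contraposition rather than asserting the join is a $2$-group. If $\langle H,K\rangle$ is not a $2$-group, it is pronormal by Corollary~\ref{cor:鈴木群の準正規部分群分類定理} and you are done; if it is a $2$-group, then $H$ and $K$ lie in a common Sylow $2$-subgroup $S'$, whence $H=Z(S')=K$ by the centrality of involutions in $S'$, so the join is $E_{q}$ itself. This yields closure under join without ever identifying the join of two distinct copies of $E_{q}$, and is in fact lighter than the paper's proof, which also handles the non-nilpotent pronormal subgroups first (via the observation that a join of non-nilpotent subgroups is non-nilpotent) and then runs the four-way maximal-subgroup analysis to show that distinct copies of $E_{q}$ generate all of $G$.
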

\begin{proof}
When $2n+1$ is prime, all non-nilpotent subgroups of $\Sz(q)$ are pronormal subgroups. By Lemma \ref{lem:非冪零群のjoinもまた非冪零群}, if $H$ and $K$ are non-nilpotent pronormal subgroups, then $\langle H,K \rangle$ is also non-nilpotent and hence pronormal. Therefore, for non-nilpotent pronormal subgroups, $\PrN(G)$ is always closed under join. By Corollary \ref{cor:鈴木群の準正規部分群分類定理}, the nilpotent pronormal subgroups are precisely:
\[(a): E_{q},\, E_{q}.E_{q}\quad (b): \mathbb{Z}_{d}\text{ where }d\mid m_{\pm}, q-1.  \]

Note that the non-pronormal subgroups in $G$ are exactly the $2$-subgroups other than those in (a). We now consider what happens to $\langle H,K\rangle$ for various combinations of pronormal subgroups $H,K\subset G$.

\textbf{Case (1):} When one of $H,K$ is $E_{q}.E_{q}$ and the other is arbitrary, we have $|\langle H,K \rangle|\geq q^{2}$. Such subgroups are pronormal unless they are $2$-subgroups, but any $2$-subgroup of order at least $q^{2}$ must be a Sylow $2$-subgroup, which is pronormal.

\textbf{Case (2):} When one of $H,K$ is $\mathbb{Z}_{d}$ with $d\mid m_{\pm}, q-1$, then $\langle H,K \rangle$ contains elements of odd order, so it is not a $2$-subgroup and hence is pronormal.

\textbf{Case (3):} When one of $H,K$ is $E_{q}$, if the other is a pronormal subgroup not isomorphic to $E_{q}$, then $\langle H,K \rangle$ has order at least $q^{2}$ or contains elements of odd order, making it pronormal. Thus we need to consider whether the join of two distinct subgroups isomorphic to $E_{q}$ is again pronormal.

Let $M:=\langle H,K \rangle$ where $H,K \simeq E_{q}$ with $H \neq K$. Note that $Z(S)\simeq H,K$ and $|H|=|K|=q$. By Fact \ref{fac:リー型鈴木群の部分群分類定理}, the proper maximal subgroups of $\Sz(q)$ are classified into four types. We prove by contradiction that $M = \Sz(q)$. Assume $M \subsetneq \Sz(q)$. Then $M$ must be contained in one of the four types of maximal subgroups.

\textbf{Case (3-i):}
Assume for a contradiction that $M\subset \Sz(q_{0})$ with $q=q_{0}^{k}$, where $k\ge 3$ is an odd prime.
Write $q_{0}=2^{\,2m+1}$ with $m\ge 1$. Then
\[
|\Sz(q_{0})| \;=\; q_{0}^{2}\,(q_{0}^{2}+1)\,(q_{0}-1),
\]
and both $q_{0}^{2}+1$ and $q_{0}-1$ are odd.
We have
\[
v_{2}\bigl(|\Sz(q_{0})|\bigr)
= v_{2}(q_{0}^{2})
= 2\,v_{2}(q_{0})
= 2(2m+1).
\]
Since $H\simeq E_{q}$ has order $|H|=q=q_{0}^{k}$, it follows that
\[
v_{2}(|H|) \;=\; v_{2}(q_{0}^{k}) \;=\; k\,v_{2}(q_{0})
\;=\; k(2m+1) \;>\; 2(2m+1)
\;=\; v_{2}\bigl(|\Sz(q_{0})|\bigr),
\]
because $k\ge 3$. Hence
$v_{2}(|H|)\le v_{2}(|\Sz(q_{0})|)$ must hold a contradiction.
Therefore $M\not\subset \Sz(q_{0})$.

\textbf{Case (3-ii).}
Fix a subgroup $N \subset G$ with $N = S \rtimes C$,
where $S$ is a Sylow $2$-subgroup of $G$ and $C \simeq \mathbb{Z}_{q-1}$.
Assume $M \subset N$ and set $S_{0}:=S$ and keep $C$ as above, so
$M \subset S_{0} \rtimes C \subset G$ with $S_{0},C$ concrete in $G$.
Since $H$ and $K$ are $2$-groups of order $q$, we have $H,K \subset S_{0}$.
In the Sylow \(2\)-subgroup \(S_{0}\) of \(G\) every element of order \(2\) is central.
The subgroup generated by the elements of order \(2\) is an elementary abelian subgroup of order \(q\), which is the center \(Z(S_{0})\).
Hence \(Z(S_{0})\) is characteristic in \(S_{0}\) and \(|Z(S_{0})|=q\).
Any subgroup of order \(q\) in \(S_{0}\) is generated by elements of order \(2\), and it equals \(Z(S_{0})\).
Since \(|H|=|K|=q\) and \(H,K \subset S_{0}\), we obtain \(H=Z(S_{0})=K\), which contradicts \(H \ne K\).

\textbf{Case (3-iii):} Suppose $M\subset D_{2(q-1)}$. Then $H\subset D_{2(q-1)}$, which requires $q \mid 2(q-1)$. Since $q-1$ is odd, this means $q \mid 2$. But $q=2^{2n+1} \geq 8$, which is a contradiction.

\textbf{Case (3-iv):} Suppose $M\subset \mathbb{Z}_{m_{\pm}}\rtimes \mathbb{Z}_{4}$. Then $H\subset \mathbb{Z}_{m_{\pm}}\rtimes \mathbb{Z}_{4}$, which requires $q \mid 4m_{\pm}$. Since $m_{\pm}$ is odd, this means $q \mid 4$. But $q \geq 8$, which is a contradiction.

Therefore, $M$ cannot be contained in any proper maximal subgroup of $G$, which contradicts our assumption that $M \subsetneq G$. Hence $M = G$. Since $G$ itself is trivially a pronormal subgroup, we conclude that when $2n+1$ is prime, $\PrN(G)$ is always closed under join.
\end{proof}

\subsection{An alternative meet for pronormal subgroups}

\begin{remark}\label{rem:束まとめ}
For the values of \(q\) considered here, the description of the family \(\PrN(G)\) is common to \(\PSL(2,q)\), \(J_{1}\), and \(\Sz(q)\).
By Propositions \ref{prop:PSLの準正規はmeetで閉じない}, \ref{prop:J1の準正規はmeetで閉じない}, \ref{prop:Szの準正規はmeetで閉じない}, the intersection of two pronormal subgroups need not be pronormal.
By Propositions \ref{prop:PSLの準正規はjoinで閉じる}, \ref{prop:J1の準正規はjoinで閉じる}, \ref{prop:Szの準正規はjoinで閉じている}, the join of two pronormal subgroups is always pronormal.

Define a canonical meet on \(\PrN(G)\) as follows.
For \(H,K\in\PrN(G)\) let \(H\wedge_{\PrN}K\) be the unique largest pronormal subgroup contained in \(H\cap K\).
Since \(G\) is finite, the set \(\{L\in\PrN(G)\mid L\subset H\cap K\}\) has maximal elements.
If two distinct maximal elements \(A\) and \(B\) existed, then \(A\vee B\) would be pronormal and would still lie in \(H\cap K\), which contradicts maximality.
Hence \(H\wedge_{\PrN}K\) is well defined and gives the greatest lower bound of \(H\) and \(K\) inside \(\PrN(G)\). Together with the subgroup join, this operation turns \(\PrN(G)\) into a lattice.
\end{remark}

\section*{Acknowledgements}
We are deeply indebted to Professor Koichi Betsumiya for his constant guidance and supervision throughout this research.

We also thank Wei-Liang Sun for valuable advice, especially for pointing us to relevant prior work on pronormal subgroups.

Their support and encouragement were indispensable to the completion of this work.

\end{document}